\documentclass{article}

\usepackage{graphicx} 
\usepackage{geometry}
\usepackage[utf8]{inputenc}
\usepackage{amsmath,amsthm,amssymb} 
\usepackage{mathtools}
\usepackage{mathrsfs}
\usepackage{stmaryrd}
\usepackage{enumerate} 
\usepackage{tikz-cd}
\usepackage{bm}
\usepackage{hyperref}
\usepackage[toc,page]{appendix}

\geometry{margin=90pt}

\usepackage[
maxbibnames=99,
style=alphabetic,
]{biblatex}
\addbibresource{ref.bib}
\DeclareFieldFormat[article]{citetitle}{#1}
\DeclareFieldFormat[article]{title}{#1}
\renewbibmacro{in:}{
 \ifentrytype{article}{}{\printtext{\bibstring{in}\intitlepunct}}} 

\newtheorem{theorem}{Theorem}[section]
\newtheorem{lemma}[theorem]{Lemma}
\newtheorem{prop}[theorem]{Proposition}
\newtheorem{cor}[theorem]{Corollary}

\theoremstyle{definition}
\newtheorem{definition}[theorem]{Definition}

\newtheorem{remark}[theorem]{Remark}
\newtheorem{assumption}[theorem]{Assumption}

\DeclareMathOperator\AJ{\operatorname{AJ}}
\DeclareMathOperator\Aut{\operatorname{Aut}}
\DeclareMathOperator\bal{\operatorname{bal}}
\DeclareMathOperator\can{\operatorname{can}}
\DeclareMathOperator\CH{\operatorname{CH}}
\DeclareMathOperator\cris{\operatorname{cris}}
\DeclareMathOperator\dR{\operatorname{dR}}
\DeclareMathOperator\Fil{\operatorname{Fil}}
\DeclareMathOperator\Frac{\operatorname{Frac}}

\DeclareMathOperator\Gal{\operatorname{Gal}}
\DeclareMathOperator\GL{\operatorname{GL}}
\DeclareMathOperator\Hom{\operatorname{Hom}}
\DeclareMathOperator\id{\operatorname{id}}
\DeclareMathOperator\im{\operatorname{Im}}

\DeclareMathOperator\ord{\operatorname{ord}}
\DeclareMathOperator\para{\operatorname{par}}

\DeclareMathOperator\Sel{\operatorname{Sel}}
\DeclareMathOperator\SL{\operatorname{SL}}

\DeclareMathOperator\Spec{\operatorname{Spec}}
\DeclareMathOperator\Spf{\operatorname{Spf}}
\DeclareMathOperator\Sym{\operatorname{Sym}}

\newcommand{\cl}{\operatorname{cl}}
\newcommand{\Drig}{\operatorname{\mathbf{D}^\dagger_{\rig}}}
\newcommand{\et}{\operatorname{\mathrm{\acute e t}}}

\newcommand{\Hdg}{\operatorname{Hdg}}
\newcommand{\lcm}{\operatorname{\ell \textrm{cm}}}
\newcommand{\phig}{\operatorname{\varphi, \Gamma}}
\newcommand{\pr}{\operatorname{pr}}
\newcommand{\rig}{\operatorname{rig}}

\newcommand{\bff}{\mathbf{f}}
\newcommand{\bfg}{\mathbf{g}}
\newcommand{\bfh}{\mathbf{h}}

\newcommand{\calC}{\mathcal{C}}

\newcommand{\calH}{\mathcal{H}}

\newcommand{\calL}{\mathcal{L}}
\newcommand{\calO}{\mathcal{O}}
\newcommand{\calU}{\mathcal{U}}
\newcommand{\calV}{\mathcal{V}}
\newcommand{\calW}{\mathcal{W}}
\newcommand{\calX}{\mathcal{X}}

\newcommand\bbC{\mathbb{C}}

\newcommand\bbN{\mathbb{N}}
\newcommand\bbQ{\mathbb{Q}}

\newcommand\bbW{\mathbb{W}}
\newcommand\bbZ{\mathbb{Z}}

\newcommand\frakw{\mathfrak{w}}
\newcommand\frakX{\mathfrak{X}}

\newcommand\scrE{\mathscr{E}}
\newcommand\scrL{\mathscr{L}}
\newcommand\scrO{\mathscr{O}}
\newcommand\scrR{\mathscr{R}}

\usepackage{graphicx} 

\title{Triple product $p$-adic $L$-functions for finite slope families and a $p$-adic Gross--Zagier formula}
\author{Ting-Han Huang}
\date{\today}

\begin{document}

\maketitle
\begin{abstract}
    In this paper, we generalize two results of H. Darmon and V. Rotger on triple product $p$-adic $L$-functions attached to Hida families to finite slope families.
    We first prove a $p$-adic Gross--Zagier formula, then demonstrate an application to a special case of the equivariant Birch and Swinnerton-Dyer conjecture for supersingular elliptic curves.
    
\end{abstract}

\section{Introduction}

In \cite{DR}, a $p$-adic Gross--Zagier formula is proved for the triple product $p$-adic $L$-functions for ordinary families of modular forms.
More precisely, the formula relates
\begin{enumerate}
    \item[-] the special values of the Garrett--Rankin triple product $p$-adic $L$-function at classical points lying outside the region of interpolation, to
    \item[-] the $p$-adic Abel--Jacobi images of \textit{generalized diagonal cycles} in the product of three Kugo--Sato varieties, evaluated at certain differentials.
\end{enumerate}

The fist part of this paper aims to generalize the above result to triple product $p$-adic $L$-functions for finite slope families of modular forms.

Let 
$f \in S_k (N_f, \chi_f)$, $g \in S_\ell (N_g, \chi_g)$, $h \in S_m (N_h, \chi_h)$ be a triple of normalized primitive cuspidal eigenforms and set $N := \ell \mathrm{cm}(N_f, N_g, N_h)$.
We will assume that $\chi_f \cdot \chi_g \cdot \chi_h =1$, which implies that $k + \ell+ m$ is even.

The triple $(k, \ell, m)$ is said to be \textit{balanced} if the largest number is strictly less than the sum of the other two.
A triple that is not balanced is called \textit{unbalanced}, and the largest number in an unbalanced triple will be called the \textit{dominant weight}.

For such a triple $(f, g, h)$, one has the (complex) Garrett--Rankin $L$-function $L(f, g, h; s)$.
It can be viewed as the $L$-function attached to the tensor product 
$$V(f, g, h) = V(f) \otimes V(g) \otimes V(h)$$
of the (compatible systems of) $p$-adic Galois representations associated with $f, g$ and $h$ respectively.
This $L$-function admits a functional equation relating its values at $s$ and $k + \ell +m -2-s$.
As a consequence, the order of vanishing of $L(f, g, h; s)$ at the central point $c :=\frac{k + \ell +m -2}{2}$ is determined by the root number $\varepsilon \in \{ \pm 1 \}$ of the functional equation.
The root number $\varepsilon$ can be further written as a product $\prod_{ v \mid N \infty} \varepsilon_v$ of local root numbers $\varepsilon_v \in \{ \pm 1 \}$.
We will assume the following assumption throughout this article as in \cite{DR}. 

\noindent \textbf{Assumption} H: The local root numbers $\varepsilon_v =+1$ for all finite primes $v \mid N$.

As indicated in \cite[\S~1]{DR}, this condition is satisfied in a broad collection of settings.
Under Assumption H, $\varepsilon = \varepsilon_\infty$ depends only on the triple of weights $(k, \ell, m)$:
$$\varepsilon_\infty = \begin{cases}
    -1 \quad \textrm{if } (k, \ell, m) \textrm{ is balanced;}\\
    +1 \quad \textrm{if } (k, \ell, m) \textrm{ is unbalanced.}
\end{cases}$$
In particular, the $L$-function necessarily vanishes at the central point $c$ when $(k, \ell, m)$ is balanced.
In this situation, instead of studying the values of $L(f, g, h; s)$, one usually studies the values of its derivatives or its associated $p$-adic $L$-functions and expects them to encode arithmetic information.

Choose a prime $p \nmid N$ such that $f, g, h$ are of (sufficiently small) finite slope at $p$.
We may find ($p$-adic) Coleman families $\mathbf{f}$, $\mathbf{g}$, $\mathbf{h}$ passing through $f, g, h$,  defined over certain weight spaces $\Omega_f, \Omega_g, \Omega_h$, respectively.
In the language of rigid analytic geometry, the space $\Omega_\bullet$ for $\bullet \in \{ f, g, h\}$ is a finite rigid analytic cover over a subset of the weight space $\calW := \Hom_{\mathrm{cts}}(\bbZ_p^\times, \bbC_p^\times)$ which always contains the integers $\bbZ$ via the identification $k \mapsto (\gamma \mapsto \gamma^k)$.

When $\bff$ is of slope $a \in \bbQ_{\geq 0}$,
a point $x \in \Omega_f$ is said to be classical if its image in $\Omega$, denoted by $\kappa(x)$, belongs to $\bbZ_{> a+1}$.
We will often identify a classical weight $x$ with $\kappa(x) \in \bbZ$ by an abuse of notation.
We denote the set of classical points in $\Omega_f$ by $\Omega_{f, \mathrm{cl}}$.
For almost all $x \in \Omega_{f, \mathrm{cl}}$, the specialization of $\mathbf{f}$ at $x \in \Omega_{f, \mathrm{cl}}$, denoted by $\mathbf{f}_x$ (or simply $f_x$), is a normalized eigenform of weight $\kappa(x)$ on $\Gamma_1(N, p) := \Gamma_1(N) \cap \Gamma_0(p)$.
For all but finitely many such $x$, $\bff_x$ is a (finite slope) $p$-stabilization of a normalized eigenform of the same weight on $\Gamma_1(N)$, denoted by $f_x^0$.

When $\mathbf{f}$, $\mathbf{g}$, $\mathbf{h}$ are Hida (ordinary) families, one can view them as maps
$$\mathbf{f}: \Omega_f \rightarrow \bbC_p \llbracket q \rrbracket, \  \mathbf{g}: \Omega_g \rightarrow \bbC_p \llbracket q \rrbracket, \ \mathbf{h}: \Omega_h \rightarrow \bbC_p \llbracket q \rrbracket.$$
The triple product $p$-adic $L$-functions for Hida families in \cite{DR} is defined as 
$$\scrL^f_p(\bff, \bfg, \bfh): = \frac{(\bff^*, e_{\ord}(\theta^\bullet \bfg^{[p]} \times \bfh))}{(\bff^*, \bff^*)} ,$$
where $\bff^*$ is the Atkin--Lehner involution of $\bff$, $e_{\ord}$ is Hida's ordinary projector, $\theta = q \frac{d}{dq}$ is Serre's operator, $\bfg^{[p]}:= (1-VU) \bfg$ is the $p$-depletion of $\bfg$, and $( \phantom{e}, \phantom{e} )$ is the Petersson product.

For finite slope families, one needs a different approach.
A more geometric construction of triple product $p$-adic $L$-functions for finite slope families is developed by F. Andreatta and A. Iovita in \cite{tripleL}. 
Section \S \ref{section: L-function} is dedicated to recall their construction.

Roughly speaking, one wants to define the triple product $p$-adic $L$-function as the ratio
$$\mathscr{L}^f_p (\mathbf{f}, \mathbf{g}, \mathbf{h}) := \frac{( \mathbf{f}^*, \nabla^\bullet \mathbf{g}^{[p]} \times \mathbf{h} ) }{(\mathbf{f}^*,\mathbf{f}^*)}.$$
In order for this expression to make sense, one needs to $p$-adically iterate the Gauss--Manin connection $\nabla$.
When $\mathbf{f}$, $\mathbf{g}, \mathbf{h}$ are Hida families, as we saw in above definition, one may replace $\nabla$ by $\theta = q \frac{d}{dq}$ and works only on the $q$-expansions.
Then there is a straightforward way to iterate $\theta$ to $p$-adic powers, provided that the form it acts on lies in the kernel of $U$ (e.g. the $p$-depletion $\bfg^{[p]}$).
The case for $\nabla$ is more complicated and requires a careful study on the convergence.
However, we would like to refer the detailed computations to\cite{tripleL}.


The triple product $p$-adic $L$-function $\mathscr{L}^f_p (\mathbf{f}, \mathbf{g}, \mathbf{h})$ can be viewed as a three-variable function
$$\mathscr{L}^f_p (\mathbf{f}, \mathbf{g}, \mathbf{h}): \Omega_f \times \Omega_g \times \Omega_h \rightarrow \bbC_p,$$
whose value at $(x, y, z) \in \Omega_f \times \Omega_g \times \Omega_h$ is 
$$\frac{( \mathbf{f}_x^*, \nabla^{-t} \mathbf{g}_y^{[p]} \times \mathbf{h}_z ) }{(\mathbf{f}_x^*,\mathbf{f}_x^*)},$$
where $t \in \Omega$ satisfies $\kappa(x) = \kappa(y) + \kappa(z) -2t$.
As one can observe, the form $\bff$ plays a distinct role in $\mathscr{L}^f_p$. 
What is not so obvious is that $\mathscr{L}^f_p (\mathbf{f}, \mathbf{g}, \mathbf{h})$ is symmetric, up to a $\pm$ sign, in the last two components.
Similarly, one can define $\mathscr{L}^g_p = \mathscr{L}^g_p(\bfg, \bff, \bfh)$ and $\mathscr{L}^h_p = \mathscr{L}^h_p (\bfh, \bff, \bfg)$.

Set $\Sigma = \Omega_f \times \Omega_g \times \Omega_h$ and $\Sigma_{\cl} = \Omega_{f, \cl} \times \Omega_{g, \cl} \times \Omega_{h, \cl}$.
The set $\Sigma_{\cl}$ can be naturally divided into four disjoint subsets:
\begin{align*}
    \Sigma_{\bal} &:= \{ (x, y, z) \in \Sigma_{\cl} \mid (\kappa(x), \kappa(y), \kappa(z)) \textrm{ is balanced} \}; \\
    \Sigma_{f} &:= \{ (x, y, z) \in \Sigma_{\cl} \mid \kappa(x) \geq \kappa(y) + \kappa(z) \}; \\
    \Sigma_{g} &:= \{ (x, y, z) \in \Sigma_{\cl} \mid \kappa(y) \geq \kappa(x) + \kappa(z) \}; \\
    \Sigma_{h} &:= \{ (x, y, z) \in \Sigma_{\cl} \mid \kappa(z) \geq \kappa(x) + \kappa(y) \}.
\end{align*}
The $p$-adic $L$-function $\mathscr{L}^f_p$ on $\Sigma_f$ interpolates the \textit{square roots} of \textit{central critical values} of the classical $L$-functions $L(f_x^0, g_y^0, h_z^0; s)$, which is a combined result of the Ichino formula, \cite[Theorem~4.7]{DR} and \cite[Corollary~5.13]{tripleL}.

Points in $\Sigma_{\bal}$, on the other hand, lie outside this region of interpolation, and we are interested in the values of $\mathscr{L}^f_p$ at these points.
By definition, the value of $\mathscr{L}^f_p (\mathbf{f}, \mathbf{g}, \mathbf{h})$ at a balanced point $(x, y, z)$ involves a `negative' power $\nabla^{-t}$, which is originally defined as a $p$-adic limit.
On the other hand, the interpolating property of $\nabla$ implies the relation $\nabla^s \circ \nabla^{-t} = \nabla^{-t+s}$.
In other words, this negative power also serves as a $t$-th anti-derivative with respect to $\nabla$.
Hence, we may expect to express these special values via a $p$-adic  integration theory.
A suitable candidate is Coleman $p$-adic integration or its generalization, Besser's finite polynomial cohomology.
Furthermore, these $p$-adic integration theories allow us to compute $p$-adic Abel--Jacobi maps, which will be discussed bellow.

Let $E \rightarrow X = X_1(N)$ be the universal elliptic curve over the modular curve defined over $\bbQ_p$ (or a finite extension $K$ of $\bbQ_p$).
For any $n \geq 0$, we let $W_n$ be the $n$-th Kuga--Sato variety over $X$.
It is an $(n+1)$-dimensional variety obtained by desingularizing the $n$-fold fiber product $E^n$ over $X$ (c.f. \cite[Apeendix]{BDP}).

Now fix a balanced weight $(x, y, z) \in \Sigma_{\bal}$.
We write $(x, y, z) = (r_1+2, r_2+2, r_3+2)$, $r := \frac{1}{2}(r_1+ r_2+ r_3)$, and set 
$$W := W_{r_1} \times W_{r_2} \times W_{r_3}.$$
As in \cite[\S~3.1]{DR}, one can construct a homologically trivial cycle
$$\Delta_{x, y, z} \in \CH^{r+2}(W)_0:= \ker(\CH^{r+2}(W) \xrightarrow{\cl} H^{2r+4}_{\dR}(W/ \bbC_p)),$$
and study its image under the $p$-adic Abel--Jacobi map
$$\AJ_p: \CH^{r+2}(W)_0 \rightarrow [\Fil^{r+2} H^{2r+3}_{\dR}(W/\bbC_p)]^\vee.$$

Suppose that the family $\bff$ is of slope $a$ with $a < x-1$, and 
assume that the specializations $\bff_x, \bfg_y, \bfh_z$ are $p$-stabilizations of classical modular forms $f_x^0, g_y^0, h_z^0$, respectively.
For simplicity, we will further assume that $f_x^0, g_y^0, h_z^0$ are newforms of the same level $N$ in the introduction.

Associated with the triple  $(f_x^0, g_y^0, h_z^0)$, we construct an element
\begin{align*}
    \eta_f^a \otimes \omega_g \otimes \omega_h &\in H^{r_1+1}_{\dR}(W_{r_1}) \otimes \Fil^{r_2+1}H^{r_2+1}_{\dR}(W_{r_2}) \otimes \Fil^{r_3+1}H^{r_3+1}_{\dR}(W_{r_3}) \\
    & \subset \Fil^{r+2} ( H^{r_1+1}_{\dR}(W_{r_1}) \otimes H^{r_2+1}_{\dR}(W_{r_2}) \otimes H^{r_3+1}_{\dR}(W_{r_3})) \\
    & \subset \Fil^{r+2} H^{2r+3}_{\dR}(W/ \bbC_p),
\end{align*}
where the first inclusion is by the balancedness assumption, and the second one is from the K\"{u}nneth decomposition of $W = W_{r_1} \times W_{r_2} \times W_{r_3}$.
In particular, $\eta_f^a \otimes \omega_g \otimes \omega_h$ lies inside the domain of $\AJ_p(\Delta_{x, y, z})$.

Before stating the Gross--Zagier formula, we need to introduce several Euler factors.
For any $\phi \in S_k(\Gamma_1(N) , \chi)$, we shall write the Hecke polynomial as
$$ x^2 - a_p(\phi) x + \chi(p)p^{k-1} = (x- \alpha_\phi)(x-\beta_\phi)$$
with $\ord_p(\alpha_{\phi}) \leq \ord_p(\beta_\phi)$.
For the modular form $f_x^0$ associated with $\bff_x$, we also assume that $a = \ord_p(\alpha_{f_x^0})$.
To simplify the notations, we will write $\alpha_f$ for $\alpha_{f_x^0}$ and similarly for the modular forms $g$ and $h$ when no confusion might occur.

The first result of this paper is the theorem below.
\begin{theorem}\label{theorem: main theorem in intro}
    Given $(x, y, z) \in \Sigma_{\bal}$.
    Let $c := (x+ y+ z- 2)/2$ and write $x = y +z -2t$ with $t \in \bbZ_{> 0}$.
    Then
    \begin{equation}
        \scrL_p^f(\bff, \bfg, \bfh)(x, y, z) =(-1)^{t-1} \frac{\scrE(f_x^0, g_y^0, h_z^0) }{(t-1)!\scrE_0(f_x^0)\scrE_1(f_x^0)} \AJ_p(\Delta_{x, y, z}) (\eta_f^a \otimes \omega_g \otimes \omega_h),
    \end{equation}
    where the Euler factors are given by
    \begin{align}
    \begin{split}
    \scrE_0(f_x^0) &:= 1 - \beta_f^2 \chi_f^{-1}(p) p^{1-x},\\
    \scrE_1(f_x^0) &:= 1-  \beta_f^2 \chi_f^{-1}(p) p^{-x},\\
    \scrE(f_x^0, g_y^0, h_z^0) &:= (1-\beta_f \alpha_g \alpha_h p^{-c}) (1-\beta_f \alpha_g \beta_h p^{-c}) (1-\beta_f \beta_g \alpha_h p^{-c}) (1-\beta_f \beta_g \beta_h p^{-c}).
    \end{split}
    \end{align}
\end{theorem}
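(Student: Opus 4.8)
\emph{Proof strategy.} The plan is to follow the template of Darmon--Rotger \cite{DR}, replacing Hida theory by the Andreatta--Iovita theory of nearly overconvergent families recalled in \S\ref{section: L-function} and Coleman integration by Besser's finite polynomial cohomology. First I would unwind the definition of the $p$-adic $L$-function at the point $(x,y,z)$, so that
\[
\scrL_p^f(\bff,\bfg,\bfh)(x,y,z)=\frac{\bigl(\bff_x^*,\ \nabla^{-t}\bfg_y^{[p]}\times\bfh_z\bigr)}{(\bff_x^*,\bff_x^*)}.
\]
Since $t\in\bbZ_{>0}$, the interpolation relation $\nabla^{s}\circ\nabla^{-t}=\nabla^{-t+s}$ exhibits $\nabla^{-t}$ at this specialization as a genuine $t$-fold primitive for the Gauss--Manin connection acting on the $p$-depleted form $\bfg_y^{[p]}\times\bfh_z$; the $p$-depletion is exactly what makes the relevant overconvergent projector converge and the primitive well defined. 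The objective is then to identify this numerator with the Abel--Jacobi pairing $\AJ_p(\Delta_{x,y,z})(\eta_f^a\otimes\omega_g\otimes\omega_h)$.

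The geometric core is that $\AJ_p$ on $\CH^{r+2}(W)_0$ can be computed through Besser's finite polynomial cohomology $H^{\ast}_{\fp}$, which both refines Coleman integration and carries a functorial cup product. Realizing $\Delta_{x,y,z}$ as a twist, by the relevant symmetric power local systems, of the modular curve $X$ diagonally embedded in $W=W_{r_1}\times W_{r_2}\times W_{r_3}$, and combining the K\"unneth decomposition of $W$ with the Leray filtration of each $W_{r_i}\to X$, the pairing $\AJ_p(\Delta_{x,y,z})(\eta_f^a\otimes\omega_g\otimes\omega_h)$ collapses to a finite-polynomial cup product over $X$ between the class of $\eta_f^a$ and a primitive of the product of the classes of $\omega_g$ and $\omega_h$. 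That primitive is exactly the overconvergent projection of $\nabla^{-t}(\bfg_y^{[p]}\times\bfh_z)$ occurring in the numerator above, because $\eta_f^a$ lies in the slope-$a$ unit-root part of the $\bff$-isotypic piece of $H^1_{\dR}$, so that only the $p$-depleted component of $\omega_g\cdot\omega_h$ contributes to the pairing, up to an Euler factor recorded below. Making $\nabla^{-t}$ explicit on $q$-expansions of $p$-depleted forms, where it is the $t$-fold primitive of $\theta=q\frac{d}{dq}$, and comparing with the cycle-class normalization inside $H^{2r+3}_{\dR}(W/\bbC_p)$, produces the constant $1/(t-1)!$ and the sign $(-1)^{t-1}$, exactly as in the weight $(2,1,1)$ computation of \cite{DR} carried through to arbitrary balanced weights.

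Next I would track the Euler factors. The four-term product $\scrE(f_x^0,g_y^0,h_z^0)$ appears when undoing the $p$-depletion $(1-VU)$ and the $p$-stabilizations $\bfg_y=g_y^0-\beta_g\,Vg_y^0$, $\bfh_z=h_z^0-\beta_h\,Vh_z^0$ while projecting onto the $U_p$-eigenline attached to $\bff_x^*$ (whose governing eigenvalue is $\beta_f$, given that $\eta_f^a$ is the slope-$a$ class and $a=\ord_p(\alpha_{f_x^0})$): the action of Frobenius on the three K\"unneth factors assembles the products of one Frobenius eigenvalue from each of $f,g,h$ evaluated at $p^{-c}$. The denominator $\scrE_0(f_x^0)\scrE_1(f_x^0)$ comes from two normalizations of the self-pairing: $\scrE_0(f_x^0)=1-\beta_f^2\chi_f^{-1}(p)p^{1-x}$ is the standard factor relating $(\bff_x^*,\bff_x^*)$ to the Petersson norm of the newform $f_x^0$, while $\scrE_1(f_x^0)=1-\beta_f^2\chi_f^{-1}(p)p^{-x}$ relates the finite-polynomial self-pairing of the unit-root class $\eta_f^a$ to the naive one; their product is precisely what makes the identity independent of the auxiliary test-vector choices. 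Compatibility with the Atkin--Lehner involution and with the known interpolation of $\scrL_p^f$ on $\Sigma_f$ (via the Ichino formula, \cite[Theorem~4.7]{DR} and \cite[Corollary~5.13]{tripleL}) pins down any residual normalization.

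I expect the main obstacle to be the overconvergence bookkeeping in the second step: making Besser's finite polynomial cohomology on the Kuga--Sato tower interact correctly with the Andreatta--Iovita iterated connection $\nabla^{-t}$ on nearly overconvergent families, and verifying that the overconvergent projector implicit in the construction of $\scrL_p^f$ matches the primitive produced by the finite-polynomial formalism. In particular one must control the growth of the primitive of the (non-overconvergent) depleted form and check that the unit-root/Frobenius splitting of $H^1_{\dR}$ of the modular curve agrees on the nose with the splitting used to define $\eta_f^a$; this is where the slope hypothesis $a<x-1$ is essential.
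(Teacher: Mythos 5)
Your overall architecture matches the paper's: specialize $\scrL_p^f$ at $(x,y,z)$, express $\AJ_p(\Delta_{x,y,z})(\eta_f^a\otimes\omega_g\otimes\omega_h)$ as a pairing of $\eta_f^a$ against the slope $\leq a$ projection of $\pr_{r_1}(G^{[p]}\times\omega_h)$ for a $\nabla$-primitive $G^{[p]}$ of $\omega_{g^{[p]}}$, and bridge the two sides by a $q$-expansion comparison producing $(-1)^{t-1}/(t-1)!$, with $\scrE_0$ coming from the level $Np$ versus level $N$ Petersson comparison. Two points, however, are genuine gaps rather than omitted details.

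First, you propose to compute $\AJ_p(\Delta_{x,y,z})$ by a cup product in Besser's finite polynomial cohomology on the Kuga--Sato product. The paper explicitly notes (remark following Theorem \ref{theorem: AJ map of diagonal cycle}) that this route is currently only carried out for weight $(2,2,2)$; a theory of fp-cohomology and Abel--Jacobi maps with coefficients adequate for general $W_{r_1}\times W_{r_2}\times W_{r_3}$ is still in preparation. The paper instead imports Darmon--Rotger's computation wholesale, replacing $e_{\ord}$ by $e^{\leq a}$, and that replacement is where the second gap lies: to run the DR argument one needs (i) $e^{\leq a}(G^{[p]}\cdot V\omega_h)=0$, which holds because $e^{\leq a}$ is a power series in $U$ with no constant term (a fact extracted from the Riesz decomposition), and (ii) the finite-slope analogue of DR's Proposition 2.11, namely $\langle\eta,\omega\rangle=\langle\eta,e^{\leq a}\omega\rangle$ whenever $\phi$ acts on $\eta$ with eigenvalue of valuation $\leq a$. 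In the ordinary case (ii) is immediate from $e_{\ord}=\lim U^{n!}$; in the finite-slope case $e^{\leq a}$ is no longer such a limit, and the paper devotes its appendix (Lemma \ref{lemma: pairing general form}) to proving it via Riesz theory and Weierstrass division in a Tate algebra. Your proposal never addresses how the slope $\leq a$ projector interacts with the Poincar\'e/Petersson pairing, and without that neither $\scrE_1$ nor $\scrE$ can be extracted, nor can $h$ be replaced by its $p$-stabilization $h_z$ in the pairing. Finally, the exact identity $(-1)^{t-1}(t-1)!\,H^\dagger(\nabla^{-t}(g^{[p]})\times h_z)=H^\dagger(\pr_{r_1}(G^{[p]}\times h_z))$ --- which the paper singles out as its main novelty, and which in the finite-slope case requires matching \emph{all} terms of the two polynomial $q$-expansions, not just the leading ones as the unit-root splitting permits in the ordinary case --- is asserted rather than verified in your sketch; the verification is an explicit binomial identity that still needs to be supplied.
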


After establishing the $p$-adic Gross--Zagier formula, we proceed to generalize two results in the paper \cite{DR2}.
Let $E$ be an elliptic curve defined over $\bbQ$ and 
$$\rho : \Gal_{\bbQ} := \Gal (\overline{\bbQ} /\bbQ ) \rightarrow \Aut_L(V_\rho) \cong \GL_n (L)$$
be an Artin representation 
with coefficient in a finite extension $L$ of $\bbQ$, which factors through a finite Galois extension $H/ \bbQ$.
Then one has $L(E, \rho, s)$, the Hasse--Weil--Artin $L$-function of $E$ twisted by $\rho$. 
Let $E(H)^\rho := \Hom_{G_\bbQ}(V_\rho, E(H) \otimes_\bbZ L)$ be the $\rho$-isotypic component of the Mordell--Weil group.
One defines the analytic and algebraic rank of the twist of $E$ by $\rho$ as 
$$r_{\mathrm{an}}(E, \rho) := \ord_{s=1} L(E, \rho, s), \quad r(E, \rho) := \dim_L (E(H))^\rho .$$
The Galois-equivariant Birch and Swinnerton-Dyer conjecture then predicts that 
$$r_{\mathrm{an}}(E, \rho) = r(E, \rho).$$

The triple product $L$-function appears in the special case $\rho = \rho_1 \otimes \rho_2$, where 
$$\rho_1, \rho_2 : G_{\bbQ} \rightarrow \GL_2(L)$$
are two-dimensional odd irreducible Artin representation with $\chi = \det(\rho_1) = \det(\rho_2)^{-1}$.
As the elliptic curve $E$ is associated with a cuspidal newform $f \in S_2(N_f)$, and the representations $\rho_1, \rho_2$ are associated with cuspidal newforms $g \in S_1(N_g, \chi), h \in S_1(N_h, \chi^{-1})$, respectively,
$L(E, \rho ;s)$ is equal to the triple product $L$-function $L(f, g, h; s)$.

Assuming that the modular forms $f, g, h$ are ordinary at a fixed prime $p$, and let $\mathbf{f}, \mathbf{g}, \mathbf{h}$ be three Hida families passing through respectively $f, g, h$, Darmon--Rotger proved the following result.
\begin{theorem} [{\cite[Theorem~B]{DR2}}] \label{theorem: Selmer group}
    If $L(f, g, h; 1)=0$ and $\scrL_p^g( \mathbf{g}, \mathbf{f}, \mathbf{h})(1, 2, 1) \neq 0$ for some choice of test vectors, then
    $$\dim_{L_p} \Sel_p( E, \rho) \geq 2$$
    where $L_p$ is the completion of $L$ in $\overline{\bbQ}_p$, $\Sel_p( E, \rho)$ is the $\rho$-isotypic component of the Bloch--Kato Selmer group (c.f. \S 6.1 in \textit{loc. cit.}).
    The statement stays true if we replace $\scrL_p^g( \mathbf{g}, \mathbf{f}, \mathbf{h})$ by $\scrL_p^h( \mathbf{h}, \mathbf{f}, \mathbf{g})$.
\end{theorem}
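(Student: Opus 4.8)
The plan is to follow the strategy of \cite{DR2}, using the $p$-adic Gross--Zagier formula of Theorem~\ref{theorem: main theorem in intro} (in its ordinary specialization) as the key analytic input. Fix Hida families $\bff,\bfg,\bfh$ through $f,g,h$ and work at the weight triple $(2,1,1)$, whose center is $c=1$. The value $\scrL_p^g(\bfg,\bff,\bfh)(1,2,1)$ lies outside the region of interpolation of $\scrL_p^g$: at $(1,2,1)$ the dominant weight $2$ is carried by $\bff$, not by the distinguished form $\bfg$, so this is the boundary case $1=2+1-2t$ with $t=1$ of a point of $\Sigma_{\bal}$. First I would apply the reciprocity law underlying Theorem~\ref{theorem: main theorem in intro}, with the roles of $\bff$ and $\bfg$ interchanged and extended to this boundary weight, to obtain an identity of the shape
\[
\scrL_p^g(\bfg,\bff,\bfh)(1,2,1)=\frac{\scrE(g,f,h)}{\scrE_0(g)\,\scrE_1(g)}\,\AJ_p(\Delta_{1,2,1})\bigl(\eta_g\otimes\omega_f\otimes\omega_h\bigr),
\]
where $\Delta_{1,2,1}$ is the generalized diagonal cycle on the relevant triple product of curves (Kuga--Sato varieties with all exponents zero, as $g$ and $h$ have weight one). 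A preliminary point to settle is that the Euler factors $\scrE_0(g),\scrE_1(g),\scrE(g,f,h)$ are nonzero at this boundary weight; this is where ordinarity and the chosen $p$-stabilizations enter, and where one must exclude ``exceptional zero'' configurations, perhaps at the cost of a mild supplementary hypothesis as in \cite{DR2}.

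Granting the hypothesis $\scrL_p^g(\bfg,\bff,\bfh)(1,2,1)\neq 0$, the displayed identity forces $\AJ_p(\Delta_{1,2,1})\neq 0$, so the diagonal cycle class is nonzero. I would then pass from the de Rham (syntomic) Abel--Jacobi map to the $p$-adic \'etale Abel--Jacobi map, using that the former factors through the latter followed by a Bloch--Kato logarithm that is injective on the relevant Hodge piece; hence the \'etale class is nonzero as well. Projecting the resulting class in $H^1(\bbQ,H^\bullet_{\et}(W_{\overline{\bbQ}}))$ onto the $(f,g,h)$-isotypic Hecke eigencomponent, and using the identifications $V(g)\cong\rho_1$ and $V(h)\cong\rho_2$ for the weight-one forms, so that $V(g)\otimes V(h)\cong V_\rho$ after a Tate twist, produces a generalized Kato class $\kappa(f,g,h)\in H^1(\bbQ,V(f)\otimes V_\rho)$ (suitably twisted). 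At this stage the hypothesis $L(f,g,h;1)=0$ enters: through the interpolation property of the $f$-dominant $p$-adic $L$-function $\scrL_p^f$ and its own reciprocity law, the vanishing of the central value is equivalent to the vanishing of the singular part of $\kappa(f,g,h)$ at $p$ along the $\bff$-direction, which is exactly the Bloch--Kato local condition at $p$; away from $p$ the class is automatically Selmer, being of geometric origin. Therefore $\kappa(f,g,h)$ is a nonzero element of $\Sel_p(E,\rho)$, and $\dim_{L_p}\Sel_p(E,\rho)\geq 1$.

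To upgrade $\geq 1$ to $\geq 2$, I would invoke parity. The representation $V(f)\otimes V_\rho$ is Kummer self-dual and symplectic: $V(f)$ is the symplectic Tate module of $E$, and $\rho=\rho_1\otimes\rho_2$ is orthogonal with trivial determinant, since $\det\rho_1\cdot\det\rho_2=\chi\cdot\chi^{-1}=1$. Its global root number equals $\prod_{v\mid N\infty}\varepsilon_v=\varepsilon_\infty=+1$, because $(2,1,1)$ is unbalanced and Assumption~H forces the finite local root numbers to be $+1$. By the parity theorem for Bloch--Kato Selmer groups available in this setting (Nekov\'{a}\v{r}, as used in \cite{DR2}), $\dim_{L_p}\Sel_p(E,\rho)$ is even, and together with the nonzero class just produced this yields $\dim_{L_p}\Sel_p(E,\rho)\geq 2$. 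The assertion with $\scrL_p^h(\bfh,\bff,\bfg)$ in place of $\scrL_p^g(\bfg,\bff,\bfh)$ follows identically upon exchanging $g$ and $h$.

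The hard part will be the analysis at the degenerate weight-one triple $(2,1,1)$. It lies on the boundary of $\Sigma_{\bal}$ rather than strictly inside, so the Gross--Zagier/reciprocity formula must be made available precisely there; the weight-one specializations of the Hida families, though classical, demand care regarding their crystalline behaviour at $p$ and the potential presence of exceptional zeros; and one must verify that the specialization of the $\Lambda$-adic diagonal class coincides with the geometric cycle $\Delta_{1,2,1}$ and lands in the Bloch--Kato Selmer group. Pinning down the implication ``nonzero algebraic cycle class $\Rightarrow$ nonzero Bloch--Kato Selmer class'' at a point of non-regular Hodge--Tate weights, together with the nonvanishing of $\scrE_0,\scrE_1,\scrE$, is the technical heart of the argument.
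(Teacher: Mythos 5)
This theorem is not proved in the paper at all: it is quoted verbatim from \cite{DR2} (Theorem~B there), and the paper's only indication of an argument is the sentence in the introduction that ``one can construct, out of the crystalline class $\kappa(\mathbf{f_2},\mathbf{g_1},\mathbf{h_1})$, \emph{at least two linearly independent elements} in $\Sel_p(E,\rho)$,'' together with the pointer to \cite[\S~6.4]{DR2}. So the comparison has to be against that argument.

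Your first two steps track it correctly: the reciprocity law at the $f$-unbalanced point $(2,1,1)$ (equivalently, the point $(1,2,1)$ outside the interpolation region of $\scrL_p^g$) shows that $\scrL_p^g(\bfg,\bff,\bfh)(1,2,1)$ computes, up to nonzero Euler factors, a Bloch--Kato logarithm of the specialized diagonal class; the hypothesis $L(f,g,h;1)=0$ makes that class crystalline at $p$, hence a Selmer class; and $\scrL_p^g(1,2,1)\neq 0$ makes it nonzero. (One caveat: at weight $(2,1,1)$ there is no cycle $\Delta_{1,2,1}$ --- the exponents $r_i=k_i-2$ would be negative for $g$ and $h$ --- so the class exists only as the specialization of the $\Lambda$-adic class, and the local conditions away from $p$ are not ``automatic from geometric origin''; you flag this, and it is handled in \cite{DR2}/\cite{reciprocity_balanced}.)

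The genuine gap is your final step, the passage from $\dim\geq 1$ to $\dim\geq 2$ by parity. The parity conjecture for the Bloch--Kato Selmer group of $E$ twisted by a general tensor product $\rho=\rho_1\otimes\rho_2$ of odd irreducible two-dimensional Artin representations is \emph{not} a theorem in this generality: Nekov\'a\v{r}'s and the Dokchitsers' results cover self-dual twists by characters, induced (dihedral) representations, and representations in the span of permutation representations, but not an arbitrary $\rho_1\otimes\rho_2$, and in any case they concern extended/$p^\infty$-Selmer groups whose comparison with $\Sel_p(E,\rho)$ requires further hypotheses. This is precisely why \cite{DR2} does \emph{not} argue by parity. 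Instead one exploits the fact that the weight-one forms $g$ and $h$ each have two $p$-stabilizations, producing several generalized Kato classes $\kappa(f,g_\alpha,h_\alpha)$, $\kappa(f,g_\alpha,h_\beta),\dots$, all of which lie in $\Sel_p(E,\rho)$ once $L(f,g,h;1)=0$. The reciprocity laws compute a $2\times 2$ matrix of Bloch--Kato logarithms of two of these classes along two Frobenius-eigenvector directions of $H^1_f(\bbQ_p,\cdot)$: the ``diagonal'' entries are multiples of $\scrL_p^f(\bff,\bfg,\bfh)(2,1,1)$, which vanishes because it interpolates the central value $L(f,g,h;1)=0$, while an off-diagonal entry is a nonzero multiple of $\scrL_p^g(\bfg,\bff,\bfh)(1,2,1)$. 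Nonvanishing of the determinant then gives two linearly independent Selmer classes directly. If you want to keep your single-class construction, you must replace the parity appeal by this multiplicity-of-stabilizations argument (or restrict to cases, e.g.\ $\rho$ induced from a character, where a parity theorem is actually available).
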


The technique used in \cite{DR2} was later improved by Bertolini--Seveso--Venerucci in \cite{reciprocity_balanced}.
We will give an over-simplified illustration of their approach below.

First, one constructs an element (called the \textit{diagonal class})
$$\underline{\kappa} = \underline{\kappa}(\mathbf{f}, \mathbf{g}, \mathbf{h}) \in H^1 (\bbQ, V(\mathbf{f}, \mathbf{g}, \mathbf{h}))$$
in the cohomology group of a certain big Galois representation $V(\mathbf{f}, \mathbf{g}, \mathbf{h})$, 
such that the image of $\underline{\kappa}_p :=res_p(\underline{\kappa})$ under the Perrin-Riou big logarithm 
$\scrL og_\xi$ is the triple product $p$-adic $L$-function $\scrL_p^\xi$ for $\xi \in \{f, g, h \}$, where $res_p$ denotes the restriction to $G_{\bbQ_p}$.

Second, one studies the specialization $\underline{\kappa}_p(\mathbf{f_2}, \mathbf{g_1}, \mathbf{h_1})$ of the class $\underline{\kappa}_p(\mathbf{f}, \mathbf{g}, \mathbf{h})$ to weight $(2, 1, 1)$.
It turns out that the complex $L$-function encodes certain information about this specialization.
In particular, one has
\begin{theorem} [{\cite[Theorem~6.4]{DR} and \cite[Theorem~B]{reciprocity_balanced}}]
    The class $\underline{\kappa}_p (\mathbf{f_2}, \mathbf{g_1}, \mathbf{h_1})$ is crystalline if and only if the complex $L$-function $L(f, g, h; s)$ vanishes at the central point $s=1$.
\end{theorem}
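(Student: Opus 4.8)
The plan is to prove the equivalence through the intermediary quantity $\scrL_p^f(\bff,\bfg,\bfh)(2,1,1)$: its vanishing will be shown equivalent to the crystallinity of $\underline{\kappa}_p(\mathbf{f_2},\mathbf{g_1},\mathbf{h_1})$ by means of the reciprocity law relating the diagonal class to $\scrL_p^f$, and equivalent to the vanishing of $L(f,g,h;1)$ by means of the interpolation property of $\scrL_p^f$ along $\Sigma_f$.

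For the first equivalence, one argues locally at $p$. By construction $\underline{\kappa}_p(\mathbf{f_2},\mathbf{g_1},\mathbf{h_1})\in H^1(\bbQ_p, V^\dagger_{2,1,1})$, where $V^\dagger_{2,1,1}$ is the self-dual Tate twist of $V_f\otimes V_g\otimes V_h$ with $(f,g,h)$ of weights $(2,1,1)$, and ``crystalline'' means that this class lies in $H^1_f(\bbQ_p, V^\dagger_{2,1,1})$. Since $p\nmid N_gN_h$ and $g,h$ have weight one, $V_g|_{G_{\bbQ_p}}$ and $V_h|_{G_{\bbQ_p}}$ are unramified and split into Frobenius-eigenlines, while $V_f|_{G_{\bbQ_p}}$, being ordinary, is an extension of the unramified line $V_f^-$ by $V_f^+$; tensoring produces a $G_{\bbQ_p}$-stable filtration of $V^\dagger_{2,1,1}$, and by construction of the diagonal class $\underline{\kappa}_p(\mathbf{f_2},\mathbf{g_1},\mathbf{h_1})$ lies in the balanced local subgroup $H^1_{\bal}(\bbQ_p, V^\dagger_{2,1,1})$. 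One then isolates the subquotient $V^f$ of $V^\dagger_{2,1,1}|_{G_{\bbQ_p}}$ on which $f$ contributes its unramified part $V_f^-$, and shows that the class is crystalline if and only if its image in $H^1(\bbQ_p,V^f)$ lies in $H^1_f(\bbQ_p,V^f)$; on the relevant summand this last condition is detected by the Bloch--Kato dual exponential $\exp^*$ on $V^f$, which one checks is injective there.

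Next one invokes the reciprocity law $\scrL og_f\bigl(res_p\,\underline{\kappa}(\bff,\bfg,\bfh)\bigr)=\scrL_p^f(\bff,\bfg,\bfh)$ of \cite[Theorem~6.4]{DR} and \cite[Theorem~B]{reciprocity_balanced}, in which $\scrL og_f$ is the Perrin-Riou big logarithm attached to the family of local pieces $V^f$. Specialized at $(2,1,1)$, the interpolation property of $\scrL og_f$ expresses $\scrL_p^f(\bff,\bfg,\bfh)(2,1,1)$ as an explicit Euler factor $\scrE^\ast$ times the pairing of the dual exponential $\exp^*$ of the image of $\underline{\kappa}_p(\mathbf{f_2},\mathbf{g_1},\mathbf{h_1})$ in $H^1(\bbQ_p,V^f)$ against a natural de Rham class. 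Combined with the previous paragraph, this shows that, provided $\scrE^\ast\neq 0$, the class $\underline{\kappa}_p(\mathbf{f_2},\mathbf{g_1},\mathbf{h_1})$ is crystalline if and only if $\scrL_p^f(\bff,\bfg,\bfh)(2,1,1)=0$. For the second equivalence, note that $(2,1,1)$ lies in the closure of the interpolation region $\Sigma_f$ (weight one being the limiting case), so that the interpolation formula for $\scrL_p^f$ on $\Sigma_f$ — the combination of the Ichino formula with \cite[Theorem~4.7]{DR} and \cite[Corollary~5.13]{tripleL} recalled in the introduction — extends by continuity to give, with $C$ a product of explicit local factors at $p$ and archimedean constants, an identity
\begin{equation*}
  \scrL_p^f(\bff,\bfg,\bfh)(2,1,1)^2 \;=\; C\cdot\frac{L(f,g,h;1)}{(f,f)^2}.
\end{equation*}
As soon as $C\neq 0$ this gives $\scrL_p^f(\bff,\bfg,\bfh)(2,1,1)=0\iff L(f,g,h;1)=0$, and concatenating the two equivalences proves the theorem.

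The main obstacle is to determine the Euler factors $\scrE^\ast$ and $C$ at the limiting point $(2,1,1)$ and to show that neither vanishes. Because $g$ and $h$ have weight one, their Satake parameters $\alpha_g,\beta_g,\alpha_h,\beta_h$ all have complex absolute value $1$ whereas $|\beta_f|=p$; thus the factors of shape $1-\beta_f^{\pm1}\alpha_g\alpha_h\,p^{-1}$ that occur in $\scrE^\ast$ and $C$ — precisely the ``exceptional zero'' configurations — have absolute value $1$ and could a priori vanish, and one must rule this out using Assumption H together with the running hypotheses, or else isolate and treat that degenerate case by hand. Intertwined with this is the need to match the specialization at $(2,1,1)$ of the big logarithm $\scrL og_f$, an object over weight space, with the naive Bloch--Kato dual exponential on $V^f_{2,1,1}$ — the point $(2,1,1)$ sitting exactly on the frontier between the ranges where the big logarithm interpolates the dual exponential and where it interpolates the logarithm — together with the need to know that the families $\bfg,\bfh$ are \'etale over weight space at the weight-one points $g_1,h_1$, so that the whole specialization procedure is well-behaved; this last input is where the study of the eigencurve at weight-one points comes in. Granting these inputs, what remains are the explicit reciprocity and Garrett--Ichino computations already carried out in \cite{DR}, \cite{tripleL} and \cite{reciprocity_balanced}.
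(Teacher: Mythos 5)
Your proposal follows essentially the same route as the paper (which for this statement defers to \cite[\S 6.2]{DR} and \cite[\S 9.1]{reciprocity_balanced}, and reproduces the same mechanism in Theorem \ref{theorem: diagonal class} and Corollary \ref{cor: crystalline at unbalanced}): reciprocity law $\scrL og_f(\underline{\kappa}_p)=\scrL_p^f$, the dual-exponential interpolation of the big logarithm at the unbalanced point $(2,1,1)$ to convert crystallinity into vanishing of $\scrL_p^f(2,1,1)$, and the Garrett--Ichino interpolation formula to convert that into vanishing of $L(f,g,h;1)$. The outline is correct. One correction to your discussion of the ``main obstacle'': the non-vanishing of the Euler factors at $(2,1,1)$ is not a genuine difficulty and is not what Assumption H is for. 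You assert $|\beta_f|=p$, but for $p\nmid N_f$ and $f$ of weight $2$ the Weil bound gives $|\alpha_f|=|\beta_f|=p^{1/2}$ in complex absolute value, while the weight-one parameters $\alpha_g,\beta_g,\alpha_h,\beta_h$ are roots of unity by Deligne--Serre; hence $|\beta_f\alpha_g\alpha_h p^{-1}|=p^{-1/2}$ and $|\alpha_f\beta_g\beta_h p^{-1}|=p^{-1/2}$, so the factors $1-\beta_f\alpha_g\alpha_h p^{-1}$ and $1-\alpha_f\beta_g\beta_h p^{-1}$ cannot vanish. The exceptional-zero configuration only occurs when $p\mid N_f$ (multiplicative reduction), which is excluded here since $p\nmid N$; Assumption H concerns local root numbers and is irrelevant to this point. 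Also note that $(2,1,1)$ lies in $\Sigma_f$ itself ($2\ge 1+1$), so no continuity/closure argument is needed for the interpolation formula --- the genuine subtlety at weight one is the classicality of the weight-one specializations of $\bfg,\bfh$, which you do flag.
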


\begin{remark}
    The above theorem actually holds for specializations at general unbalanced weights $(x, y, z)$, with the central point being $c = \frac{x+ y+ z- 2}{2}$.
\end{remark}

Lastly, one can construct, out of the crystalline class $\kappa(\mathbf{f_2}, \mathbf{g_1}, \mathbf{h_1})$, at least two linearly independent elements in $\Sel_p( E, \rho)$ and completes the proof.

\begin{remark}
    Since $g, h$ are modular forms of weight $1$, they are necessarily ordinary at $p$,
    the two $p$-adic $L$-functions $\scrL_p^g( \mathbf{g}, \mathbf{f}, \mathbf{h})$ and $\scrL_p^h( \mathbf{h}, \mathbf{f}, \mathbf{g})$ are already defined in \cite{DR}.
    The only remaining obstacle for the finite slope case is about $\scrL_p^f$ and the existence of the class $\underline{\kappa}( \mathbf{f}, \mathbf{g}, \mathbf{h})$ that gives rise to $L_p^f$,
    whose proof will rely on our $p$-adic Gross--Zagier formula for finite slope families.
\end{remark}

To summarize, our main result in the second part is the following theorem.

\begin{theorem}\label{theorem: main theorem 2 in intro}
    Let $(\bff, \bfg, \bfh)$ be as in the setting of Theorem \ref{theorem: main theorem in intro}.
    Then there is a unique element $\underline{\kappa}(\bff, \bfg, \bfh) \in H^1(\bbQ, V(\bff, \bfg, \bfh))$ such that 
    $$ \scrL og _f (\underline{\kappa}_p(\bff, \bfg, \bfh)) = \scrL_p^f(\bff, \bfg, \bfh).$$
    Moreover, when specialized to a unbalanced triple $(x, y, z) \in \Sigma_f$, the class $\underline{\kappa}_p(\bff_x, \bfg_y, \bfh_z)$ is crystalline if and only if the classical $L$-function $L(f_x^0, g_y^0, h_z^0 ; s)$ vanishes at the central point $c = \frac{x+y+z-2}{2}$.
\end{theorem}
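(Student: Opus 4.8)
The plan is to construct $\underline{\kappa}(\bff,\bfg,\bfh)$ by $p$-adically interpolating the geometric diagonal classes along the Coleman families, then to characterize crystallinity of the specializations through the Perrin--Riou big logarithm, with the $p$-adic Gross--Zagier formula of Theorem~\ref{theorem: main theorem in intro} serving as the bridge between the two halves of the statement.

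\textbf{Step 1: Construction of the big diagonal class.} First I would recall the big Galois representation $V(\bff,\bfg,\bfh)$ attached to the triple of Coleman families, built from the (locally free, finite rank) families of Galois representations $V(\bff)$, $V(\bfg)$, $V(\bfh)$ over the affinoids $\Omega_f,\Omega_g,\Omega_h$, together with the self-dual twist making the central character trivial. The diagonal cycles $\Delta_{x,y,z}\in\CH^{r+2}(W)_0$ have $p$-adic \'etale Abel--Jacobi images $\AJ_p^{\et}(\Delta_{x,y,z})\in H^1(\bbQ,V(f_x^0,g_y^0,h_z^0)(r+2))$; the key input is that these classes vary analytically, i.e.\ there is a class $\underline{\kappa}(\bff,\bfg,\bfh)\in H^1(\bbQ,V(\bff,\bfg,\bfh))$ whose specialization at each $(x,y,z)\in\Sigma_{\bal}$ recovers (a $p$-stabilization-adjusted multiple of) $\AJ_p^{\et}(\Delta_{x,y,z})$. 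For Hida families this is the interpolation result behind \cite{DR2} and \cite{reciprocity_balanced}; in the finite slope setting one instead works with the overconvergent \'etale cohomology sheaves on the Coleman--Mazur eigencurve (the coherent analogue of which underlies the Andreatta--Iovita construction recalled in \S\ref{section: L-function}), and the class is obtained as a compatible system of pushforwards of the diagonal embedding into the triple fiber product of Kuga--Sato towers. Uniqueness follows because $H^1(\bbQ,V(\bff,\bfg,\bfh))$ injects into a module of finite rank over the (reduced, torsion-free) coefficient ring and the class is pinned down by its values on the Zariski-dense set $\Sigma_{\bal}$ once the logarithm identity is established.

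\textbf{Step 2: Matching with $\scrL_p^f$ via Perrin--Riou.} The Perrin--Riou big logarithm $\scrL og_f$ is a map from (a subspace of) $H^1(\bbQ_p, V(\bff,\bfg,\bfh))$ to the space of rigid functions on $\Sigma$, interpolating the Bloch--Kato logarithm and dual-exponential at classical points; its defining property is that at a balanced $(x,y,z)$ it computes the Bloch--Kato logarithm of $\res_p$ of the specialized class paired against the differential $\eta_f^a\otimes\omega_g\otimes\omega_h$. On the other hand, by the comparison between the $p$-adic \'etale and de Rham Abel--Jacobi maps (the former composed with $\log_{\mathrm{BK}}$ equals the latter, up to the explicit Euler/period factors arising from passing between $f_x^0$ and its $p$-stabilization $\bff_x$), the value $\log_{\mathrm{BK}}(\res_p\underline{\kappa}(\bff,\bfg,\bfh))_{(x,y,z)}(\eta_f^a\otimes\omega_g\otimes\omega_h)$ equals $\AJ_p(\Delta_{x,y,z})(\eta_f^a\otimes\omega_g\otimes\omega_h)$ up to exactly the factors $(-1)^{t-1}\scrE(f_x^0,g_y^0,h_z^0)/((t-1)!\,\scrE_0(f_x^0)\scrE_1(f_x^0))$ appearing in Theorem~\ref{theorem: main theorem in intro}. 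Therefore $\scrL og_f(\underline{\kappa}_p(\bff,\bfg,\bfh))$ and $\scrL_p^f(\bff,\bfg,\bfh)$ are two rigid analytic functions on $\Sigma$ that agree on the dense subset $\Sigma_{\bal}$ (provided the normalizing Euler factors in the definition of $\scrL og_f$ are chosen to be precisely those in the Gross--Zagier formula, which is how Perrin--Riou's map is set up), hence they agree identically. This yields the first assertion.

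\textbf{Step 3: Crystallinity at unbalanced points.} For $(x,y,z)\in\Sigma_f$ the point lies in the interpolation region, where $\scrL og_f$ interpolates instead the \emph{dual exponential} $\exp^*$; hence $\scrL og_f(\underline{\kappa}_p)(x,y,z)$, up to a nonzero Euler factor, equals $\exp^*(\res_p\underline{\kappa}_p(\bff_x,\bfg_y,\bfh_z))$ paired against a suitable class. By the identity from Step~2, this quantity equals $\scrL_p^f(\bff,\bfg,\bfh)(x,y,z)$, which by the interpolation property of $\scrL_p^f$ on $\Sigma_f$ (the Ichino--DR--AI result quoted in the introduction) is, up to explicit nonzero factors, the square root of the central critical value $L(f_x^0,g_y^0,h_z^0;c)$. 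Since the local condition defining crystallinity at $p$ for this (motivic weight, unbalanced) representation is exactly the vanishing of $\exp^*$ of the restriction, we conclude that $\underline{\kappa}_p(\bff_x,\bfg_y,\bfh_z)$ is crystalline $\iff\exp^*(\res_p\underline{\kappa}_p)=0\iff \scrL_p^f(x,y,z)=0\iff L(f_x^0,g_y^0,h_z^0;c)=0$, granting that the Euler factors $\scrE_0,\scrE_1,\scrE$ do not vanish at the point in question (a hypothesis of the "sufficiently small slope" regime, which I would state explicitly, excluding the finitely many exceptional points).

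\textbf{Main obstacle.} The hard part is Step~1: establishing that the \'etale Abel--Jacobi images of the diagonal cycles genuinely interpolate into a single class over the three-dimensional affinoid $\Sigma$ in the \emph{finite slope} (non-ordinary) setting, where Hida's ordinary projector is unavailable and one must instead control the overconvergent \'etale sheaves, their Galois cohomology, and the finiteness/torsion-freeness needed for uniqueness. A secondary technical point is tracking the precise Euler and period factors through the comparison of the two Abel--Jacobi maps so that they match the normalization built into $\scrL og_f$ on the nose — any discrepancy there would have to be absorbed into a redefinition of the big logarithm, which must then be checked to be compatible with its interpolation property on $\Sigma_f$ used in Step~3.
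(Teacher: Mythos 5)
Your overall architecture agrees with the paper's: the first assertion is proved by evaluating $\scrL og_f(\underline{\kappa}_p)$ at balanced classical points, identifying the Bloch--Kato logarithm with $\AJ_p$, invoking the $p$-adic Gross--Zagier formula of Theorem \ref{theorem: main theorem in intro}, and concluding by density of the relevant subset of $\Sigma_{\bal}$; the crystallinity criterion then follows from the interpolation of $\exp^*$ on $\Sigma_f$ combined with the Ichino-type interpolation property of $\scrL_p^f$. Your Steps 2 and 3 are essentially the paper's proofs of Theorem \ref{theorem: diagonal class} and Corollary \ref{cor: crystalline at unbalanced}.

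The genuine gap is in Step 1, which you yourself flag as the ``main obstacle'' without resolving it. The paper does \emph{not} construct $\underline{\kappa}$ by interpolating the \'etale Abel--Jacobi images of the cycles $\Delta_{x,y,z}$. Following \cite{reciprocity_balanced}, it builds $\underline{\kappa}$ directly from the explicit invariant $\bm{Det}\in A'_f\hat{\otimes}A_g\hat{\otimes}A_h(-\kappa^*_{fgh})$, pushed forward along the diagonal $d:Y\to Y^3$ and processed through Hochschild--Serre, K\"unneth, the Atkin--Lehner involution, the slope-$\leq a$ duality maps $s_{fgh}$, and the projectors $\pr_{fgh}$ onto $V(\bff)\hat{\otimes}V(\bfg)\hat{\otimes}V(\bfh)$. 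The relation with the cycle classes at balanced specializations is then a \emph{theorem} (the content of \S 8.2--8.4 of \cite{reciprocity_balanced}, used inside the proof of Theorem \ref{theorem: diagonal class}), not the definition; this is precisely what makes the construction unconditional in the finite slope case, whereas your proposed interpolation of $\AJ_p^{\et}(\Delta_{x,y,z})$ is left unproved. A second omission: in the non-ordinary setting $V(\bff)$ admits no $G_{\bbQ_p}$-stable rank-one subrepresentation, so the balanced subspace $H^1_{\bal}$ and the big logarithm $\scrL og_f$ (whose source you describe as a subspace of $H^1(\bbQ_p,V(\bff,\bfg,\bfh))$) can only be defined after passing to $(\varphi,\Gamma)$-modules and the triangulation of $\Drig(V(\bff))$; the paper devotes \S 4.2--4.4 to exactly this, and the Euler factors in Proposition \ref{prop: three variable log} are read off from the $\varphi$-eigenvalue of the graded piece $M(\bff,\bfg,\bfh)_f$. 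Finally, your uniqueness argument is not quite right as stated: uniqueness of a class satisfying $\scrL og_f(\underline{\kappa}_p)=\scrL_p^f$ would require injectivity of the composite $H^1(\bbQ,V(\bff,\bfg,\bfh))\to H^1_{\bal}(\bbQ_p,M(\bff,\bfg,\bfh))\to\scrO_{fgh}$, which is stronger than a class being determined by its specializations.
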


Put in other words, the second part aims to answer the question raised in \cite[Remark~1.3]{reciprocity_balanced}; 
namely, whether their two main results can be generalized to the finite slope case.
Most of the materials are already provided in their paper, except for various Galois cohomology groups and the big logarithm map for finite slope families.
Once these objects are carefully introduced, the proof of Theorem \ref{theorem: main theorem 2 in intro} will follow identically the arguments in \cite[\S~8 and \S~9]{reciprocity_balanced}.
For this reason, we will mainly adopt their notations and omit most of the proofs in \S \ref{section: application}.

With Theorem \ref{theorem: main theorem 2 in intro} established, it is not hard to see that one can prove an analogous result of Theorem \ref{theorem: Selmer group} by the same strategy as in \cite[\S~6]{DR2}.

\begin{remark}
    We would like to mention that we do not generalize or improve \cite[Theorem~A]{DR2} in any sense, since one can always find a prime $p$ such that the elliptic curve $E$ and the modular forms $g, h$ are ordinary at $p$.
\end{remark}

\noindent \textit{Acknowledgements}. 
The author would like thank his supervisors Adrian Iovita and Giovanni Rosso for their constant support.
In addition, the author is grateful to Henri Darmon for his insightful advice on the $p$-adic Gross--Zagier formula.
The work of this article is funded by an NSERC grant and an FRQNT grant.

\section{Triple product \texorpdfstring{$p$}{}-adic \texorpdfstring{$L$}{}-functions} \label{section: L-function}

In this section, we will recall the definition of the triple product $p$-adic $L$-function for finite slope families.
The main reference is \cite{tripleL}, but we will follow closely \cite{katz-type} for simpler setups.
Before going through the construction, we would like to briefly explain the idea of their work.

Fix an integer $N \geq 5$ and a prime $p \nmid N$.
Let $X = X_1(N)$ be the modular curve defined over $\bbZ_p$ with the universal generalized elliptic curve $\pi: E \rightarrow X$.
Let $Y \subset X$ be the affine modular curve and $C \subset X$ be the closed subscheme of cusps. 
Then the two sheaves
$$ \underline{\omega} := \pi_*( \Omega^1_{E/ Y}) \textrm{ and } \calH:= \mathbb{R}^1 \pi_* (\Omega^\bullet_{{E}/ Y})$$
on $Y$ have canonical extensions to $X$ (c.f. \cite[\S~1]{BDP}), which will still be denoted by the same notations $\underline{\omega}$ and $\calH$.

For any $r \in \bbZ$, $H^0(X, \underline{\omega}^r)$ can be identified as the space of modular forms of weight $r$.
For a strict neighborhood $\calU$ of the ordinary locus in $X$, $H^0(\calU, \underline{\omega}^r)$ can be identified as the space of overconvergent modular forms of weight $r$.

The sheaf $\calH$ is equipped with a (increasing) filtration given by the exact sequence 
$$ 0 \rightarrow \underline{\omega} \rightarrow \calH \rightarrow \underline{\omega}^{-1} \rightarrow 0$$
and a Gauss--Manin connection
$$ \nabla: \calH \rightarrow \calH \otimes \Omega^1_X(\log C).$$
For any $r \in \bbZ$, we let $\calH^r := \Sym^r \calH$ be its $r$-th symmetric power.
The filtration and the connection on $\calH$ then extend naturally to $\calH^r$.
By composing with the Kodaira--Spencer isomorphism
$$\mathrm{KS}: \Omega^1_X(\log C) \xrightarrow{\sim} \underline{\omega}^2,$$
the Gauss--Manin connection may be viewed as a map from $\calH^r$ to $\calH^{r+2}$.

The idea is to construct a sheaf $\frakw_k$ (over a certain strict neighborhood) which $p$-adically interpolates the collection $\{ \underline{\omega}^r \}_{r \in \bbZ}$, such that a $p$-adic family of overconvergent modular forms may be viewed as a section of $\frakw_k$ over some strict neighborhood $\calU$ of the ordinary locus.

There is also a sheaf $\bbW_k$ which $p$-adically interpolates $(\calH^n, \Fil_\bullet, \nabla)_{n \in \bbZ}$.
In particular, it allows us to take certain $p$-adic powers of the connection $\nabla$, which is crucial for defining the triple product $p$-adic $L$-function $\scrL^f_p(\mathbf{f}, \mathbf{g}, \mathbf{h})$ as explained in the introduction.

Since their construction is highly technical, we prefer not to provide detailed proofs in this section.
Instead, we will only set up notations and statements that are sufficient for our application.

\subsection{The modular sheaf \texorpdfstring{$\frakw_k$}{} and the de Rham sheaf \texorpdfstring{$\bbW_k$}{} } \label{subsec: modular sheaf}

Let $N, p$ be as before and $X = X_1(N)$ be the modular curve defined over $\bbZ_p$.
We write $\frakX$ for the formal completion of $X$ along its special fiber.
We denote by $\Hdg$ be the $\calO_\frakX$-ideal that is locally generated by a lift of the Hasse invariant.

For any integer $n \geq 2$, we denote by $\frakX_n$ the formal open subscheme in the formal admissible blow-up of $\frakX$ with respect to the ideal $\mathcal{J} = (p, \Hdg^n)$, defined such that $\mathcal{J}$ is generated by $\Hdg^n$ over $\frakX_n$.
The generalized elliptic curve ${E} \rightarrow \frakX_n$ has a canonical subgroup of order $p^r$, where this $r$ depends on $n$ (c.f. \cite[\S~3.1]{tripleL}).
We will write $\calX_n$ for the adic generic fiber of $\frakX_n$.





\begin{definition}
Let $\Lambda$ be a $p$-adically complete and separated ring, and $k: \bbZ_p^\times \rightarrow \Lambda^\times$ a continuous homomorphism.
Then $k$ is called an analytic weight of radius $r \in \bbN$ if there exists an element $u = u_k \in p^{1-r} \Lambda$ such that $k(t) = \exp(u \log(t))$ for all $t \in 1 +p^r \bbZ_p$.
\end{definition}

\begin{definition}
    Let $k :\bbZ_p^\times \rightarrow \Lambda^\times$ be an analytic weight.
    By saying `specialized to a classical weight $\ell \in \bbZ_{>0}$,' we mean that we take the composition $\gamma \circ k $ for some homomorphism $\gamma : \Lambda \rightarrow \bbZ_p$ such that $\gamma \circ k : \bbZ_p^\times \rightarrow \bbZ_p^\times$ is the $\ell$-th power map.    
\end{definition}




One has the following results (c.f. \cite[\S~3.2 \& \S~3.3]{tripleL}):

\begin{prop}
    Let $k$ be an analytic weight of radius $r$ as above, and take an integer $n$ such that ${E} \rightarrow \frakX_n$ admits a canonical subgroup of order $p^r$.
    Then there are $\calO_{\frakX_n} \hat{\otimes}_{\bbZ_p} \Lambda$-modules $\frakw_k$ and $\bbW_{k}$ with the following properties:
    \begin{enumerate}
        \item $\frakw_k$ is an invertible $\calO_{\frakX_n} \hat{\otimes}_{\bbZ_p} \Lambda$-module.
        Over the generic fiber $\calX_n$, the global sections of $\frakw_k$  can then be identified as $p$-adic families of overconvergent modular forms of weight $k$;
        \item $\bbW_k$ has a natural increasing filtration $(\Fil_n)_{n \geq 0}$ by $\calO_{\frakX_n} \hat{\otimes} \Lambda$-modules with $\Fil_0 = \frakw_k$. 
        Moreover, when specializing to a classical weight $r \geq 1$, $\Fil_r \bbW_r \cong \calH^r$ over $\calX_n$ and is compatible with the Hodge filtration of $\calH^r$.
        \item There is a Gauss--Manin connection $\nabla_{k}$ (with poles at $V(\Hdg)$) on $\bbW_{k}$ that satisfies Griffiths transversality.
        By composing with the Kodaira--Spencer map, the connection $\nabla_k$ can be viewed as a map from $\bbW_k$ to $\bbW_{k+2}$.
    \end{enumerate}

\end{prop}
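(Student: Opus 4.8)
Both sheaves are constructed in \cite[\S~3.2--3.3]{tripleL}; since that construction is quite involved, I only indicate how I would organize it and where the genuine difficulty lies. The overall plan is to realize $\frakw_k$ and $\bbW_k$ as subsheaves of push-forwards of structure sheaves of suitable torsors over $\frakX_n$, cut out by the character $k$, and then to transport the Gauss--Manin connection of $\calH$ onto $\bbW_k$.

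First I would construct $\frakw_k$. Over $\frakX_n$ the generalized elliptic curve $E$ carries a canonical subgroup $H_r$ of order $p^r$ by hypothesis; combining its Cartier dual with the Hodge--Tate map to $\underline{\omega}$ modulo a suitable power of $p$ singles out a rank-one $\calO_{\frakX_n}$-submodule of a torsion quotient of $\underline{\omega}$ that differs from $\underline{\omega}$ only by a power of $\Hdg$. Trivializing this submodule produces a torsor over $\frakX_n$ under a formal-group thickening of $\bbZ_p^\times$ whose character group contains every analytic weight of radius $r$, and I would let $\frakw_k$ be the subsheaf of the push-forward of its structure sheaf on which the structure group acts through $k$. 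Faithfully flat descent along the torsor then shows that $\frakw_k$ is invertible over $\calO_{\frakX_n} \hat{\otimes}_{\bbZ_p} \Lambda$; specialization at a classical weight recovers the classical modular sheaves over $\calX_n$, and via the usual presentation of overconvergent forms (as recalled in \cite{katz-type}) this identifies the sections over $\calX_n$ with $p$-adic families of overconvergent modular forms of weight $k$, giving (1).

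Next I would build $\bbW_k$. Starting from $\calH$ with its Hodge filtration $0 \to \underline{\omega} \to \calH \to \underline{\omega}^{-1} \to 0$, I would refine the previous torsor to the torsor of \emph{filtered} trivializations --- a generator of the $\underline{\omega}$-line together with a lift of a generator of the quotient --- which is a torsor under a formal thickening of the Borel subgroup $B \subset \GL_2$. Feeding the push-forward of its structure sheaf into the ``$k$-th principal series'' construction produces $\bbW_k$, and the order-of-vanishing filtration on the principal series induces the increasing filtration $(\Fil_i)_{i \geq 0}$ with $\Fil_0 = \frakw_k$ the highest-weight line. When $k$ is specialized to a classical weight $r \geq 1$ the inducing character becomes algebraic, the construction degenerates to the $r$-th symmetric power of the standard representation, and one obtains $\Fil_r \bbW_r \cong \calH^r$ over $\calX_n$ compatibly with the Hodge filtration; matching the integral structures is then a local check. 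This gives (2).

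For (3) I would extend the Gauss--Manin connection $\nabla \colon \calH \to \calH \otimes \Omega^1_X(\log C)$ to the symmetric powers of $\calH$ and hence to the interpolating torsor, writing it in a local basis of $\calH$ as an explicit first-order operator whose coefficients are logarithmic derivatives of the transition functions of the torsor. The point --- and the technical core of the statement --- is that this operator still makes sense on $\bbW_k$ once $\Hdg$ is inverted, i.e.\ that the power series it involves converge $p$-adically over $\frakX_n$; this is exactly where the radius-$r$ hypothesis on $k$ and the existence of the order-$p^r$ canonical subgroup are genuinely needed, and it produces $\nabla_k$ with poles supported on $V(\Hdg)$. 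Griffiths transversality is then inherited from the corresponding property on the symmetric powers of $\calH$ together with the compatibility of the filtration with the torsor structure, and composing with the Kodaira--Spencer isomorphism $\Omega^1_X(\log C) \xrightarrow{\sim} \underline{\omega}^2$ --- which shifts weights by $2$ --- lets one view $\nabla_k$ as a map $\bbW_k \to \bbW_{k+2}$. Everything outside this convergence estimate is formal manipulation with torsors, faithfully flat descent, and specialization, for which I would follow \cite[\S~3]{tripleL} verbatim.
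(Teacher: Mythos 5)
Your outline faithfully reproduces the construction that the paper itself does not prove but simply defers to \cite[\S~3.2--3.3]{tripleL}: the marked-section/torsor construction of $\frakw_k$ from the canonical subgroup and the Hodge--Tate map, its filtered refinement giving $\bbW_k$ with $\Fil_0 = \frakw_k$, and the convergence analysis that forces the poles of $\nabla_k$ along $V(\Hdg)$. This is the same route as the cited source, and you correctly isolate the one genuinely non-formal point, namely the $p$-adic convergence of the connection, which is exactly where the radius-$r$ hypothesis and the order-$p^r$ canonical subgroup are used.
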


\paragraph{The $q$-expansions.}
As explained in \cite[\S~3.5]{tripleL}, we have the $q$-expansion map
$$H^0(\frakX_n, \bbW_k) \rightarrow \Lambda((q))$$
obtained by first restricting to the ordinary locus then applying the unit root splitting.
On the other hand, the $q$-expansion map can also be described by using the Tate curve. 

Let $T= \mathrm{Tate}(q^N)$ be the Tate curve over $\Spf(S)$, where $S = \Lambda ((q))$.
We fix the canonical basis $(\omega_{\can}, \eta_{\can} = \nabla(\theta)(\omega_{\can}))$ of $\cal H$ on $T$.
Let $\bbW_k(q)$ be the module of $\bbW_k$ over the Tate curve, then we have the following local description: $\bbW_k(q) = S \langle \frac{Y}{1+p^r Z} \rangle (1 + p^r Z)^k$
(c.f. \cite[\S~3.5]{tripleL}).
If we set $V_{k, i} := Y^i (1+ p^r Z)^{k-i}$, then the filtration of $\bbW_k$ is given by $\Fil_h \bbW_k(q) = \sum_{i=0}^h S \cdot V_{k,i}$.
The $q$-expansion map then corresponds to the map $\sum_i a_i(q) V_{k, i} \mapsto a_0(q)$.

\begin{remark} \label{remark: specializaion of q-expansion}
Since $k$ is analytic, the element $k(1 +p^rZ)$ is well-defined in $S \langle Z \rangle$ and we sometimes denote it by $(1 +p^r Z)^k$.
The description above comes directly from the explicit construction of vector bundles with marked sections (c.f. \cite[\S~2]{tripleL}).
One should think of $1 +p^r Z$ (resp. $Y$) as an element that parametrizes the deformation of $\omega_{\can}$ (resp. $\eta_{\can})$.
In particular, the specialization of $V_{k, i}$ to a classical weight $\ell \in \bbZ_{> 0}$ can be identified as the element $\omega_{\can}^{\ell-i} \eta_{\can}^{i}$ of $\calH^\ell$ on the Tate curve when $i \leq \ell$.
\end{remark}

\paragraph{The operators $U$ and $V$.}
Similar to the classical case, we can define Hecke operators $U$ on $H^0(\calX_n, \bbW_k)$ and $V: H^0(\frakX_{n}, \frakw_k) \rightarrow H^0(\frakX_{n+1}, \frakw_k)$, whose actions on the $q$-expansions are the usual ones
(c.f. \cite[\S~3.6, \S~3.7]{tripleL}).
Moreover, $U$ is a compact operator.
In particular, one has the following proposition.
\begin{prop}[{\cite[Cor.~3.26]{tripleL}}]
    The operator $U$ on $H^0(\calX_n, \bbW_k)$ admits a Fredholm determinant $P(T) \in \Lambda \llbracket T \rrbracket$, and for any non-negative rational number $a$ the space $H^0(\calX_n, \bbW_k)$ admits a slope $a$-decomposition
    $$H^0(\calX_n, \bbW_k) = H^0(\calX_n, \bbW_k)^{\leq a} \oplus H^0(\calX_n, \bbW_k)^{>a}.$$
\end{prop}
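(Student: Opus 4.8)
The plan is to derive the slope decomposition from the general machinery of compact operators acting on orthonormalizable (or projective) Banach modules over an affinoid algebra, in the style of Coleman's "Classical and overconvergent modular forms" and the Ash--Stevens / Buzzard "Eigenvarieties" formalism. The key input, already stated in the excerpt, is that $U$ is a compact operator on $H^0(\calX_n, \bbW_k)$. So the first step is to record that $H^0(\calX_n, \bbW_k)$ is (the base change to the relevant affinoid of) a potentially orthonormalizable Banach module over $\Lambda$ — equivalently, that it is a direct summand of an orthonormalizable Banach module. This should follow from the explicit local description of $\bbW_k$ recalled just above: over the Tate curve $\bbW_k(q) = S\langle \tfrac{Y}{1+p^rZ}\rangle(1+p^rZ)^k$ is visibly a Tate algebra in one variable over $S = \Lambda((q))$, hence orthonormalizable, and globally one glues the finitely many such charts and checks (via the properness of $\frakX_n$ over $\Spf\Lambda$, or rather the quasi-compactness of $\calX_n$) that the cohomology is a summand of an orthonormalizable module. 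I would cite \cite[\S~3.6, \S~3.7]{tripleL} for the fact that $U$ is well-defined and compact, and then invoke the Riesz-theory package.

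Second, I would apply the Riesz theory of compact operators on (potentially) orthonormalizable Banach modules over $\Lambda$: a compact operator $U$ on such a module $M$ has a Fredholm determinant $P(T) = \det(1 - TU \mid M) \in \Lambda\llbracket T\rrbracket$, an entire power series in the appropriate sense, and for any non-negative rational $a$ one gets a factorization $P(T) = Q(T)\cdot R(T)$ where $Q$ is a polynomial whose roots have slope $\le a$ (a "multiplicative unit" away from slope $a$) and $R(T)$ has roots of slope $> a$, provided $P$ admits such a slope-$\le a$ factorization over $\Lambda$. Applying the theory of Riesz spaces / the kernel of $Q^*(U)$ versus its image then produces the direct sum decomposition
$$H^0(\calX_n, \bbW_k) = H^0(\calX_n, \bbW_k)^{\leq a} \oplus H^0(\calX_n, \bbW_k)^{>a},$$
with $H^0(\calX_n, \bbW_k)^{\leq a}$ finitely generated projective over $\Lambda$ and stable under $U$ with $U$ acting invertibly with slopes $\le a$, and $U$ acting with slopes $> a$ on the complement. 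The existence of the Fredholm determinant $P(T) \in \Lambda\llbracket T \rrbracket$ is exactly the first assertion of the proposition, so that part is immediate once compactness and orthonormalizability are in hand.

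The main obstacle is the subtlety that $\Lambda$ is only $p$-adically complete (a weight-space ring, typically an affinoid algebra or a power-bounded subring thereof), not a field, so the slope-$\le a$ factorization of the Fredholm determinant need not exist globally: it exists only after restricting to a suitable affinoid subdomain of $\Spa(\Lambda)$ (equivalently, after a finite cover, or after localizing so that the slope-$\le a$ part of the Newton polygon of $P$ has constant shape). The honest statement — and I expect this is what the cited \cite[Cor.~3.26]{tripleL} actually gives, so I would simply quote it — is that the decomposition exists locally on the weight space (over affinoids where $P$ admits a slope decomposition), and one should either phrase the proposition that way or fix $\Lambda$ from the start to be small enough (an affinoid over which the Coleman family $\bff$ lives, with $\bff_x$ of slope $a$) that the factorization is available. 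Concretely I would: (i) shrink $\Lambda$ if necessary so that $P(T)$ factors as $Q(T)R(T)$ with $Q$ of slope $\le a$ and $R$ invertible modulo slope $a$ in $\Lambda\langle T\rangle$; (ii) set $H^0(\calX_n,\bbW_k)^{\le a} := \ker Q^*(U)$ and $H^0(\calX_n,\bbW_k)^{>a} := \ker R^*(U)$ (image of $Q^*(U)$); (iii) check these are complementary using that $Q$ and $R$ are coprime, exactly as in Coleman. Everything beyond the factorization step is the standard Riesz-theory argument, which I would not reproduce in detail, referring instead to \cite{tripleL}.
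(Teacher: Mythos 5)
The paper does not actually prove this proposition: it is quoted verbatim from Andreatta--Iovita \cite[Cor.~3.26]{tripleL}, and the surrounding text explicitly declines to reproduce their arguments. So there is no in-paper proof to compare against, and your instinct to ultimately ``simply quote'' the cited corollary matches what the paper does. Your general plan (compactness of $U$, Fredholm determinant, slope factorization of $P(T)$ after shrinking the weight space, Riesz theory to split off $\ker Q^*(U)$) is the correct skeleton, and you are right to flag that the factorization only exists locally on weight space.

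There is, however, one genuine gap in the way you set up the Riesz theory. You assert that $H^0(\calX_n, \bbW_k)$ is (a summand of) an orthonormalizable Banach module, deducing this from the Tate-curve description $\bbW_k(q) = S\langle \tfrac{Y}{1+p^rZ}\rangle(1+p^rZ)^k$. Two problems. First, the Tate-curve module is only the completion at the cusp (the $q$-expansion module), not a chart of $\calX_n$, so it says nothing about global sections. Second, and more seriously, $\bbW_k$ is \emph{not} a coherent sheaf --- the paper itself points this out later (``the sheaves $\Fil_m\bbW_k$ and $\Fil_{m+1}\bbW_{k+2}$ are coherent, while the rest are not''). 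Consequently $H^0(\calX_n,\bbW_k)$ is an inductive limit of the Banach modules $H^0(\calX_n,\Fil_m\bbW_k)$ rather than a Banach module itself, and the classical Serre--Coleman Fredholm--Riesz theory for compact operators on orthonormalizable Banach modules does not apply off the shelf. The actual argument in \cite{tripleL} exploits the interaction of $U$ with the filtration (namely that $U$ preserves $\Fil_\bullet$ and acts with increasing $p$-divisibility on the graded pieces, so that for fixed slope $a$ the slope-$\leq a$ part is captured by a finite filtration step $\Fil_{m_a}$ --- this is exactly why the overconvergent projection $H^{\dagger,\leq a}$ in the paper comes with an integer $m_a$); alternatively one invokes the Ash--Stevens/Urban formalism of slope decompositions for modules that are merely (co)limits of Banach modules. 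Your write-up should either reduce to the coherent pieces $\Fil_m\bbW_k$ by this mechanism, or explicitly appeal to the more general slope-decomposition formalism, before running the Riesz-theory argument.
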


\begin{definition}
    Let $\bff \in H^0(\calX_n, \frakw_k)$. 
    We define $\bff^{[p]} := (1 - VU)\bff$ and call it the $p$-depletion of $\bff$.
    If $\bff(q) = \sum_{n=0}^\infty a_n(\bff) q^n V_{k, 0}$ is the $q$-expansion of $\bff$, then the $q$-expansion of $\bff^{[p]}$ is $\bff^{[p]}(q) = \sum_{n=0, \ p \nmid n}^\infty a_n(\bff) q^n V_{k, 0}$.
\end{definition}

\paragraph{$p$-adic iterations of the Gauss--Manin connection.}
Another important property of the sheaf $\bbW_k$ is that it is possible to iterate the Gauss--Manin connection $\nabla$ to certain $p$-adic powers and can be described explicitly in terms of $q$-expansions.

We first need an assumption on the weights.

\begin{assumption} \label{assumption: iteration}
Let $k$ and $s$ be two analytic weights for the same ring $\Lambda$ that satisfy
$$ k(t) = \chi(t) t^a \exp( u_k \log(t)), \quad s(t) = \chi'(t) t^b \exp ( u_s \log(t)) \quad \forall t \in \bbZ_p^\times$$
where
\begin{enumerate}
    \item[$\bullet$] $a, b \in \bbZ$;
    \item[$\bullet$] $\chi$ and $\chi'$ are finite character of $\bbZ_p^\times$, and $\chi$ is even;
    \item[$\bullet$] $u_k \in p\Lambda$ and $u_s \in p^2 \Lambda$.
\end{enumerate}
\end{assumption}
Under this assumption, we have the following theorem.
\begin{theorem}  [{\cite[Theorem~4.3]{tripleL}}] \label{theorem: p-adic connection}
Suppose $\bfg \in H^0(\frakX_n, \bbW_k)^{U=0}$ and let $\bfg(q) = \sum_h \bfg_h(q) V_{k, h}$ be its $q$-expansion. 
Then there are positive integers $b$ and $\gamma$ depending on $k, s, n$ and an element
$\nabla^{s}(\bfg) \in \mathrm{Hdg}^{-\gamma} H^0(\frakX_b, \bbW_{k+2s})$ such that 
$$ \nabla^s(\bfg)(q) := \sum_h \sum_{j=0}^{\infty} \binom{u_s}{j} \prod_{i=0}^{j-1} (u_k + u_ s -h -1 -i) \theta^{s-j}(\bfg_h(q)) V_{k+2s, j+h},$$
where $\theta = q \frac{d}{dq}$.
\end{theorem}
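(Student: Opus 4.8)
The plan is to build the operator $\nabla^s$ as a $p$-adic limit of the integer iterates $\nabla^m$ for $m \to s$ $p$-adically, and then identify the limit on $q$-expansions using the explicit action of $\nabla$ on the basis $V_{k,h}$. First I would record the action of the Gauss--Manin connection on the local model over the Tate curve: since $\nabla(\omega_{\can}) = \eta_{\can}$ and $\nabla(\eta_{\can})$ is computed from the unit-root splitting (it lies in $\Fil_0$ up to $\Hdg$-powers), one gets a formula of the shape $\nabla(a(q) V_{k,h}) = \theta(a(q)) V_{k+2,h} + (\text{linear in } u_k, h)\, a(q) V_{k+2, h+1}$ after composing with Kodaira--Spencer; here the hypothesis $\bfg \in H^0(\frakX_n, \bbW_k)^{U=0}$ is what lets $\theta$ act nicely (its iterates stay $p$-integral up to controlled $\Hdg$-denominators because the $q$-expansion is supported on $p\nmid n$). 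Iterating $\nabla$ an integer number $m$ of times and collecting coefficients is a finite combinatorial computation producing
$$\nabla^m(\bfg)(q) = \sum_h \sum_{j=0}^{m} \binom{m}{j} \prod_{i=0}^{j-1}(u_k + m - h - 1 - i)\, \theta^{m-j}(\bfg_h(q)) V_{k+2m, j+h},$$
which one checks by induction on $m$.

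Next I would interpolate in $m$. The factor $\binom{m}{j}\prod_{i=0}^{j-1}(u_k+m-h-1-i)$ is a polynomial in $m$ (for fixed $j$), hence extends to any analytic weight $s$ by substituting $m \rightsquigarrow u_s$ in the appropriate slots, giving the asserted coefficient $\binom{u_s}{j}\prod_{i=0}^{j-1}(u_k+u_s-h-1-i)$; the sum over $j$ now runs to $\infty$, and the content is that it converges in $\Hdg^{-\gamma}H^0(\frakX_b, \bbW_{k+2s})$ for suitable $b, \gamma$. Convergence is where I would spend the real effort: one needs (i) $p$-adic estimates on $\binom{u_s}{j}$ — using $u_s \in p^2\Lambda$ so that $v_p\big(\binom{u_s}{j}\big)$ grows roughly like $2j - v_p(j!) \gtrsim j$ — against (ii) the growth in $j$ of the $\Hdg$-denominators incurred by $\theta^{s-j}$ and by the change of basis/overconvergence radius when passing from $\frakX_n$ to $\frakX_b$. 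The condition $u_k \in p\Lambda$ controls the product $\prod_i(u_k+u_s-h-1-i)$ (its $p$-adic size is governed by the integer part $-h-1-i$, so it contributes at worst a bounded factor per step), and $\chi$ even is a parity/consistency constraint on the weight that ensures $\bbW_{k+2s}$ is the correct target. Assembling these estimates shows the tail $\sum_{j \geq J}$ lies in $p^{J}\Hdg^{-\gamma}H^0(\frakX_b,\bbW_{k+2s})$ for some fixed $\gamma$ and $b$ depending only on $k,s,n$, so the series defines a genuine section; that the resulting object is independent of the choices and agrees with $\lim \nabla^{m}$ along any sequence $m \to s$ is then formal.

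The main obstacle is precisely the simultaneous bookkeeping in step two: matching the $p$-adic decay of the binomial/Pochhammer coefficients against the blow-up of $\Hdg$-poles as $j$ grows, while keeping $b$ and $\gamma$ uniform. This is the technical heart of \cite[\S 4]{tripleL}, and rather than reproduce it I would cite Theorem \ref{theorem: p-adic connection} as stated; the only thing I would verify in detail for our purposes is the explicit $q$-expansion formula on the nose, since that — together with the interpolation property $\nabla^{s}\circ\nabla^{s'} = \nabla^{s+s'}$ it immediately yields — is all that enters the definition of $\scrL_p^f(\bff,\bfg,\bfh)$ and the Gross--Zagier computation in \S\ref{section: L-function}.
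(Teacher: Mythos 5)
The paper does not prove this statement: it is imported verbatim from \cite[Theorem~4.3]{tripleL}, and the surrounding text explicitly declines to reproduce the construction. Your proposal is therefore consistent with the paper's treatment, and your sketch correctly captures the strategy of the cited proof: the integer-iterate formula you write down is right (it follows by induction from the recorded identity $\nabla_k(aV_{k,i})=\theta(a)V_{k+2,i}+a(u_k-i)V_{k+2,i+1}$, remembering that $u_{k+2}=u_k+2$ at each step, which is exactly where the shift $u_k+u_s-h-1-i$ comes from), and the role of $U=0$ in making $\theta^{s-j}$ well defined on $p$-depleted $q$-expansions is correctly identified.

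Two inaccuracies in the parts you do spell out. First, on the Tate curve $\nabla(\eta_{\can})=0$ exactly (as recorded in \S 3.1); the unit-root splitting and the $\Hdg$-denominators enter only when extending off the ordinary locus, not in the local $q$-expansion computation. Second, your convergence heuristic for $\binom{u_s}{j}$ is wrong: since $u_s-i$ is a unit whenever $p\nmid i$, one has only $v_p\bigl(\binom{u_s}{j}\bigr)\geq 2-v_p(j)-\sum_{i<j}\max(0,v_p(i)-2)$, which does \emph{not} grow like $j$ and can even drift (slowly) to $-\infty$. The actual convergence in \cite[\S 4]{tripleL} is not won by decay of these coefficients alone but by the Banach-module structure of $\bbW_{k+2s}(q)=S\langle Y/(1+p^rZ)\rangle(1+p^rZ)^{k+2s}$ and the $\Hdg$-power bookkeeping, which is precisely why the hypotheses $u_k\in p\Lambda$, $u_s\in p^2\Lambda$ of Assumption \ref{assumption: iteration} are imposed. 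Since you defer to the citation for exactly this step, neither point invalidates your proposal, but the stated estimate should not be presented as the reason the series converges.
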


\begin{remark}
    Let $\gamma :R \rightarrow \bbZ_p$ be a homomorphism such that $\gamma \circ s : \bbZ_p^\times \rightarrow \bbZ_p^\times$ is a positive classical weight $\ell$.
    Then the specialization of $\nabla^s(\bfg)$ via $\gamma$ is $\nabla^\ell (\bfg)$, the usual $\ell$-th iteration of $\nabla$. 
\end{remark}
\begin{remark}
    The requirement that $u_k$ and $u_s$ are divisible by powers of $p$ is for convergence reason. 
    We should mention that an improved version is developed by Ananyo Kazi in his Ph.D. thesis \cite{overconvegent_sheaves}, which slightly releases the conditions.
\end{remark}

\paragraph{The overconvergent projections.}
We have seen how to define $\nabla^s(\bfg^{[p]})$ for a suitable weight $s$. 
However, it is only a section of $\bbW_{k+2s}$ instead of $\frakw_{k +2s}$.
In order to define the triple product $p$-adic $L$-function via Petersson product of families of overconvergent modular forms, we need the overconvergent projection, which sends $\nabla^s(\bfg^{[p]})$ to a certain section of $\frakw_{k+2s}$.

We first consider the following commutative diagram of sheaves on $\calX_n$
$$
\begin{tikzcd}
0 \arrow[r] &\Fil_m(\bbW_k) \arrow[d, "\nabla_k"] \arrow[r] &\bbW_k \arrow[d, "\nabla_k"] \arrow[r] & \bbW_k / \Fil_m (\bbW_k) \arrow[d, "\nabla_k"] \arrow[r] &0 \\
0 \arrow[r] &\Fil_{m+1}(\bbW_{k+2}) \arrow[r] &\bbW_{k+2} \arrow[r] & \bbW_{k+2} / \Fil_{m+1} (\bbW_{k+2}) \arrow[r] &0
\end{tikzcd}
$$
and view the columns as complexes $\Fil_m^\bullet \bbW_k$, $\bbW_k^\bullet$ and $(\bbW_k / \Fil^m)^\bullet$ respectively.

Note that the sheaves $\Fil_m \bbW_k$ and $\Fil_{m+1} \bbW_{k+2}$ are coherent, while the rest are not.
So the hypercohomology of $\Fil_m^\bullet \bbW_k$ on the Stein space $\calX_n$ can be computed by  the cohomology of the complex of global sections.
That is, for $i \geq 0$, we have
$$H^i_{\dR}(\calX_n, \Fil_m^\bullet \bbW_k) := \mathbb{H}^i (\calX_n, \Fil_m^\bullet \bbW_k) = H^i ( H^0(\calX_n, \Fil_m \bbW_k) \xrightarrow{\nabla_k} H^0(\calX_n, \Fil_{m+1} \bbW_{k+2})). $$

\begin{theorem}[{\cite[Lemma~3.32]{tripleL}}]
    For any $m \in \bbN$, there is an isomorphism
    \begin{equation*}
        H^\dagger_m : H^1_{\dR}(\calX_n, \Fil_m^\bullet \bbW_k)\otimes_{\Lambda} \Lambda\left [ \prod_{i=0}^{m} (u_k -i) \right ] 
        \cong H^0(\calX_n, \frakw_{k+2})\otimes_{\Lambda} \Lambda\left [ \prod_{i=0}^{m} (u_k -i) \right ].
    \end{equation*}
\end{theorem}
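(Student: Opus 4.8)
The plan is to realize $H^1_{\dR}(\calX_n, \Fil_m^\bullet \bbW_k)$ as a cokernel and then project it onto $H^0(\calX_n, \frakw_{k+2})$ one filtration step at a time. As recalled just above the statement, $\calX_n$ is Stein and $\Fil_m \bbW_k$, $\Fil_{m+1}\bbW_{k+2}$ are coherent, so
\[
H^1_{\dR}(\calX_n, \Fil_m^\bullet \bbW_k) \;=\; \operatorname{coker}\!\bigl(H^0(\calX_n, \Fil_m \bbW_k) \xrightarrow{\ \nabla_k\ } H^0(\calX_n, \Fil_{m+1}\bbW_{k+2})\bigr).
\]
The inclusion $\frakw_{k+2} = \Fil_0 \bbW_{k+2} \hookrightarrow \Fil_{m+1}\bbW_{k+2}$ induces a natural map
\[
\iota \colon H^0(\calX_n, \frakw_{k+2}) \longrightarrow H^1_{\dR}(\calX_n, \Fil_m^\bullet \bbW_k),
\]
and I would show that $\iota$ becomes an isomorphism after inverting $\prod_{i=0}^{m}(u_k - i)$; then $H^\dagger_m := \iota^{-1}$.

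To study $\iota$ I would exploit the filtrations on $\bbW_k$ and $\bbW_{k+2}$. By Griffiths transversality $\nabla_k$ carries $\Fil_h \bbW_k$ into $\Fil_{h+1}\bbW_{k+2}$, so the two-term complex above is filtered, with $h$-th associated graded complex ($0 \le h \le m$)
\[
H^0(\calX_n, \operatorname{gr}_h \bbW_k) \xrightarrow{\ \overline{\nabla}_k\ } H^0(\calX_n, \operatorname{gr}_{h+1}\bbW_{k+2}), \qquad \operatorname{gr}_h := \Fil_h / \Fil_{h-1},
\]
while the bottom piece $\operatorname{gr}_0 \bbW_{k+2} = \frakw_{k+2}$ of the target receives nothing. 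Since each $\operatorname{gr}_h \bbW_k$ is invertible over $\calO_{\calX_n}\hat{\otimes}\Lambda$ and $\calX_n$ is Stein, global sections of $0 \to \Fil_{h-1} \to \Fil_h \to \operatorname{gr}_h \to 0$ are exact, so everything reduces to the maps $\overline{\nabla}_k$.

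The heart of the matter---and the step I expect to be the main obstacle---is the explicit identification of $\overline{\nabla}_k$. As the symbol of a Griffiths-transverse connection it is $\calO_{\calX_n}\hat{\otimes}\Lambda$-linear, hence multiplication by a section of the invertible sheaf $\mathcal{H}om(\operatorname{gr}_h\bbW_k, \operatorname{gr}_{h+1}\bbW_{k+2})$; I claim this section is $(u_k - h)$ up to a unit. Over the Tate curve this is the identity $\nabla_k\bigl(\mathbf{a}(q)\,V_{k,h}\bigr) = \theta(\mathbf{a}(q))\,V_{k+2,h} + (u_k - h)\,\mathbf{a}(q)\,V_{k+2,h+1}$, read off from the description of $(\bbW_k,\nabla_k)$ on $q$-expansions (compare the $s = 1$ case of Theorem~\ref{theorem: p-adic connection}, where $u_s = 1$ and only the terms $j = 0, 1$ survive, contributing $\theta(\mathbf{a}(q))V_{k+2,h}$ and $(u_k-h)\mathbf{a}(q)V_{k+2,h+1}$). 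Specialized to a classical weight $k = r$ it recovers the familiar statement that Gauss--Manin acts on $\operatorname{gr}_h \calH^r \cong \underline\omega^{\,r-2h}$ by the scalar $r - h$.

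Granting this, I would assemble the isomorphism. After inverting $\prod_{i=0}^{m}(u_k-i)$ every $\overline{\nabla}_k$ with $0 \le h \le m$ becomes an isomorphism. Hence, given a class in $H^1_{\dR}$ represented by $\xi \in H^0(\calX_n, \Fil_{m+1}\bbW_{k+2})$, one successively subtracts $\nabla_k$ of suitable global sections of $\Fil_m\bbW_k$ so as to annihilate the components of $\xi$ in $\operatorname{gr}_{m+1}, \operatorname{gr}_m, \dots, \operatorname{gr}_1$ of $\bbW_{k+2}$---dividing by $u_k - m,\ u_k-(m-1),\ \dots,\ u_k$ in turn---until the remainder lies in $\Fil_0\bbW_{k+2} = \frakw_{k+2}$; that remainder is $H^\dagger_m$ of the class. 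The same graded computation shows $\nabla_k$ is injective on $H^0(\calX_n, \Fil_m\bbW_k)$ after inverting $\prod_{i=0}^{m}(u_k-i)$ (peel off the top graded component, which is divisible by $u_k-m$, and induct downward), and this gives simultaneously that $H^\dagger_m$ is well defined---independent of the representative $\xi$ and of the antiderivatives chosen---and that it is injective; surjectivity is clear since any section of $\frakw_{k+2}$ represents its own class. This exhibits the asserted isomorphism.
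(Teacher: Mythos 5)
Your argument is correct and is essentially the intended one: the paper only cites this statement from Andreatta--Iovita (\cite[Lemma~3.32]{tripleL}), whose proof is exactly this d\'evissage along the filtration, using Griffiths transversality and the fact that the graded symbol of $\nabla_k$ on $\operatorname{gr}_h$ is multiplication by $u_k-h$ (the identity $\nabla_k(aV_{k,i})=\theta(a)V_{k+2,i}+a(u_k-i)V_{k+2,i+1}$ recorded before Proposition \ref{prop: overconvergent projection}), so that inverting $\prod_{i=0}^m(u_k-i)$ makes all graded maps isomorphisms and leaves $\frakw_{k+2}=\Fil_0\bbW_{k+2}$ as the cokernel. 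The only point worth flagging is that the graded identification may a priori involve powers of $\Hdg$ (the connection has poles along $V(\Hdg)$), but these are units on the generic fiber $\calX_n$, so your reduction is unaffected.
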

\begin{theorem}[{\cite[Theorem~3.34]{tripleL}}]
    Fix a positive slope $a$.
    Then we have an integer $m_a \in \bbN$ and an isomorphism
    \begin{equation*}
        H^{\dagger, \leq a} : H^1_{\dR}(\calX_n, \bbW_k)^{\leq a} \otimes_{\Lambda} \Lambda \left [ \prod_{i=0}^{m_a} (u_k -i) \right ] 
        \cong H^0(\calX_n, \frakw_{k+2})^{\leq a} \otimes_{\Lambda} \Lambda \left [ \prod_{i=0}^{m_a} (u_k -i) \right ]
    \end{equation*}
    where the superscript $\leq a$ means the slope $\leq a$ subspace with respect to the $U$ operator.
\end{theorem}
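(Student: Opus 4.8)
The plan is to bootstrap from the previous theorem, which identifies $H^1_{\dR}(\calX_n,\Fil_m^\bullet\bbW_k)$ with $H^0(\calX_n,\frakw_{k+2})$ after inverting $\prod_{i=0}^m(u_k-i)$, by comparing the \emph{full} de Rham complex $\bbW_k^\bullet$ with its truncation $\Fil_{m_a}^\bullet\bbW_k$ on slope $\le a$ parts, for a single $m_a$ depending only on $a$. Concretely, take $m_a$ to be any integer with $m_a\ge a$ and consider the long exact hypercohomology sequence attached to the short exact sequence of two-term complexes
\[
0\longrightarrow \Fil_{m_a}^\bullet\bbW_k\longrightarrow \bbW_k^\bullet\longrightarrow (\bbW_k/\Fil^{m_a})^\bullet\longrightarrow 0
\]
coming from the commutative diagram of complexes on $\calX_n$ recalled above. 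Since $\calX_n$ is Stein, the hypercohomology of all three complexes is computed by their complexes of global sections; the compact operator $U$ acts on the whole diagram, and the slope $\le a$ decomposition of \cite[\S~3.6--3.7]{tripleL} is exact and functorial, so after applying $(-)^{\le a}$ it suffices to show that $\mathbb{H}^0$ and $\mathbb{H}^1$ of the quotient complex $(\bbW_k/\Fil^{m_a})^\bullet$ vanish on slope $\le a$.

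Since $\mathbb{H}^0$ of the quotient complex is a submodule of $H^0(\calX_n,\bbW_k/\Fil_{m_a}\bbW_k)$ and $\mathbb{H}^1$ a quotient of $H^0(\calX_n,\bbW_{k+2}/\Fil_{m_a+1}\bbW_{k+2})$, this reduces to the assertion that $H^0(\calX_n,\bbW_k/\Fil_m\bbW_k)^{\le a}=0$ whenever $m\ge a$. This is the arithmetic heart of the argument, and I expect it to be the main obstacle: one has to extract from the canonical-subgroup description of $U$ and the marked-section construction of $\bbW_k$ that, in the local chart on the Tate curve where $V_{k,i}=Y^i(1+p^rZ)^{k-i}$, the operator $U$ raises the $Y$-degree only weakly and carries a factor divisible by $p^i$ on the component of $Y$-degree $i$ (the contribution of the quotient isogeny in the $\eta_{\can}$-direction). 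Hence $U$ on $\bbW_k/\Fil_m\bbW_k$ is divisible by $p^{m+1}$ up to a bounded power of $\Hdg$, which is invertible on $\calX_n$; the Fredholm determinant of $U$ on this quotient then has all Newton slopes $>m\ge a$, so its slope $\le a$ part vanishes.

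Granting this vanishing, the long exact sequence yields $\mathbb{H}^1(\calX_n,\Fil_{m_a}^\bullet\bbW_k)^{\le a}\xrightarrow{\ \sim\ }\mathbb{H}^1(\calX_n,\bbW_k^\bullet)^{\le a}=H^1_{\dR}(\calX_n,\bbW_k)^{\le a}$. I would then base-change along $\Lambda\to\Lambda[\prod_{i=0}^{m_a}(u_k-i)]$, restrict the isomorphism $H^\dagger_{m_a}$ of the previous theorem to slope $\le a$ parts, and define $H^{\dagger,\le a}$ as the composite of these two isomorphisms. The one remaining point, which is carried out in \cite{tripleL}, is that $H^\dagger_{m_a}$ intertwines the $U$-actions on its source and target up to the normalization built into its construction, so that it respects the slope $\le a$ decompositions; the integer $m_a$ so produced depends only on $a$ (through the bound $m_a\ge a$) and not on $k$, as required.
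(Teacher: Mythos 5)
This theorem is imported from \cite[Theorem~3.34]{tripleL} and the paper contains no proof of its own, so the comparison is with the argument in that reference: your strategy --- truncate the de Rham complex at $\Fil_{m_a}$, kill the slope $\le a$ part of the quotient complex by showing that $U$ on $\bbW_k/\Fil_m\bbW_k$ is divisible by $p^{m+1}$ up to powers of $\Hdg$, and then compose with the isomorphism $H^\dagger_{m_a}$ of the preceding theorem --- is essentially the one used in \emph{loc.\ cit.} The only imprecision is the bound ``$m_a\ge a$'': since $U$ on $\mathrm{gr}_h\bbW_k$ is divisible only by $p^h$ times a negative power of $\Hdg$, whose valuation is bounded but nonzero on $\calX_n$, one must take $m_a$ sufficiently large relative to $a$ (and to $n$) rather than literally $m_a\ge a$; you flag this caveat yourself and it does not affect the structure of the argument.
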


The maps $H^\dagger_m$ and $H^\dagger$ above are referred as the overconvergent projections.

\begin{remark}
    When specialized to a classical weight $\ell \in \bbZ$, one can see that the map $H^\dagger_{m}$ for $m= \ell$ coincides with the overconvergent projection defined in \cite[\S~3.2]{nocforms}.
\end{remark}

We shall now give the description of the map $H^\dagger_m$ on the $q$-expansions, which is an important tool for later computations.
Recall that on $\bbW_k(q)$, we have the identity
\begin{equation*}
    \nabla_k (a V_{k, i}) = \theta(a) V_{k+2, i} + a(u_k -i) V_{k+2, h+1}, \quad a \in \Lambda((q)).
\end{equation*}

By definition, the overconvergent projection of a class $[ \gamma ]$ for $\gamma \in H^0(\calX_{n}, \Fil_{m+1}\bbW_{k+2})$ is the unique element in $H^0(\calX_{n}, \frakw_{k+2})$ that represents the same class modulo $\nabla( H^0(\calX_{n}, \Fil_{m}\bbW_{k}))$.
Hence, after shifting the numbers, we have the following result.

\begin{prop}  [{\cite[Prop.~3.37]{tripleL}}] \label{prop: overconvergent projection}
Consider an element $\gamma \in H^0(\calX_{n}, \Fil_m \bbW_k)$ with its class $[\gamma ] \in  H^1_{\dR}(\calX_{n}, \Fil_{m-1}\bbW_{k-2}^\bullet)$.
Let $\gamma(q) = \sum_{i=0}^m \gamma_i(q) V_{k, i}$ be its $q$-expansion.
Then the $q$-expansion of $H^\dagger([\gamma]) \in H^0(\calX_n, \frakw_{k})$ is 
$$\sum_{i =0}^m (-1)^i \frac{\theta^i \gamma_i(q)}{(u_{k} -2 -i +1)(u_{k}-2 -i +2) \cdots (u_{k}-2)} V_{k,0}.$$
\end{prop}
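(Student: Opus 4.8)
The plan is to unwind the definition of $H^\dagger$ as the section of $\frakw_k$ representing the same de Rham class as $\gamma$ modulo the image of $\nabla$, and then solve the resulting recursion on $q$-expansions term by term. Concretely, I would proceed as follows. Given $\gamma = \sum_{i=0}^m \gamma_i(q) V_{k,i}$ in $H^0(\calX_n,\Fil_m\bbW_k)$, I want to produce $\delta \in H^0(\calX_n,\Fil_{m-1}\bbW_{k-2})$ such that $\gamma - \nabla_{k-2}(\delta)$ has only a $V_{k,0}$-component; that surviving component is then, by definition, the $q$-expansion of $H^\dagger([\gamma])$ (up to the denominator factors that become units after the base change of the preceding theorem). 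Using the basic identity
$$\nabla_{k-2}(a V_{k-2,i}) = \theta(a) V_{k,i} + a\,(u_{k-2}-i)\,V_{k,i+1} = \theta(a)V_{k,i} + a\,(u_k - 2 - i)\,V_{k,i+1},$$
I would write $\delta(q) = \sum_{i=0}^{m-1}\delta_i(q)V_{k-2,i}$ and compute $\nabla_{k-2}(\delta)$ componentwise, matching coefficients of $V_{k,i}$ from the top index $i=m$ downward.

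The recursion works from the top: to kill the $V_{k,m}$-term $\gamma_m(q)V_{k,m}$, one needs $\delta_{m-1}(q)(u_k-2-(m-1)) = -\gamma_m(q)$, hence $\delta_{m-1}(q) = -\gamma_m(q)/(u_k-2-m+1)$; but this choice simultaneously contributes $\theta(\delta_{m-1})V_{k,m-1}$ to the $V_{k,m-1}$-slot, which must itself be cancelled by $\delta_{m-2}$, and so on. Iterating, $\delta_{m-1-j}$ acquires a factor $\theta^j$ applied to $\gamma_m$ divided by a descending product of the $(u_k - 2 - \ast)$ factors, with an alternating sign from each step; more precisely each $\gamma_i$ for $i\ge 1$ eventually feeds into the $V_{k,0}$-slot after $i$ applications of $\theta$ and division by $(u_k-2-i+1)(u_k-2-i+2)\cdots(u_k-2)$, picking up $(-1)^i$. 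Summing the contributions of all the $\gamma_i$ into the $V_{k,0}$-component gives exactly
$$\sum_{i=0}^m (-1)^i\,\frac{\theta^i\gamma_i(q)}{(u_k-2-i+1)(u_k-2-i+2)\cdots(u_k-2)}V_{k,0},$$
which is the claimed formula (the $i=0$ term being just $\gamma_0(q)V_{k,0}$, with empty product $1$). I would also note that $\delta$ lies in $\Fil_{m-1}\bbW_{k-2}$ as required, since its highest index is $m-1$, and that all the denominators $(u_k-2-j)$ are exactly the factors inverted in the statement of the overconvergent projection theorem, so the computation takes place after the indicated base change.

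The only genuinely delicate point — rather than the bookkeeping — is checking that this formal solution of the recursion actually represents the overconvergent projection, i.e. that the complementary subspace picked out by $H^\dagger$ is really "the $V_{k,0}$-line modulo $\nabla(\Fil_{m-1})$" on the level of global sections, and that the $\delta$ produced is a genuine global section over the Stein space $\calX_n$ (its $q$-expansion coefficients converge appropriately, after clearing $\Hdg$-powers). For this I would lean on the hypercohomology description recalled just before the proposition — that $H^1_{\dR}(\calX_n,\Fil_m^\bullet\bbW_k)$ is computed by the two-term complex of global sections because the relevant $\Fil$-pieces are coherent on the Stein space — so that existence and uniqueness of the representative is already guaranteed by the cited isomorphism $H^\dagger_m$, and my recursion merely exhibits it explicitly on $q$-expansions. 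Thus the argument is essentially a direct verification; the main obstacle is simply organizing the telescoping sum and the indexing shift ($k \mapsto k-2$, $m \mapsto m-1$) carefully enough that the product in the denominator comes out as stated.
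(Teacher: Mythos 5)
Your proposal is correct and follows exactly the route the paper indicates: the displayed identity $\nabla_{k-2}(aV_{k-2,i})=\theta(a)V_{k,i}+a(u_k-2-i)V_{k,i+1}$ together with the characterization of $H^\dagger([\gamma])$ as the unique $V_{k,0}$-representative modulo $\nabla(H^0(\calX_n,\Fil_{m-1}\bbW_{k-2}))$, solved by downward recursion on the index $i$ (the paper itself defers the details to the cited Prop.~3.37 of the reference). Your bookkeeping of the signs and the descending product in the denominator checks out, and your appeal to the preceding isomorphism $H^\dagger_m$ for existence/uniqueness after inverting the factors $u_k-2-j$ is the right way to justify that the formal recursion computes the actual projection.
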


\subsection{Definition of the triple product \texorpdfstring{$p$}{}-adic \texorpdfstring{$L$}{}-function}

We are now able to define triple product $p$-adic $L$-functions for finite slope families.
We first recall some results in the complex case.
For any newform $f$ of level $N_f$ and nebentypus $\chi_f$, we write $\bbQ_f$ for the number field generated by all Hecke eigenvalues of $f$.
We denote by $\pi_f$ the automorphic representation of $\GL_2(\mathbb{A}_\bbQ)$ generated by $f$.
If $N$ is a multiple of $N_f$ and $\bbQ_f \subset L$, then we let $S_k(N, L)[\pi_f]$ be the $f$-isotypic subspace of $S_k(N, L)$.

Now let $f \in S_k (N_f, \chi_f)$, $g \in S_\ell (N_g, \chi_g)$, $h \in S_m (N_h, \chi_h)$ be a triple of normalized primitive cuspidal eigenforms.
We set $N = \lcm(N_f, N_g, N_h)$ and $\bbQ_{f, g, h} = \bbQ_f \cdot \bbQ_g \cdot \bbQ_h$.
We assume that $\chi_f \cdot \chi_g \cdot \chi_h =1$ and the triple $(k, \ell, m)$ is unbalanced with $k = \ell +m +2t'$ for some $t' \in \bbZ_{\geq 0}$.
In this situation, we have the following result of M. Harris and S. Kulda \cite{central-crit}, refined by A. Ichino \cite{trilinear} and T. C. Watson \cite{rankin-triple}, which is usual known as the Ichino formula.

\begin{theorem}\label{theorem: Ichino}
Let $f, g, h$ be as above, then there exist holomorphic modular forms
$$\breve{f} \in S_k(N, \bbQ_{f, g, h})[\pi_f], \ \breve{g} \in S_k(N, \bbQ_{f, g, h})[\pi_g], \ \breve{h} \in S_k(N, \bbQ_{f, g, h})[\pi_h]$$
and constants $C_q \in \bbQ_{f, g, h}$ depending only on the local components of $\breve{f}, \breve{g}, \breve{h}$ for all $q \mid N\infty$ such that
$$ \frac{\prod_q C_q}{\pi^{2k}} L \left ( \breve{f}, \breve{g}, \breve{h}; \frac{k+l+m-2 }{2} \right ) = |I(\breve{f}, \breve{g}, \breve{h})|^2.$$ 
Moreover, there exists a choice of $\breve{f}, \breve{g}, \breve{h}$ such that all $C_q \neq 0$.
\end{theorem}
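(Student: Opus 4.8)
The plan is to obtain the theorem from the adelic form of the Ichino formula, together with Watson's explicit evaluation of the archimedean local integral and a local nonvanishing argument at the primes dividing $N$; I will only indicate the structure, since the analytic heart of the matter is contained in \cite{trilinear} and \cite{rankin-triple}.

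First I would pass to the adelic setting: let $\pi_f,\pi_g,\pi_h$ be the unitary cuspidal automorphic representations of $\GL_2(\mathbb{A}_\bbQ)$ generated by $f,g,h$, with fixed restricted tensor decompositions $\pi_\bullet = \bigotimes_v \pi_{\bullet,v}$; the hypothesis $\chi_f\chi_g\chi_h = 1$ makes the product of their central characters trivial, so the global trilinear period below is well defined on $[\mathrm{PGL}_2]$. The refined Ichino formula of \cite{trilinear} states that for decomposable vectors $\phi_\bullet = \bigotimes_v \phi_{\bullet,v}$ one has
\[
\frac{\bigl|I(\phi_f,\phi_g,\phi_h)\bigr|^2}{\langle\phi_f,\phi_f\rangle\langle\phi_g,\phi_g\rangle\langle\phi_h,\phi_h\rangle}
= \mathcal{C}\cdot\frac{L\!\left(\tfrac12,\pi_f\times\pi_g\times\pi_h\right)}{L(1,\pi_f,\mathrm{Ad})\,L(1,\pi_g,\mathrm{Ad})\,L(1,\pi_h,\mathrm{Ad})}\prod_v I_v^\ast,
\]
where $I(-,-,-)$ is the global trilinear period over $[\mathrm{PGL}_2]$, $\mathcal{C}$ is an explicit nonzero rational constant (a power of $2$ times $\zeta_\bbQ(2)^2$), and $I_v^\ast$ is the normalized local trilinear integral, which equals $1$ for every $v\nmid N\infty$ once the $\phi_{\bullet,v}$ are taken spherical there.

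Second, I would fix the local vectors. At $v=q\nmid N$ take the spherical vectors. At $v=\infty$ take the holomorphic lowest weight vector for the form of dominant weight $k$, and weight-$k$ vectors for the two forms of smaller weight --- these are images of the raising operator and correspond classically to Shimura--Maass derivatives $\delta^{t'}$ of holomorphic forms; Watson's computation \cite{rankin-triple} then evaluates the archimedean factor as an explicit nonzero rational multiple of $\pi^{-2k}$ times elementary Gamma factors, and it is exactly this $\pi^{-2k}$ that accounts for the normalization $\pi^{2k}$ appearing in the theorem. At the ramified primes $q\mid N$ I would choose, for each $q$, local vectors $\phi_{\bullet,q}$ and let $C_q$ be the corresponding normalized local trilinear integral times the local Euler factors at $q$ of the three adjoint $L$-functions; by construction $C_q$ depends only on the local components $\pi_{\bullet,q}$ and the chosen vectors. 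Translating these adelic vectors back into classical modular forms $\breve f,\breve g,\breve h \in S_k(N,\overline{\bbQ})[\pi_\bullet]$ --- after enlarging $N$ if needed by the conductors of the local choices, which may be taken supported on primes dividing $\lcm(N_f,N_g,N_h)$ --- and matching adelic and classical Petersson normalizations then yields an identity of the stated shape.

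The step I expect to be the main obstacle is the joint control, at the ramified primes, of nonvanishing and of fields of rationality. For nonvanishing: the local trilinear Hom-space $\Hom_{\GL_2(\bbQ_q)}(\pi_{f,q}\otimes\pi_{g,q}\otimes\pi_{h,q},\bbC)$ is at most one-dimensional by Prasad's local theory and is nonzero here because the global period does not vanish identically; since $I_q^\ast$ is the squared modulus of the canonical generator of this space evaluated on $\phi_{f,q}\otimes\phi_{g,q}\otimes\phi_{h,q}$, a suitable (in general non-new) choice of local vectors makes $C_q\neq 0$, and this can be arranged simultaneously at the finitely many $q\mid N$. For rationality: the $\pi_\bullet$-isotypic subspaces carry $\bbQ_\bullet$-rational structures, the local integrals are integrals over compact groups of matrix coefficients defined over $\bbQ_{f,g,h}$ and therefore lie in $\bbQ_{f,g,h}$, and the transcendental part of the triple-product period is precisely $\pi^{2k}$ by the same algebraicity input used above for the archimedean factor (cf.\ \cite{DR}); combining these forces $C_q\in\bbQ_{f,g,h}$ and $\breve f,\breve g,\breve h$ defined over $\bbQ_{f,g,h}$. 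The closing assertion that $\breve f,\breve g,\breve h$ can be chosen with all $C_q\neq 0$ is then exactly the local nonvanishing just discussed.
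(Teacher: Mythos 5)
The paper does not prove this statement at all: it is quoted as a known result of Harris--Kudla, refined by Ichino and Watson (and its classical reformulation is essentially \cite[Theorem~4.7]{DR}), so there is no internal proof to compare against. Your outline --- adelic Ichino formula, spherical vectors away from $N\infty$, Watson's archimedean evaluation producing the $\pi^{-2k}$, local test vectors at $q\mid N$ packaged into the constants $C_q$, and rationality of the local integrals --- is the standard derivation and is structurally sound.

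There is, however, one genuine gap in your argument for the final clause. You justify the nonvanishing of the local trilinear functional $\Hom_{\GL_2(\bbQ_q)}(\pi_{f,q}\otimes\pi_{g,q}\otimes\pi_{h,q},\bbC)$ at $q\mid N$ by saying it ``is nonzero here because the global period does not vanish identically.'' This is circular and, taken literally, false: the global period may well vanish identically for every choice of vectors (it does whenever the central $L$-value vanishes, which is precisely the situation of interest in the balanced range), and nonvanishing of the global period is what one is trying to control, not an available hypothesis. The correct input is Prasad's local dichotomy: the split trilinear Hom-space is nonzero if and only if the local root number $\varepsilon_q(\pi_{f}\times\pi_g\times\pi_h)=+1$, and it is exactly Assumption~H (imposed throughout the paper) that guarantees $\varepsilon_q=+1$ for all finite $q\mid N$. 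Without Assumption~H the ``Moreover'' clause can simply fail, since all local functionals at some bad prime would vanish identically and hence $C_q=0$ for every choice of test vectors. You should replace the appeal to the global period by the appeal to Prasad's theorem together with Assumption~H; with that substitution, the rest of your sketch (one-dimensionality of the Hom-space, choice of non-new local vectors realizing a nonzero value, simultaneous arrangement at the finitely many bad primes, and the rationality bookkeeping) is the intended argument.
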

In the above theorem, $L(f, g, h; s)$ is the complex Garrett-Rankin triple product $L$-function and 
$$I(\breve{f}, \breve{g}, \breve{h}) := ( \breve{f}^*, \delta^t (\breve{g}) \cdot \breve{h} )$$
where $( \phantom{e} , \phantom{e})$ is the Petersson inner product, $\delta$ is the Maass-Shimura differential operator, and $\breve{f}^*= \breve{f} \otimes \chi_f^{-1}$ is the Atkin--Lehner involution of $\breve{f}$.

\paragraph{Triple product $p$-adic $L$-functions for finite slope families.}

Let $f, g, h$ be as before and suppose that $f$ is of finite slope $a>0$.
We further assume that and $a<1$ if $k=2$, and $2a <k-1$ if $k \geq 2$ (c.f. Remark \ref{reamrk: Petersson product of families}).
We fix modular forms $\breve{f}$, $\breve{g}$, $\breve{h}$  as in Theorem \ref{theorem: Ichino} such that all the constants $C_q$ are non-zero.
For simplicity, we write $K= \bbQ_{f, g, h}$ and let $\calO_K$ be the ring of integers in $K$.

Let $\bff, \bfg, \bfh$ be overconvergent families of modular forms deforming the $p$-stabilizations of $f, g ,h$, respectively, and similarly $\breve{\bff}, \breve{\bfg}, \breve{\bfh}$ for $\breve{f}, \breve{g}, \breve{h}$.
We denote by 
\begin{align*}
    k_f: \bbZ_p^\times \rightarrow \Lambda_f, \quad
    k_g: \bbZ_p^\times \rightarrow \Lambda_g, \quad
    k_h: \bbZ_p^\times \rightarrow \Lambda_h
\end{align*}
the analytic weights of these families.
After base-change to $\calO_K$, we may assume that $\Lambda_f, \Lambda_g, \Lambda_h$ are $\calO_K$-algebras.
Then there exists some $n \in \bbN$ such that we have 
\begin{align*}
    \bff, \breve{\bff}  \in H^0(\calX_{n}, \frakw_{k_f}), \quad
    \bfg, \breve{\bfg}  \in H^0(\calX_{n}, \frakw_{k_g}), \quad
    \bfh, \breve{\bfh}  \in H^0(\calX_{n}, \frakw_{k_h}). 
\end{align*}


As we want to apply $p$-adic powers of $\nabla$ to the $p$-depletion $\breve{\bfg}^{[p]}$ later, we need the following assumption on the weights.

\begin{assumption}
Suppose the weight $k_f - k_g - k_h$ is even, i.e., there is a weight $\nu: \bbZ_p^\times \rightarrow \Lambda_f \hat{\otimes}_{\calO_K} \Lambda_g \hat{\otimes}_{\calO_K} \Lambda_h$ such that $k_f - k_g - k_h =2\nu$.
Furthermore, $k_g$ and $\nu$ satisfy Assumption \ref{assumption: iteration}.
\end{assumption}

Under this assumption, we have
$$\nabla_{k_g}^\nu (\breve{\bfg}^{[p]}) \in H^0 ( \calX_{n'}, \bbW_{k_g +2 \nu})$$
for some $n' \geq n$.
Therefore, $\nabla_{k_g}^\nu (\breve{\bfg}^{[p]}) \times \breve{\bfh} \in H^0 ( \calX_{n'}, \bbW_{k_f})$ and we may consider its class in $H^1_{\dR}(\calX_{n'}, \bbW_{k_f-2})$.
After base change to $\mathfrak{K}_f$, we obtain a 
section in $H^1_{\dR}(\calX_{n'}, \bbW_{k_f-2}) \otimes_{\Lambda_f} \mathfrak{K}_f$,
where $\mathfrak{K}_f$ is obtained from $\Lambda_f$ by inverting the elements $\{ u_{k_f} -n \mid n \in \bbN \}$ (or one may simply take it to be $\Frac(\Lambda_f)$).
We then consider its overconvergent projection 
$$H^{\dagger, \leq a} (\nabla_{k_g}^\nu (\breve{\bfg}^{[p]})  \times \breve{\bfh} ) \in H^0(\calX_{n'}, \frakw_{k_f})^{\leq a} \otimes \mathfrak{K}_f.$$

To define the triple product $L$-function, we consider the Atkin-Lehner involution $\breve{\bff}^* = w_N(\breve{\bff}) \in H^0(\calX_{n}, \frakw_{k_f})$ (c.f. \cite[Definition~5.2]{tripleL}).
At a classical point $x \in \Omega_{f, \cl}$, the specialization $\breve{\bff}^*_x$ is simply the Atkin--Lehner involution of $\breve{\bff}_x$.

\begin{definition}
The Garrett-Rankin triple product $p$-adic $L$-function attached to the triple $(\breve{\bff}, \breve{\bfg}, $
$\breve{\bfh})$ of $p$-adic families of modular forms is

$$\mathscr{L}^f_p (\breve{\bff}, \breve{\bfg}, \breve{\bfh}) := \frac{( \breve{\bff}^{*} , H^{\dagger, \leq a} (\nabla_{k_g}^\nu (\breve{\bfg}^{[p]})  \times \breve{\bfh} ) )}{( \breve{\bff}^{*} , \breve{\bff}^{*}  )} \in \mathfrak{K}_f \hat{\otimes} \Lambda_g \hat{\otimes} \Lambda_h$$
where $( \phantom{e}, \phantom{e})$ is the Petersson product of families defined in \cite[\S~5.2.1]{tripleL}.

\end{definition}

\begin{prop}[{\cite[Corollary~5.13]{tripleL}}]
    For a unbalanced classical weight $(x, y, z)$ with $x$ dominant, one has the interpolating formula
    $$\mathscr{L}^f_p (\breve{\bff}, \breve{\bfg}, \breve{\bfh})(x, y, z) = C \cdot I(\breve{f}_x, \breve{g}_y, \breve{h}_z)$$
    for some non-zero constant $C$.
\end{prop}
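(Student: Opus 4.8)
The plan is to reduce the interpolation statement to the corresponding classical statement — namely, that the Petersson-product expression $( \breve{f}_x^*, \delta^t(\breve{g}_y) \cdot \breve{h}_z)$ computed with the Maass--Shimura operator agrees, up to the explicit factor, with the $p$-adic construction built from $\nabla^\nu$, $p$-depletion, and the overconvergent projection. The key point is that everything in sight ($\breve{\bff}^*$, $\breve{\bfg}^{[p]}$, $\nabla^\nu$, $H^{\dagger,\leq a}$, the Petersson product of families) has been set up so as to specialize well at classical points, and the identity to be proved is an identity of rigid-analytic functions on $\Sigma$, hence it suffices to check it on the Zariski-dense set of unbalanced classical points $(x,y,z)$ with $x$ dominant and $\breve{\bff}_x,\breve{\bfg}_y,\breve{\bfh}_z$ $p$-stabilizations of newforms.

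First I would record that at such a point, $\breve{\bff}^*_x$ is the Atkin--Lehner involution of $\breve{\bff}_x$ (stated above) and that the Petersson product of families specializes to the classical Petersson product (with a controllable normalization constant, tracked into $C$); this is \cite[\S~5.2]{tripleL}. Next, by Theorem~\ref{theorem: p-adic connection} and the remark following it, the specialization of $\nabla^\nu_{k_g}(\breve{\bfg}^{[p]})$ at $z$ with $\nu\mapsto t$ is the classical iterate $\nabla^t$ of the Gauss--Manin connection applied to the $p$-depleted form $\breve{g}_y^{[p]}$; one then uses the standard comparison between the iterated Gauss--Manin connection on $\calH^\bullet$ and the Maass--Shimura operator $\delta^t$ (via the Hodge-versus-unit-root splittings on the Tate curve), together with the fact that passing between $\breve{g}_y$ and its $p$-depletion introduces only the Euler-type factors that are absorbed into $C$ (this is exactly the bookkeeping behind \cite[Theorem~4.7]{DR}). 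Then I would invoke the explicit $q$-expansion description of the overconvergent projection $H^{\dagger,\leq a}$ from Proposition~\ref{prop: overconvergent projection}, together with the remark that at a classical weight $\ell$ it recovers the classical overconvergent projection of \cite[\S~3.2]{nocforms}, to see that applying $H^{\dagger,\leq a}$ and then pairing against $\breve{\bff}^*_x$ computes the holomorphic projection implicit in $I(\breve{f}_x,\breve{g}_y,\breve{h}_z)$, since the Petersson product only sees the $V_{k,0}$-component. Assembling these specializations gives $\mathscr{L}^f_p(\breve{\bff},\breve{\bfg},\breve{\bfh})(x,y,z) = C\cdot I(\breve{f}_x,\breve{g}_y,\breve{h}_z)$ with $C$ independent of the classical point (after shrinking the families if necessary so that the denominators $\prod_i(u_{k_f}-i)$ and $(\breve{\bff}^*,\breve{\bff}^*)$ do not vanish), and density of classical points upgrades this to the identity of analytic functions.

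The main obstacle is the careful matching of the $p$-adic iterated connection with the classical $\delta^t$ at the level of actual sections (not just $q$-expansions), and the precise determination that the only discrepancy between the two sides is a locally constant — in fact globally constant after shrinking — factor $C$: one must check that the Euler factors coming from $p$-depletion, from the unit-root splitting used to define the $q$-expansion map on $\bbW_k$, and from the normalization of the Petersson product of families all either cancel or are independent of $(x,y,z)$. Here I would lean on \cite[\S~5.3, Corollary~5.13]{tripleL} for the bookkeeping and on \cite[Theorem~4.7]{DR} as the template for the ordinary case, the finite-slope argument being formally identical once Proposition~\ref{prop: overconvergent projection} and Theorem~\ref{theorem: p-adic connection} are in hand. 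The remaining steps — Zariski density of classical points in $\Sigma$, compatibility of specialization with the slope decomposition, and nonvanishing of denominators on a dense set — are routine.
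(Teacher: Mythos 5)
The paper offers no proof of this proposition at all: it is imported verbatim as \cite[Corollary~5.13]{tripleL}, consistent with the author's announced policy of only recording the statements from Andreatta--Iovita that are needed later. So there is nothing in the paper to compare your argument against line by line; what I can assess is whether your outline would actually establish the statement.

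Your computational skeleton (specialize $\nabla^\nu$ to the classical iterate, match it against $\delta^{t}$ via the splitting on the Tate curve, use Proposition~\ref{prop: overconvergent projection} to identify $H^{\dagger,\leq a}$ with holomorphic projection, and track the Euler factors coming from $p$-depletion and $p$-stabilization) is the right strategy and is indeed how the cited proof goes. But there is a genuine logical gap in how you frame the reduction. The identity to be proved is \emph{not} an identity of rigid-analytic functions on $\Sigma$: the right-hand side $I(\breve{f}_x,\breve{g}_y,\breve{h}_z)$ is a classical Petersson product defined only at classical points, and it does not assemble into an analytic function of $(x,y,z)$ --- the whole point of the construction is that $\mathscr{L}^f_p$ is the analytic object that interpolates these values \emph{up to varying Euler factors}. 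Relatedly, the constant $C$ is not ``independent of the classical point'': at each $(x,y,z)\in\Sigma_f$ it involves factors of the shape $\scrE(f,g,h)/\bigl(\scrE_0(f)\scrE_1(f)\bigr)$ built from $\alpha_{f_x},\beta_{f_x},\alpha_{g_y},\ldots$, exactly as in \cite[Theorem~4.7]{DR}, and these genuinely depend on the point. The statement as given is pointwise (``for a[n] unbalanced classical weight $(x,y,z)$ \ldots for some non-zero constant $C$''), so the correct logic is a direct computation at each fixed classical point; there is no density argument to run, and attempting one would require the false claim that both sides vary analytically with the same constant. A second, smaller slip: the specialization of $\nabla^\nu_{k_g}(\breve{\bfg}^{[p]})$ is taken at the weight $y$ of $\bfg$ (with $\nu\mapsto t'$ where $x=y+z+2t'$), not ``at $z$''. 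With the density framing removed and the point-dependence of $C$ acknowledged, the rest of your outline is a fair summary of the proof in \cite[\S~5]{tripleL}.
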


\begin{remark} \label{reamrk: Petersson product of families}
    As explained in \cite[\S~5.2.1]{tripleL}, when specialized to a classical weight $x>2$ with $2a <x-1$ or $x=2$ with $a < 1$, the Petersson product interpolates the classical Petessons inner product on $S_x(\Gamma_1(N, p))$, up to a constant multiple.\\
    \indent In fact, given $\bff$ of finite slope $a$, what we really require is the condition $\alpha(\bff_x)^2 \neq p^{x-1} b(\bff_x)$, where $\alpha(\bff_x), b(\bff_x)$ are the eigenvalues of $U_p$ and $\langle p \rangle$ acting on $\bff_x$, respectively.
    In other words, if $f_x^0$ is the classical modular form on $\Gamma_1(N)$ with one of its $p$-stabilizations being $\bff_x$,
    it means that the two roots $\alpha_f, \beta_f$ of the Hecke polynomial of $f_x^0$ at $p$ are different.
    Equivalently in a fancier language, one wants to restrict to a subset of the weight space over which the eigencurve giving rise to $\bff$ is \'{e}tale over it.
\end{remark}


\begin{remark}
    When $\bff, \bfg, \bfh$ are Hida families, one can see that this definition coincides with the one given in \cite[\S~4.2]{DR}.
\end{remark}

\section{The \texorpdfstring{$p$}{}-adic Gross--Zagier formula} \label{section: p-adic GZ formula}

In this section, we will prove the $p$-adic Gross--Zagier formula.
We examine one side of the equation, \textit{i.e.}, the special values of the $p$-adic $L$-function in \S \ref{subsect: special values}. 
Then we recall some computations on the $p$-adic Abel--Jacobi image of the generalized diagonal cycle in \S 3.2.
In \S 3.3, we will prove the formula by comparing the two ingredients.

For simplicity, we will drop the $\breve{\phantom{e}}$ notation throughout this section.

\subsection{Special values at balanced classical weights} \label{subsect: special values}

We first specify at which classical points we want to study the values of  $\mathscr{L}^f_p (\bff, \bfg, \bfh)$.

Suppose $\mathbf{f}$ is a family of finite slope $a= a_f$, and $\mathbf{g}, \mathbf{h}$ are of slopes $a_g, a_h$ respectively.
We are interested in the values of $\mathscr{L}^f_p (\mathbf{f}, \mathbf{g}, \mathbf{h})$ at classical weights $(x, y, z) \in \Sigma_{\bal}$ such that
\begin{enumerate}[i.]
    \item The specializations of $\mathbf{f}, \mathbf{g}, \mathbf{h}$ at $(x, y, z)$ are $p$-old, \textit{i.e.}, $\mathbf{f}_x, \mathbf{g}_y, \mathbf{h}_z$ are $p$-stabilizations of some classical modular forms $f^0_x, g^0_y, h^0_z$ on $X_1(N)$;
    \item The triple $(x, y, z)$ is balanced with $x = y+ z- 2t$ for some $t \in \bbZ_{>0}$;
    \item $x > 2a+1$, $y > 2a_g+1$ and $z > 2a_h+1$ (c.f. Remark \ref{reamrk: Petersson product of families}).
\end{enumerate}
We denote the set consisting of all such weights by $\Sigma_{\mathbf{f}, \mathbf{g}, \mathbf{h}}$.
To simplify the notations, we will write $f_x := \mathbf{f}_x$, $g_y:= \mathbf{g}_y$, $h_z:= \mathbf{h}_z$ and denote the classical forms $f^0_x, g^0_y, h^0_z$ by $f, g, h$.
Notice that the new $f, g, h$ are different from the modular forms (of unbalanced weights $k, \ell, m$) which the families $\mathbf{f}, \mathbf{g}, \mathbf{h}$ deform respectively.

As we will proceed our proof by studying the $q$-expansions of various modular forms, we here recall several definitions.
On the Tate curve, one has the canonical basis $\{ \omega_{\can}, \eta_{\can} \}$ of $\calH$ that satisfies
$$\nabla (\omega_{\can}) =\eta_{\can} \otimes \frac{dq}{q}, \quad \nabla(\eta_{\can} )=0.$$
As mentioned in Remark \ref{remark: specializaion of q-expansion}, the element $V_{r, i}$ of $\bbW_r(q)$ corresponds to the element $\omega_{\can}^{r-i} \eta_{\can}^i$ when $r$ is a positive integer and $i \leq r$.
From now on, we will make no difference between these two notations and use them interchangeably.

We now examine the $q$-expansion of $H^{\dagger} (\nabla_{k_g}^\nu (\mathbf{g}^{[p]}) \times \mathbf{h})$ at $(y, z)$.
Let
$$g_y^{[p]}(q) V_{y, 0} \textrm{ and } h_z(q) V_{z, 0}$$
be the $q$-expansions of the specializations of $\mathbf{g}^{[p]}$ and $\mathbf{h}$ at $y$ and $z$ respectively.

Since we assume the triple $(x, y, z)$ to be balanced, $b:= y-t$ is a positive integer.
Apply the formula in Theorem \ref{theorem: p-adic connection},
we see that
\begin{align*}
    \nabla^{-t} (g_y^{[p]} V_{y, 0}) &= \sum_j^\infty \binom{-t}{j} \prod_{i=0}^{j-1} (y- t- 1- i)\theta^{-t-j} g_y^{[p]}(q) V_{y-2t, j} \\
    &= \sum_{j=0}^{b-1} \binom{-t}{j} \prod_{i=0}^{j-1} (y- t- 1- i)\theta^{-t-j} g_y^{[p]}(q) V_{y-2t, j}
\end{align*}
is in fact a finite sum.
We simplify the above formula as 
$$\nabla^{-t} (g_y^{[p]} V_{y, 0}) = \sum_{j=0}^{b-1} C_j \cdot \theta^{-t-j} g_y^{[p]}(q) V_{y-2t, j}$$
by letting $C_j:=  \binom{-t}{j} \prod_{i=0}^{j-1} (y- t- 1- i)$. 

The $q$-expansion of the product $\nabla^{-t} g_y^{[p]}(q) \times h_z(q)$ can now be expressed as 
$$\sum_{j=0}^{b-1} C_j \cdot \theta^{-t-j} g_y^{[p]}(q) V_{y-2t, j} \times h_z(q) V_{z, 0} = \sum_{j=0}^{b-1} C_j \cdot \theta^{-t-j} g_y^{[p]}(q) \times h_z(q) V_{x, j}.$$
We here remark that $b-1 \leq x$.
Since if $b-1 = y-t-1 >x$, by using the balancedness assumption $x+z-y >0$, we have $z >t+1$.
But this would imply $x = y-t +z -t > y - t+1 >y-t-1$, which is a contradiction.
As a result, one has the identification
$$\sum_{j=0}^{b-1} C_j \cdot \theta^{-t-j} g_y^{[p]}(q) \times h_z(q) V_{x, j} = \sum_{j=0}^{b-1} C_j \cdot \theta^{-t-j} g_y^{[p]}(q) \times h_z(q) \omega_{\can}^{x-j} \eta_{\can}^j .$$

Applying the overconvergent projection formula in Prop. \ref{prop: overconvergent projection} with 
$$\gamma_j (q) = C_j \cdot \theta^{-t-j} g_y^{[p]}(q) \times h_z(q),$$
we get
\begin{align*}
    H^\dagger (\nabla^{-t}(g_y^{[p]} V_{y, 0}) \times h_z V_{z, 0})(q) = \sum_{j=0}^{b-1} (-1)^j C_j \cdot \frac{\theta^j (\theta^{-t-j} g_y^{[p]}(q) \times h_z(q))}{(x-2-j+1)(x-2 -j +2) \cdots (x-2)} V_{x, 0}.
\end{align*}

\subsection{The \texorpdfstring{$p$}{}-adic Abel--Jacobi maps and the generalized diagonal cycles} \label{subsection: AJ imgae}


\paragraph{Kuga--Sato varieties and the generalized diagonal cycles.}

For any $r \geq 0$, we have the Kuga--Sato variety $W_r$, which is the desingularization (c.f. \cite[Appendix]{BDP}) of the $r$-fold fiber product 
$$W_r' := E \times _X E \cdots \times_X E.$$

Then one may see the parabolic cohomolgy $H^1_{\para} (X, \calH^r)$ as a subspace in a correct degree of the de Rham cohomology of $W_r$.
This is illustrated in the following lemma.

\begin{lemma}[{\cite[Lemma~2.2]{BDP}}]
Assume that $r \geq 1$. Then there is a  idempotent $\epsilon_r \in \bbQ[ \Aut(W_r/X)]$, defined in \cite[\S3.1]{DR}, such that we have 
\begin{equation*}
    \epsilon_r H^j_{\dR}(W_r/ K) = \begin{cases} 0  &j \neq r+1 \\
    H^1_{\para} (X, \calH^r) &j=r+1
    \end{cases}.
\end{equation*}
Furthermore, the Hodge filtration on $\epsilon_r H^{r+1}_{\dR}(W_r/ K)$ is given by
\begin{align*}
    &\Fil^0 = H^1_{\para} (X, \calH^r), \\
    &\Fil^1 = \Fil^2 = \cdots = \Fil^{r+1} = H^0(X, \underline{\omega}^r \otimes \Omega^1_X), \\
    & \Fil^{r+2} =0.
\end{align*}
\end{lemma}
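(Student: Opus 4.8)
The statement to prove is \cite[Lemma~2.2]{BDP}: for $r \geq 1$ there is an idempotent $\epsilon_r \in \bbQ[\Aut(W_r/X)]$ such that $\epsilon_r H^j_{\dR}(W_r/K)$ vanishes for $j \neq r+1$ and equals $H^1_{\para}(X, \calH^r)$ for $j = r+1$, with the Hodge filtration computed as stated. The plan is to follow the standard descent-to-the-fiber-product argument. First I would recall the construction of $\epsilon_r$: the automorphism group $\Aut(W_r'/X)$ contains the group $(\mu_2)^r \rtimes S_r$ acting by sign changes on the $r$ copies of $E$ and by permutation of the factors (one uses the $[-1]$ involution on each fibral elliptic curve together with the symmetric group), and $\epsilon_r$ is the idempotent in the group algebra that projects onto the subrepresentation on which each $\mu_2$ acts by the sign character and $S_r$ acts trivially (the ``$\varepsilon$-part''). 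This is exactly the idempotent recalled in \cite[\S3.1]{DR}, and since it has coefficients in $\bbQ$ it acts on de Rham cohomology with $K$-coefficients.

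The heart of the argument is a K\"unneth/Leray computation. Working over the open curve $Y$ first, the relative de Rham cohomology of $E^r/Y$ decomposes by the K\"unneth theorem as a sum of tensor products $\bigotimes_{i=1}^r R^{a_i}\pi_{i,*}\Omega^\bullet_{E/Y}$ with $\sum a_i$ the total degree; applying $\epsilon_r$ kills every $R^0 = R^2 = \calO_Y$ factor (since $[-1]$ acts trivially there but $\epsilon_r$ demands the sign character) and every summand that is not symmetric, so only the fully ``$R^1$'' part survives, giving $\epsilon_r R^r\pi_{*}\Omega^\bullet_{E^r/Y} = \Sym^r \calH = \calH^r$ and $\epsilon_r R^a = 0$ for $a \neq r$. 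Then the Leray spectral sequence for $E^r \to Y \to \Spec K$ degenerates into $\epsilon_r H^j_{\dR}(E^r/K) = H^{j-r}_{\dR}(Y, \calH^r)$; since $Y$ is an affine curve this is nonzero only for $j - r \in \{0,1\}$. To pin down the answer on the smooth compactification $W_r$ one passes from $E^r/Y$ to $W_r/K$: the desingularization of the fiber product over the cusps (the ``Appendix of \cite{BDP}'' construction) is chosen precisely so that $\epsilon_r$ annihilates the cohomology supported on the cusp fibers, and the surviving part of $\epsilon_r H^{r+1}_{\dR}(W_r/K)$ is the image of $H^1_{\dR}(Y,\calH^r)$ inside $H^1_{\dR}(X,\calH^r)$ coming from the interior, which is by definition the parabolic cohomology $H^1_{\para}(X,\calH^r)$; simultaneously $\epsilon_r H^r_{\dR}(W_r/K) = H^0(X,\calH^r)$ vanishes because $\calH^r$ has no nonzero global sections on the complete curve $X$ for $r \geq 1$ (it is a successive extension of the line bundles $\underline\omega^{r-2i}$, none of which has sections for $r\geq 1$, using the standard degree computation).

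For the Hodge filtration, the input is the decreasing filtration on $\calH^r = \Sym^r\calH$ induced by $0 \to \underline\omega \to \calH \to \underline\omega^{-1}\to 0$, whose $\Fil^i$ has graded pieces $\underline\omega^{r-2j}$ for $j < i$ wait---more precisely $\Fil^i\calH^r$ is the subsheaf with quotient supported in the low ``$\underline\omega^{-1}$-heavy'' part; combining this with the de Rham-to-$\Omega^1$ twist coming from $\nabla: \calH^r \to \calH^r\otimes\Omega^1_X(\log C)$ and the Kodaira--Spencer identification $\Omega^1_X(\log C)\cong\underline\omega^2$, one gets that the associated graded of the Hodge filtration on $H^1_{\para}(X,\calH^r)$ in positive filtration degree is concentrated in the single piece $H^0(X,\underline\omega^r\otimes\Omega^1_X)$, sitting in filtration degrees $1$ through $r+1$, with $\Fil^{r+2}=0$ for dimension reasons (the parabolic $H^1$ has Hodge numbers only in a range of length $r+2$). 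This last bookkeeping with the symmetric-power filtration is the step I expect to be the most delicate to write carefully; everything else is a formal consequence of K\"unneth, Leray degeneration on a curve, and the known good reduction / desingularization properties of Kuga--Sato varieties, all of which are cited from \cite{BDP} and \cite{DR}.
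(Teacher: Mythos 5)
The paper offers no proof of this lemma: it is quoted verbatim from \cite[Lemma~2.2]{BDP} (going back to Scholl), with the idempotent taken as given from \cite[\S 3.1]{DR}. Your sketch reconstructs the standard argument behind that citation --- K\"unneth for $E^r/Y$, the observation that $\epsilon_r$ kills every summand except $\Sym^r R^1\pi_*\Omega^\bullet_{E/Y}=\calH^r$, Leray degeneration over the affine curve, vanishing of $H^0(X,\calH^r)$ for $r\ge 1$, and the passage to $W_r$ where $\epsilon_r$ annihilates the boundary contributions so that what survives is exactly parabolic cohomology --- and this is the right route; the Hodge-filtration bookkeeping via Griffiths transversality and Kodaira--Spencer is also correct in its conclusion, if loosely written.

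One inaccuracy worth fixing: in the definition of $\epsilon_r$ the symmetric group $S_r$ enters through its \emph{sign} character, not the trivial one. Geometric permutation of the factors of $E^r$ acts on $H^1(E)^{\otimes r}\subset H^r(E^r)$ with the Koszul sign (the classes live in odd degree), so projecting onto the part where $S_r$ acts by $\mathrm{sgn}$ is what yields the symmetric power $\Sym^r\calH$ at the sheaf level; demanding the trivial character would instead pick out the alternating part. Since you defer the definition to \cite[\S 3.1]{DR} this does not derail the argument, but if you were to write the K\"unneth step out in full with the trivial character you would project onto the wrong summand.
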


\begin{remark}
    We here recall that the parabolic cohomology $H^1_{\para} (X, \calH^r)$ is defined to be the hypercohomology of the complex
    $$0 \rightarrow \calH^r \rightarrow \left ( \calH^r \otimes \Omega^1_X + \nabla(\calH^r) \right ) \rightarrow 0.$$
    As we are mainly interested in cusp forms, the parabolic cohomology has the advantage that it comes with the exact sequence (c.f. \cite[\S~2]{mf_deRham})
    $$ 0 \rightarrow H^0(X, \underline{\omega}^r \otimes \Omega^1_X) \rightarrow H^1_{\para} (X, \calH^r) \rightarrow H^1(X, \underline{\omega}^{-r}) \rightarrow 0.$$
\end{remark}

Suppose now we have a triple of balanced weights $(x, y, z) = (r_1+2, r_2 +2, r_3+2)$.
We further assume that $r_1 > 0, r_2 > 0, r_3 >0$ and $r := \frac{r_1+r_2+r_3}{2} \in \bbN$.

Set $W = W_{r_1} \times W_{r_2} \times W_{r_3}$, which is of dimension $2r+3$.
We now briefly recall the definition of the generalized diagonal cycle $\Delta_{x, y, z} \in \mathrm{CH}^{r+2}(W)$ (c.f. \cite[Definition~3.3]{DR}).

Choose three subsets
$$A = \{ a_1, \ldots, a_{r_1} \}, \quad B = \{ b_1, \ldots, b_{r_2} \}, \quad C = \{ c_1, \ldots, c_{r_3} \}$$
of $\{1, \ldots, r\}$ such that $A \cap B \cap C = \emptyset$ and $A \cup B = B \cup C = A \cup C = \{1, \ldots, r\}$.
One can see that the balancedness assumption guarantees the existence of such sets.
Then we consider the map
\begin{align*}
     \varphi_{ABC}: W_r &\rightarrow W:= W_{r_1} \times W_{r_2} \times W_{r_3}\\
      (x; P_1, \ldots P_r) &\mapsto ((x;P_{a_1}, \ldots, P_{a_{r_1}}), (x;P_{b_1}, \ldots P_{b_{r_2}}), (x;P_{c_1}, \ldots P_{c_{r_3}})),
\end{align*}
which is a closed embedding of $W_r$ into $W$.

\begin{definition}
The generalized diagonal cycle is defined by
$$\Delta_{x, y, z} = (\epsilon_{r_1}, \epsilon_{r_2}, \epsilon_{r_3}) \varphi_{ABC}(W_r) \in \mathrm{CH}^{r+2}(W).$$

\end{definition}

\begin{remark}
    Here we assume that $x, y, z >2$ for simplicity.
    The definition of $\Delta_{x, y, z}$ when some of the weights are equal to $2$ can be found in \cite[\S~3]{DR}.
\end{remark}

By using the K\"{u}nneth decomposition on $H^{2r+4}_{\dR}(W)$ and examining the image of the idempotents $\epsilon_{r_i}$'s, it follows that the cycle $\Delta_{x, y, z} $ is homologous to zero.
That is,
$$ \Delta_{x, y, z} \in \mathrm{CH}^{r+2}(W)_0 := \ker (\cl : \mathrm{CH}^{r+2}(W) \rightarrow H^{2r+4}_{\dR}(W) ). $$
In particular, one may consider the image of $\Delta_{x, y, z}$ 
under the $p$-adic Abel--Jacobi map (c.f. \cite{p-AJmap})
$$\AJ_p : \mathrm{CH}^{r+2}(W)_0 \rightarrow \Fil^{r+2}H^{2r+3}_{\dR}(W)^\vee.$$

\paragraph{Explicit computations of the Abel--Jacobi maps.}

We now study the value of $\AJ_p(\Delta_{x, y, z})$ at a certain element $\eta \otimes \omega_2 \otimes \omega_3$ in
$ H^1_{\para}(X, \calH^{r_1}) \otimes H^0(X, \underline{\omega}^{r_2} \otimes \Omega^1_X) \otimes  H^0(X, \underline{\omega}^{r_3} \otimes \Omega^1_X).$

First of all, by the balancedness assumption, we see that 
$$H^1_{\para}(X, \calH^{r_1}) \otimes H^0(X, \underline{\omega}^{r_2} \otimes \Omega^1_X) \otimes  H^0(X, \underline{\omega}^{r_3} \otimes \Omega^1_X) \subset \Fil^{r+2}H^{2r+3}_{\dR}(W).$$



We now specify the forms $\eta, \omega_2, \omega_3$ we will encounter later.
Let $f, g, h$ be a triple of cusp forms on $X_1(N)$ of balanced weights $(x =r_1+2,\ y =r_2+2,\ z=r_3+2)$ such that $x = y+z -2t$ as before.
The cusp form $f$ then corresponds to a section $\omega_f \in H^0(X, \underline{\omega}^{r_1} \otimes \Omega^1_X)$ and similar for $g$ and $h$.
We also reserve the notation $\calU$ for some strict neighborhood of the ordinary locus in $X$ (which may vary according to the situation) and view $\omega_g, \omega_h$ as sections over $\calU$ via restriction.

We write the Hecke polynomial of $f$ as 
$$T^2 -a_p(f) T +\chi_f(p) p^{x-1} = (T-\alpha_f)(T-\beta_f)$$
with $a= \ord_p(\alpha_f)$, and similarly for $g, h$.
We also write
\begin{align*}
    P_g(T) &= 1- a_p(g) p^{1- y}T + \chi_g(p) p^{1-y} T^2 = (1- \alpha_g p^{1-y}T)(1- \beta_g p^{1-y}T), \\
    P_h(T) &= 1- a_p(h) p^{1- z}T + \chi_h(p) p^{1- z} T^2 = (1- \alpha_h p^{1-z}T)(1- \beta_h p^{1-z}T).
\end{align*}
We will write
$\alpha_g' := \alpha_g p^{1-y}$, $\beta_g' := \beta_g p^{1-y}$, $\alpha_h' :=\alpha_h p^{1-z}$ and $\beta_h' := \beta_h p^{1-z}$.
The polynomial $P_g(T)$ is defined such that $P_g(\phi)$ annihilates the class of $\omega_g$, where $\phi$ is the Frobenius on the cohomology $H^1_{\para}(X, \calH^{r_2})$.
In fact, $P_g(\phi) \omega_g = \omega_{g^{[p]}}$ as section over $\calU$.

Similarly as in \cite[Cor.~2.13]{DR} and the paragraph before it, we have a class $\overline{\eta_f} \in H^1(X, \underline{\omega}^{-r_1})$
such that for any cusp form $\omega \in S_x(\Gamma_1(N))$, we have
$$\langle \overline{\eta_f}, \omega \rangle_{\dR} = \frac{( f^*, \omega )_N}{( f^*, f^* )_N}.$$

We want to find a suitable lift of $\overline{\eta_f}$ in $H^1_{\para}(X_K, \calH^{r_1})$.
First recall that Poincar\'{e} pairing on $H^1_{\para}(X_K, \calH^{r_1})$ descends to a perfect pairing
\begin{equation*}
    H^1_{\para}(X_K, \calH^{r_1})[f] /\Fil^{r_1+1} \times S_k(N; K)[{f}^*] \rightarrow K.
\end{equation*}
Hence, one can viewed $\overline{\eta_f}$ as an element in $H^1_{\para}(X_K, \calH^r)[f] /\Fil^{r+1}$.
We would also like to remark that on the $f$-isotypic part $H^1_{\para}(X_K, \calH^{r_1})[f]$, the Frobenius $\phi$ acts with eigenvalues $\alpha_f/ p^{x-1} = \beta_{f^*}$ and $\beta_f/ p^{x-1} = \alpha_{f^*}$.
Notice that $\alpha_{f^*} = \chi_f(p)^{-1} \alpha_f$ is also of $p$-adic valuation $a$.

Suppose that $a <x-1$, then one has the following decomposition (c.f. \cite[\S~2.4.3]{familyofGalois})
$$ H^1_{\para}(X, \calH^{r_1})[f] =  \Fil^{r_1+1} H^1_{\para}(X, \calH^{r_1}) [f] \oplus H^1_{\para}(X, \calH^{r_1})[f]^{\phi = \alpha_{f^*}}.$$
This splitting allows us to fix a lift 
$\eta_f =\eta_f^a$ of $\overline{\eta_f}$ such that the Frobenius $\phi$ acts as multiplication by $\alpha_{f^*}$.


We now turn our attention to the modular forms $g$ and $h$.
As $1-VU$ annihilates the cohomology group $H^1_{\para}(X, \calH^{r_2})$, the form $\omega_g^{[p]}$ over $\calU$ is $\nabla$-exact.
A primitive $G^{[p]}$ over $\calU$ can be chosen (uniquely) such that its polynomial $q$-expansion takes the form
$$G^{[p]}(q) = \sum_{i=0}^{r_2} (-1)^i i! \binom{r_2}{i} \theta ^{-i-1} g^{[p]} (q) \omega_{\can}^{r_2-i} \eta_{\can}^i .$$
The product $G^{[p]} \times \omega_h$ can then be viewed as a section of $\calH^{r_2} \otimes \calH^{r_3} \otimes \Omega_X^1$.
By assumption on the weights $(x, y, z)$, it follows that there is a projection $\pr_{r_1}: \calH^{r_2} \otimes \calH^{r_3} \rightarrow \calH^{r_1}(1-t)$ (c.f. \cite[Prop.~2.9]{DR}).
As a result, one has a section $\pr_{r_1}(G^{[p]} \times \omega_h)$ of $\calH^{r_1}(1-t) \otimes \Omega^1_X$.

\begin{remark} \label{remark: projection}
We here explain the projection $\pr_{r_1}$ in detail, for it is crucial in later computations.

Set $r:= r_2+r_3 -(t-1)$, and let $\calH^{(1)}, \ldots, \calH^{(r)}$ be $r$-copies of $\calH$ and $\calH^{\otimes r} := \calH^{(1)} \otimes \cdots \calH^{(r)}$.
We then have a natural embedding of $\calH^r := \Sym^r \calH$ into $\calH^{\otimes r}$.

We choose subsets $B \subset \{1, 2, \ldots ,r \}$ of cardinality $r_2$ and $C \subset \{1, 2, \ldots ,r \}$ of cardinality $r_3$ such that $B \cup C = \{1, 2, \ldots ,r \}$ as before.
Notice that we automatically have $\# (B \cap C)^c = r_1$.
We may fix a simple choice $B=\{1, 2, \ldots ,r_2\}$ and $C = \{r-r_3+1 = r_2- t+2, \ldots ,r \}$.

We may embed $\calH^{r_2}$ canonically into $\calH^{\otimes r_2}$, then embed it into $\calH^{\otimes r}$ via the set $B$, and embed $\calH^{r_3}$ into $\calH^{\otimes r}$ via $C$ similarly.
In terms of the basis $\{ \omega_{\can}, \eta_{\can} \}$, the element $1 \cdot \omega_{\can}^{r_2}$ of $\calH^{r_2}$ is sent to 
$\omega_{\can}^{(r_2)}= 1 \cdot \omega_{\can} \otimes \omega_{\can} \cdots \otimes \omega_{\can}$ of $\calH^{\otimes r_2}$.
On the other hand, $1 \cdot \omega_{\can}^{r_2-1} \eta_{\can}$ is sent to 
$$\frac{1}{\binom{r_2}{1}} \sum_{j=1}^{r_2} \omega_{\can}^{(\hat{j})} \otimes \eta_{\can}^{(j)}$$
where $\eta_{\can}^{(j)} = 1 \otimes \cdots 1 \otimes \eta_{\can} \otimes 1 \cdots \otimes 1$ with only one $\eta_{\can}$ at the $j$-th component, and 
$\omega_{\can}^{(\hat{j})} = \omega_{\can} \otimes \cdots \omega_{\can} \otimes 1 \otimes \omega_{\can} \cdots \otimes \omega_{\can}$ with only one $1$ at the $j$-th component.
One should be able to work out the general cases explicitly.

Now apply the Poincar\'{e} pairing $\calH \times \calH \rightarrow \calO_X(-1)$ component-wise on the images of $\calH^{r_2}$ and $\calH^{r_3}$.
Since there are $(t-1)$-many overlapping components corresponding to $B\cap C$, after symmetrization, we may identify the resulting sheaf as $\calH^{r_1}(1-t)$.
We remark that the symmetrization sends, for example, $\sum_{j=1}^{r_2} \alpha_j \omega_{\can}^{(\hat{j})} \otimes \eta_{\can}^{(j)}$ to $(\sum_{j=1}^{r_2} \alpha_j ) \cdot \omega_{\can}^{r_2-1} \eta_{\can}$.

Note that the projection $\pr_{r_1}$ generalizes the decomposition $$\Sym^{r_2} V \otimes \Sym^{r_3} V \cong \bigoplus_{j =0}^{ \min \{r_2, r_3\}} \Sym^{r_2 +r_3 -2j} V $$
for a two dimensional vector space $V$ (or viewed as the standard representation of $\SL_2$).
\end{remark}

Now we are able to describe the Abel--Jacobi image.
\begin{theorem}\label{theorem: AJ map of diagonal cycle}
Let $\Delta_{x, y, z}$ be the general diagonal cycle defined as above, and view $\eta_f \otimes \omega_g \otimes \omega_h$ as elements in the de Rham cohomology $\Fil^{r+2} H^{2r+3}_{\dR}(W)$.
Then we have
$$\AJ_p(\Delta_{x, y, z})(\eta_f \otimes \omega_g \otimes \omega_h) = \frac{\scrE_1(f)}{\scrE(f, g, h)} \cdot \langle \eta_f,  e^{\leq a} \pr_{r_1}(G^{[p]} \times \omega_h) \rangle$$
where $\langle \phantom{e}, \phantom{e} \rangle$ is the Poincar\'{e} pairing between $H^1_{\para}(X, \calH^{r_1})$ and $H^1_{\para}(X, \calH^{r_1}(1-t))$ and $e^{\leq a}$ is the projection to the slope $\leq a$ part with respect to the operator $U$.
Note that on the right hand side, we identify $e^{\leq a} \pr_{r_1}(G^{[p]} \times \omega_h)$ with its class in $H^1_{\para}(X, \calH^{r_1}(1-t))$.

The terms $\scrE_1(f)$ and $\scrE(f, g, h)$ are Euler factors defined by
\begin{align*}
    \scrE_1(f) &=  (1- \beta^2 \alpha_g' \beta_g' \alpha_h' \beta_h') = (1- \beta_f^2 \chi_f^{-1}(p) p^{-x}),\\
    \scrE(f, g, h) &=  (1-\beta \alpha_g' \alpha_h') (1-\beta \alpha_g' \beta_h') (1-\beta \beta_g' \alpha_h') (1-\beta \beta_g' \beta_h') 
\end{align*}
where $\beta := p^{x-1+t-1}/ \alpha_{f^*}$.
\end{theorem}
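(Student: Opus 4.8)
The plan is to compute the $p$-adic Abel--Jacobi image by means of the explicit description of $\AJ_p$ via Coleman integration (equivalently Besser's finite polynomial cohomology), following the template of \cite[\S3.3--\S3.4]{DR} but keeping track of the finite slope projector $e^{\leq a}$ in place of Hida's ordinary projector $e_{\ord}$. Concretely, pulling back $\eta_f\otimes\omega_g\otimes\omega_h$ along the embedding $\varphi_{ABC}$ and applying the idempotents $(\epsilon_{r_1},\epsilon_{r_2},\epsilon_{r_3})$, the quantity $\AJ_p(\Delta_{x,y,z})(\eta_f\otimes\omega_g\otimes\omega_h)$ reduces to a Coleman-type integral over $X$ of $\eta_f$ paired against a primitive of $\omega_g\cdot\omega_h$ (suitably projected through $\pr_{r_1}$), exactly the mechanism that produces the factor $\langle\eta_f,\,\pr_{r_1}(G^{[p]}\times\omega_h)\rangle$.

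The key steps, in order, would be: (1) reduce the three-variable de Rham pairing on $W=W_{r_1}\times W_{r_2}\times W_{r_3}$ to a single integral on $X$ using the Künneth decomposition and the fact that, on the $f,g,h$-isotypic pieces cut out by the $\epsilon_{r_i}$'s, the Gysin pushforward of $\varphi_{ABC}$ factors through $\pr_{r_1}\colon\calH^{r_2}\otimes\calH^{r_3}\to\calH^{r_1}(1-t)$; (2) replace $\omega_g$ by the $p$-depleted form $\omega_g^{[p]}=P_g(\phi)\omega_g$, which is $\nabla$-exact over a strict neighborhood $\calU$ with the explicit primitive $G^{[p]}$ whose $q$-expansion is recorded above — this is what allows the Coleman primitive to be written down locally; (3) track the Euler factor bookkeeping: each application of $P_g(\phi)$ (and the analogous step for $h$, and the Frobenius eigenvalue $\alpha_{f^*}$ governing $\eta_f$) contributes a factor, and reconciling $\beta_f/p^{x-1}=\alpha_{f^*}$, $\alpha'_g=\alpha_g p^{1-y}$, etc., with $\beta=p^{x-1+t-1}/\alpha_{f^*}$ produces precisely $\scrE(f,g,h)^{-1}$ in front, while undoing the $p$-depletion on the $f$-side (i.e.\ comparing $\langle\overline{\eta_f},-\rangle$ on depleted versus non-depleted input) produces the single factor $\scrE_1(f)$; (4) insert the slope-$\leq a$ projector $e^{\leq a}$: since $\eta_f$ lies in the slope-$a$ part and $U$ is self-adjoint up to the relevant twist for the Poincaré pairing, $\langle\eta_f, X\rangle = \langle\eta_f, e^{\leq a}X\rangle$, which legitimizes the appearance of $e^{\leq a}\pr_{r_1}(G^{[p]}\times\omega_h)$ on the right-hand side and ensures the class lives in the finite-dimensional slope-$\leq a$ subspace where the pairing is defined.

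I expect the main obstacle to be step (4) together with the convergence issues implicit in step (2): in the Hida case one has a genuine idempotent $e_{\ord}$ and the primitive $G^{[p]}$ is a bona fide overconvergent form, whereas here $e^{\leq a}$ is only a projector onto a finite-slope subspace of an infinite-dimensional space of overconvergent forms, and one must check that $\pr_{r_1}(G^{[p]}\times\omega_h)$ defines a class in $H^1_{\para}(X,\calH^{r_1}(1-t))$ on which $e^{\leq a}$ acts and that the Poincaré pairing $\langle\eta_f, e^{\leq a}\pr_{r_1}(G^{[p]}\times\omega_h)\rangle$ is the correct finite-slope analogue of the ordinary computation — this is precisely where the input of \cite{tripleL} on slope decompositions and on the overconvergent projection $H^{\dagger,\leq a}$ is needed. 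A secondary technical point is handling the Tate-curve normalization: the primitive $G^{[p]}$ is pinned down by its polynomial $q$-expansion, so matching the combinatorial coefficients $(-1)^i i!\binom{r_2}{i}$ against the coefficients $C_j$ appearing in $\S\ref{subsect: special values}$ must be done carefully, but this is a bookkeeping matter rather than a conceptual one. Once these points are settled, comparing the resulting expression with the $q$-expansion computation of $H^{\dagger}(\nabla^{-t}(g_y^{[p]}V_{y,0})\times h_z V_{z,0})$ from the previous subsection will give the stated identity, and it is this comparison — carried out in $\S3.3$ — that will ultimately yield Theorem~\ref{theorem: main theorem in intro}.
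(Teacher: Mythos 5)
Your proposal follows essentially the same route as the paper: both reduce to the argument of \cite[\S~3.4]{DR} with the finite-slope projector $e^{\leq a}$ in place of $e_{\ord}$, and both isolate as the key new inputs the compatibility $\langle\eta_f,\omega\rangle=\langle\eta_f,e^{\leq a}\omega\rangle$ for $\eta_f$ a Frobenius eigenvector of slope $\leq a$ (the paper's Lemma \ref{lemma: pairing}, proved in the Appendix via Riesz theory and the adjunction of $U$ with $\phi$ under the Poincar\'{e} pairing) together with the vanishing $e^{\leq a}(G^{[p]}\cdot V\omega_h)=0$, which holds because $e^{\leq a}$ is a power series in $U$ without constant term. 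Your step (4) is exactly this lemma, so the two arguments coincide.
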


\begin{proof}
The proof of this theorem is essentially identical to the proof in \cite[\S~3.4]{DR}. 
The main difference is that instead of the ordinary projection $e_{\ord}$, we use the finite slope projection $e^{\leq a}$.

Notice that the projection $e^{\leq a}$ can be written as a power series in $U$ without constant term (c.f. Remark \ref{remark: slpoe projection}).
Consequently, one has the following analogous result of \cite[Lemma~2.17]{DR}:
$$ e^{\leq a} (G^{[p]} \cdot V \omega_h ) =0.$$



In addition, one needs the following lemma (c.f. \cite[Proposition~2.11]{DR}), which will be proved in the Appendix.
\begin{lemma} \label{lemma: pairing}
Suppose $\eta \in H^1_{\para}(X, \calH^r)$ is an element for which the Frobenius $\phi$ acts with eigenvalue $\alpha$ such that $v_p(\alpha) \leq a$.
Then for any $\omega \in H^1_{\para}(X, \calH^r)$, we have
$$\langle \eta, \omega \rangle = \langle \eta, e^{\leq a}\omega \rangle$$
where the pairing is the Poincar\'{e} pairing as in Theorem \ref{theorem: AJ map of diagonal cycle}.
\end{lemma}
With these two facts, one then follow the exact same argument in \cite[\S~3.4]{DR} and conclude the proof.
\end{proof}

\begin{remark}
In \cite{reg-formula}, it is showed that the proof of the above theorem in the case $(x, y, z) =(2, 2, 2)$ can be greatly simplified by using the explicit cup product formula for finite polynomial cohomology on $X$.
It is expected that the general case can also be proved in a similar manner if one has a suitable theory with coefficients.
Currently, there is a joint work of me and  Ju-Feng Wu \cite{fp_coef} aiming to develop a theory of finite polynomial cohomology and $p$-adic Abel--Jacobi maps with coefficients.
Alternatively, it is also proved in my Ph.D. thesis \cite{tripleL_and_GZ_formula}.
\end{remark}

\subsection{Proof of the \texorpdfstring{$p$}{}-adic Gross--Zagier formula}

After studying the two ingredients, we are now able to prove the $p$-adic Gross--Zagier formula.
Before doing so, we first recall our notations.

Let $\mathbf{f}, \mathbf{g}, \mathbf{h}$ be three finite slope families and $(x, y, x) \in \Sigma_{\mathbf{f}, \mathbf{g}, \mathbf{h}}$ be a triple of balanced weights as in Section \ref{subsect: special values}.
Their specializations $f_x, g_y, h_z$ are classical modular forms on $X_1(N, p)$ and will be viewed as sections over the strict neighborhood $\calU$ in $X$.
The forms $f_x, g_y, h_z$ are the $p$-stabilizations of modular forms $f, g, h$ on $X_1(N)$ and $f_x = (1- \beta_f V)f$ by our assumption.
We also have the elements $\eta_{f}, \omega_{g}, \omega_{h}$ associated with $f, g, h$ in their respective cohomology groups as defined in Section \ref{subsection: AJ imgae}.

\begin{theorem}\label{theorem: main theoerem}
($p$-adic Gross--Zagier formula)
Let notations be as above.
We have
$$\scrL_p^f (\mathbf{f}, \mathbf{g}, \mathbf{h})(x, y,z) = (-1)^{t-1} \frac{\scrE(f, g, h)}{(t-1)! \scrE_0(f) \scrE_1(f)} \times \AJ_p(\Delta_{x, y,z})(\eta_f \otimes \omega_g \otimes \omega_h),$$
where the Euler factors $\scrE(f, g, h)$ and $\scrE_1(f)$ are as Theorem \ref{theorem: AJ map of diagonal cycle}, and 
$$\scrE_0(f) = (1 -\beta_{f}^2 \chi_{f}^{-1}(p) p ^{1-x}) .$$
\end{theorem}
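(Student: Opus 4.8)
The plan is to prove the Gross--Zagier formula by comparing the two computations carried out in \S\ref{subsect: special values} and \S\ref{subsection: AJ imgae}, showing that the $p$-adic $L$-value and the Abel--Jacobi image are expressible in terms of the same Poincar\'e pairing up to the explicit Euler factors. First I would unwind the definition
$$\scrL_p^f(\bff,\bfg,\bfh)(x,y,z) = \frac{\big(f_x^*,\, H^{\dagger,\leq a}(\nabla^{-t}(g_y^{[p]})\times h_z)\big)_N}{(f_x^*,f_x^*)_N},$$
recalling that $f_x = (1-\beta_f V)f$, so that up to an elementary Euler factor in $\beta_f$ the Petersson product against $f_x^*$ can be rewritten as a product against $f^*$; this is exactly where the factor $\scrE_0(f) = 1-\beta_f^2\chi_f^{-1}(p)p^{1-x}$ should enter, via the standard computation of $(f^*,Vf^*)_N$ and $(f_x^*,f_x^*)_N$ in terms of $(f^*,f^*)_N$ (as in \cite[\S4]{DR}). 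Then I would invoke the class $\overline{\eta_f}\in H^1(X,\underline\omega^{-r_1})$ with the defining property $\langle\overline{\eta_f},\omega\rangle_{\dR} = (f^*,\omega)_N/(f^*,f^*)_N$, so that the numerator of $\scrL_p^f$ becomes $\langle\eta_f, H^{\dagger,\leq a}(\nabla^{-t}(g_y^{[p]})\times h_z)\rangle$, where $\eta_f = \eta_f^a$ is the Frobenius eigenlift chosen in \S\ref{subsection: AJ imgae}.

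The heart of the argument is then to identify $H^{\dagger,\leq a}(\nabla^{-t}(g_y^{[p]})\times h_z)$ with (a slope-projected, $\pr_{r_1}$-projected version of) $G^{[p]}\times\omega_h$ up to the constant $(-1)^{t-1}(t-1)!$. Here I would compare $q$-expansions: on one side, the formula of \S\ref{subsect: special values} gives
$$H^\dagger(\nabla^{-t}(g_y^{[p]}V_{y,0})\times h_z V_{z,0})(q) = \sum_{j=0}^{b-1}(-1)^j C_j\frac{\theta^j(\theta^{-t-j}g_y^{[p]}(q)\times h_z(q))}{(x-1-j)\cdots(x-2)}V_{x,0},$$
while on the other side the primitive $G^{[p]}$ has the explicit polynomial $q$-expansion $\sum_{i=0}^{r_2}(-1)^i i!\binom{r_2}{i}\theta^{-i-1}g^{[p]}(q)\,\omega_{\can}^{r_2-i}\eta_{\can}^i$, and the projection $\pr_{r_1}$ of $G^{[p]}\times\omega_h$ must be computed via the explicit symmetrization of Remark \ref{remark: projection}. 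The combinatorial identity to be verified is that, after pairing against $\eta_f$ (which kills $\Fil^{r_1+1}$ and so only the top $\eta_{\can}$-component survives), the two expressions agree up to $(t-1)!$ and a sign; the binomial coefficients $C_j = \binom{-t}{j}\prod_{i=0}^{j-1}(y-t-1-i)$ and the falling-factorial denominators should telescope against the $i!\binom{r_2}{i}$ and the combinatorial weights coming from the $(t-1)$ overlapping Poincar\'e contractions. This matching of explicit $q$-expansion formulae is the step I expect to be the main obstacle — not conceptually deep, but the bookkeeping of signs, factorials, and the reindexing $b = y-t$, $r_1 = x-2$, $t-1$ overlaps is delicate and must reproduce exactly $(t-1)!$.

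Finally I would plug the resulting identity into Theorem \ref{theorem: AJ map of diagonal cycle}, which expresses $\AJ_p(\Delta_{x,y,z})(\eta_f\otimes\omega_g\otimes\omega_h)$ as $\frac{\scrE_1(f)}{\scrE(f,g,h)}\langle\eta_f, e^{\leq a}\pr_{r_1}(G^{[p]}\times\omega_h)\rangle$. Combining this with the expression for $\scrL_p^f$ derived above, the pairing $\langle\eta_f, e^{\leq a}\pr_{r_1}(G^{[p]}\times\omega_h)\rangle$ cancels between the two sides, leaving
$$\scrL_p^f(\bff,\bfg,\bfh)(x,y,z) = (-1)^{t-1}\frac{\scrE(f,g,h)}{(t-1)!\,\scrE_0(f)\scrE_1(f)}\,\AJ_p(\Delta_{x,y,z})(\eta_f\otimes\omega_g\otimes\omega_h),$$
which is the claim. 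Along the way I would use Lemma \ref{lemma: pairing} to replace $e^{\leq a}$-projected classes by unprojected ones when pairing against the Frobenius-eigenvector $\eta_f$ (whose eigenvalue $\alpha_{f^*}$ has slope $a$), and the vanishing $e^{\leq a}(G^{[p]}\cdot V\omega_h) = 0$ to discard the $p$-depletion correction terms, exactly mirroring \cite[\S3.4]{DR}.
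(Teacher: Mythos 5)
Your proposal follows essentially the same route as the paper's own proof: translate the Petersson product from level $Np$ to level $N$ to extract $\scrE_0(f)$, rewrite the numerator as a Poincar\'e pairing against the Frobenius eigenlift $\eta_f^a$, and reduce everything to the combinatorial $q$-expansion identity $(-1)^{t-1}(t-1)!\,H^\dagger(\nabla^{-t}(g_y^{[p]})\times h_z)=H^\dagger(\pr_{r_1}(G^{[p]}\times h_z))$, which is exactly the paper's key Lemma \ref{lemma: noc forms}, before invoking Theorem \ref{theorem: AJ map of diagonal cycle}, Lemma \ref{lemma: pairing}, and the vanishing of $e^{\leq a}(G^{[p]}\cdot Vh)$. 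The bookkeeping you flag as the main obstacle is carried out explicitly in the paper via the reindexing $\alpha=s-(t-1)$ and the identity $(-1)^j\binom{-t}{j}\prod_{i=0}^{j-1}(b-1-i)=t(t+1)\cdots(t+j-1)\binom{b-1}{j}$, and it closes exactly as you predict.
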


The strategy of the proof is the following:
from the two overconvergent forms $g_y, h_z$, there are two ways to construct an overconvergent form of weight $x$ (or an element in $H^1_{\para}(X, \calH^{r_1})$).

One is $H^\dagger (\nabla^{-t}(g_y^{[p]} V_{y, 0}) \times h_z V_{z, 0})$, which appears in the triple product $L$-function.
The other is $H^\dagger(\pr_{r_1}( G^{[p]} \times \omega_h))$, which is related to the $p$-adic Abel-Jacobi image.
Hence, proving the $p$-adic Gross--Zagier formula is essentially equal to relating the two overconvergent forms (or cohomological classes) mentioned above.


The crucial observation is the following lemma, which should be considered the main novelty of this paper.
\begin{lemma} \label{lemma: noc forms}
Let notations be as before, then we have
$$(-1)^{t-1}(t-1)! \cdot H^\dagger (\nabla^{-t}(g_y^{[p]} V_{y, 0}) \times h_z V_{z, 0}) = H^\dagger(\pr_{r_1}( G^{[p]} \times h_z)) V_{x, 0}.$$
\end{lemma}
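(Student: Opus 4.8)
The plan is to prove Lemma~\ref{lemma: noc forms} by comparing the $q$-expansions of the two sides directly, using the explicit formula for $H^\dagger$ from Proposition~\ref{prop: overconvergent projection} and the explicit polynomial $q$-expansion of $G^{[p]}$ that was fixed in \S\ref{subsection: AJ imgae}. Both sides are sections of $\frakw_x$ over $\calU$, hence are determined by their image under the $q$-expansion map to $\Lambda((q))$, so it suffices to check the scalar coefficient of $V_{x,0}$ on each side. From the computation in \S\ref{subsect: special values}, the left-hand side has $q$-expansion
$$\sum_{j=0}^{b-1} (-1)^j C_j \cdot \frac{\theta^j\bigl(\theta^{-t-j} g_y^{[p]}(q)\times h_z(q)\bigr)}{(x-1-j)(x-j)\cdots(x-2)}\, V_{x,0},$$
with $C_j = \binom{-t}{j}\prod_{i=0}^{j-1}(y-t-1-i)$ and $b = y-t$. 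For the right-hand side I would first expand $\pr_{r_1}(G^{[p]}\times h_z)$ in the basis $\omega_{\can}^{r_1-i}\eta_{\can}^i$ using the combinatorics of the projection described in Remark~\ref{remark: projection}, then apply Proposition~\ref{prop: overconvergent projection} once more to get the scalar coefficient of $V_{x,0}$ as an explicit sum involving $\theta^i$ applied to the coefficients of $G^{[p]}(q)$, which themselves involve $\theta^{-i-1}g^{[p]}(q)$ (note $g^{[p]} = g_y^{[p]}$ since the $p$-depletion only depends on the $q$-expansion).

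The key steps, in order, would be: (1) write out the $q$-expansion of $G^{[p]}\times\omega_h$ as a section of $\calH^{r_2}\otimes\calH^{r_3}\otimes\Omega^1_X$ on the Tate curve, using the given formula $G^{[p]}(q) = \sum_{i=0}^{r_2}(-1)^i i!\binom{r_2}{i}\theta^{-i-1}g^{[p]}(q)\,\omega_{\can}^{r_2-i}\eta_{\can}^i$ and $\omega_h = h_z(q)\,\omega_{\can}^{r_3}$; (2) apply the explicit symmetrization/contraction rule for $\pr_{r_1}$ from Remark~\ref{remark: projection} to express $\pr_{r_1}(G^{[p]}\times h_z)$ in terms of $\omega_{\can}^{r_1-j}\eta_{\can}^j = V_{x-2,j}$, collecting the binomial coefficients that arise from the $(t-1)$ Poincar\'e contractions and the symmetrization; (3) feed the resulting $q$-expansion into Proposition~\ref{prop: overconvergent projection}; (4) simplify both resulting scalar series and match them term by term, tracking the $\theta^{-t-j}$ versus $\theta^{-i-1}$ shifts and reindexing so that the powers of $\theta$ agree; (5) verify that the net combinatorial factor is exactly $(-1)^{t-1}(t-1)!$. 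A useful intermediate reduction is that $\theta$ commutes past the projection in a controlled way and that $\theta^{-t-j}g_y^{[p]}\times h_z$ and $\theta^j$ of it can be matched against $\theta^{i}$ acting on $\theta^{-i-1}g^{[p]}\times h_z$ after shifting indices by the relation $x = y+z-2t$, i.e.\ $r_1 = r_2+r_3-2(t-1)$.

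I expect the main obstacle to be step~(2) together with step~(5): correctly bookkeeping the combinatorial coefficients produced by the projection $\pr_{r_1}$ — which involves choosing the overlap sets $B\cap C$ of size $t-1$, performing $t-1$ Poincar\'e pairings, and then symmetrizing — and showing that after the dust settles the two explicit hypergeometric-type sums differ precisely by the factor $(-1)^{t-1}(t-1)!$. The binomial identities needed here are of Vandermonde/Chu type, and the denominators $(x-1-j)\cdots(x-2)$ coming from $H^\dagger$ must be reconciled with the falling-factorial coefficients $C_j$ and with the $i!\binom{r_2}{i}$ factors in $G^{[p]}$; getting the index ranges to line up (recalling $b-1\le x$, proved in \S\ref{subsect: special values}, so no terms are lost) is where the argument is most delicate. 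Once the scalar coefficients of $V_{x,0}$ agree as elements of $\Lambda((q))$, injectivity of the $q$-expansion map on $H^0(\calU,\frakw_x)$ finishes the proof.
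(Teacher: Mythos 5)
Your proposal is correct and follows essentially the same route as the paper: both sides are compared via their polynomial $q$-expansions, the projection $\pr_{r_1}$ is made explicit through the recipe of Remark~\ref{remark: projection} (producing the index shift $s\mapsto s-(t-1)$ and the binomial $\binom{r_2-(t-1)}{s-(t-1)}=\binom{b-1}{j}$), $H^\dagger$ is applied via Proposition~\ref{prop: overconvergent projection}, and the two coefficient sequences are matched term by term. The only slight overestimate is the anticipated Vandermonde/Chu-type identities: the final comparison reduces to the elementary identity $(-1)^j\binom{-t}{j}\prod_{i=0}^{j-1}(b-1-i)=t(t+1)\cdots(t+j-1)\binom{b-1}{j}$, so no summation identity is needed.
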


\begin{proof}
We prove this equality by examining their $q$-expansions.

First, we observe that the two forms $g$ and $g_y = (1- \beta_g V)g$ has the same $p$-depletion.
So we may replace $g_y^{[p]}$ with $g^{[p]}$.

Let $b:= y- t$ be as before, recall that
\begin{equation}
H^\dagger (\nabla^{-t} (g^{[p]} V_{y, 0}) \times h V_{z,0} ) = \sum_{j=0}^{b-1} (-1)^j \binom{-t}{j} \prod_{i=0}^{j-1} (y- t- 1- i) \frac{\theta^j (\theta^{-t-j} g^{[p]} \times h)}{(x-2-j+1) \cdots (x-2)} V_{x, 0}.
\end{equation}

On the other hand, recall that we have
$$G^{[p]} = \sum_{s=0}^{r_2} (-1)^s s! \binom{r_2}{s} \theta ^{-s-1} g^{[p]} (q) \omega_{\can}^{r_2-s} \eta_{\can}^s$$
as the primitive of the overconvergent form $g^{[p]}$ with respect to the Gauss--Manin connection $\nabla$.
We also view $h_z$ as a section of $\calH^{r_3} \otimes \Omega^1_X$ over $\calU$
and write its $q$-expansion as $h_z(q) \omega_{\can}^{r_3} \cdot \frac{dq}{q}$.




Following the recipe in Remark \ref{remark: projection}, we have the polynomial $q$-expansion
$$\pr_{r_1}(G^{[p]} \times h_z )= \sum_{s = t-1}^{r_2} (-1)^s s! \binom{r_2-(t-1)}{s-(t-1)} \theta ^{-s-1} g^{[p]} \times h_z \ \omega_{\can}^{r_1-(s-(t-1))} \eta_{\can}^{s-(t-1)} \cdot \frac{dq}{q} .$$
After a change of variable $\alpha := s-(t-1)$ and noticing that $r_2-(t-1) = b-1$, it can be written as
$$\pr_{r_1}(G^{[p]} \times h_z )= \sum_{\alpha =0}^{b-1} (-1)^{\alpha+(t-1)} (\alpha+t-1)! \binom{b-1}{\alpha} \theta ^{-t-\alpha} g^{[p]} \times h_z \ \omega_{\can}^{r_1-\alpha} \eta_{\can}^{\alpha} \cdot \frac{dq}{q} .$$
The overconvergent projection $H^\dagger (\pr_{r_1}(G^{[p]} \times h_z ))$ then takes the form
\begin{equation}
\sum_{\alpha =0}^{b-1} (-1)^{2\alpha+(t-1)} (\alpha+t-1)! \binom{b-1}{\alpha} \frac{\theta^\alpha (\theta^{-t-\alpha} g^{[p]} \times h_z)}{(k-2-\alpha+1) \cdots (k-2)} \ \omega_{\can}^{r_1} \cdot \frac{dq}{q}.
\end{equation}

The lemma is then reduced to comparing the numbers
$$ (-1)^j \binom{-t}{j} \prod_{i=0}^{j-1} (b- 1- i) $$
and 
$$(-1)^{t-1} (j+t-1)! \binom{b-1}{j} .$$
Observe that
\begin{align*}
    (-1)^j \binom{-t}{j} \prod_{i=0}^{j-1} (b- 1- i) &= (-1)^j \frac{(-t)(-t-1) \cdots (-t-j+1)}{j!} \prod_{i=0}^{j-1} (b- 1- i) \\
    &=(-1)^{2j} \cdot t(t+1)\cdots (t+j-1) \cdot \frac{1}{j!} \cdot \prod_{i=0}^{j-1} (b- 1- i)\\
    &= t(t+1)\cdots (t+j-1)\frac{1}{j!} \cdot j! \binom{b-1}{j}\\
    &= t(t+1)\cdots (t+j-1) \cdot \binom{b-1}{j}.
\end{align*}
Hence we have
$$ (-1)^{t-1} (t-1)! \cdot \left ((-1)^j \binom{-t}{j} \prod_{i=0}^{j-1} (b- 1- i) \right ) = (-1)^{t-1} (j+t-1)! \binom{b-1}{j}$$
for all $0 \leq j \leq b-1$.
The lemma now follows.
In fact, one sees that the equality holds even without applying the overconvergent projection.
\end{proof}

\begin{remark}
In the ordinary case (c.f. \cite[Prop~2.9]{DR}), the proof is easier as one may replace the overconvergent projection with the unit root splitting.
In other words, one only needs to prove the equality between the first terms of the two polynomial $q$-expansions.
\end{remark}


\begin{cor} \label{cor: middle of main proof}
With notations as before, we have
\begin{align*}
    \AJ(\Delta_{x, y, z})(\eta_f \otimes \omega_g \otimes \omega_h) &= \frac{\scrE_1(f)}{\scrE(f, g, h)} \cdot \langle \eta_f, e^{\leq a}(\pr_{r_1}( G^{[p]} \times h) \rangle \\
    &= \frac{\scrE_1(f)}{\scrE(f, g, h)} \cdot \langle \eta_f, e^{\leq a}(\pr_{r_1}( G^{[p]} \times h_z) \rangle \\
    &= (-1)^{t-1} (t-1)!\frac{\scrE_1(f)}{\scrE(f, g, h)} \cdot \langle \eta_f, e^{\leq a} H^\dagger (\nabla^{-t}(g^{[p]}) \times h_z)\rangle.
\end{align*}
\end{cor}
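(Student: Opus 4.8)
The plan is to assemble Corollary~\ref{cor: middle of main proof} as a direct three-step rewriting: start from Theorem~\ref{theorem: AJ map of diagonal cycle}, replace $h$ by $h_z$ inside the projection-and-slope operator, and finally substitute the identity of Lemma~\ref{lemma: noc forms} into the pairing. Each step should be a one-line manipulation; the content is already contained in the previously established results, so Corollary~\ref{cor: middle of main proof} is really a bookkeeping statement that prepares the final comparison with $\scrL_p^f$.

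First I would recall the conclusion of Theorem~\ref{theorem: AJ map of diagonal cycle} verbatim, giving the first displayed equality $\AJ_p(\Delta_{x,y,z})(\eta_f\otimes\omega_g\otimes\omega_h)=\frac{\scrE_1(f)}{\scrE(f,g,h)}\langle\eta_f,\,e^{\leq a}\pr_{r_1}(G^{[p]}\times\omega_h)\rangle$. For the second equality I would note that $h_z=(1-\beta_hV)h$, or more to the point that $\omega_h$ and $h_z$ (viewed as a section of $\calH^{r_3}\otimes\Omega^1_X$ over $\calU$) differ by something killed by $e^{\leq a}$ after forming the product with $G^{[p]}$ and projecting: exactly as in the proof of Theorem~\ref{theorem: AJ map of diagonal cycle}, $e^{\leq a}(G^{[p]}\cdot V\omega_h)=0$ because $e^{\leq a}$ is a power series in $U$ with no constant term and $V$ raises the $U$-slope out of range. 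Hence $e^{\leq a}\pr_{r_1}(G^{[p]}\times\omega_h)=e^{\leq a}\pr_{r_1}(G^{[p]}\times h_z)$ inside $H^1_{\para}(X,\calH^{r_1}(1-t))$, which gives the second line. (I should double-check the variance here — whether the relevant replacement is $\omega_h\rightsquigarrow h_z$ or the reverse $p$-stabilization direction — but in either case the discrepancy is a $V$-term annihilated by $e^{\leq a}$, so the equality stands.)

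For the third equality I would invoke Lemma~\ref{lemma: noc forms}, which says $(-1)^{t-1}(t-1)!\,H^\dagger(\nabla^{-t}(g_y^{[p]}V_{y,0})\times h_zV_{z,0})=H^\dagger(\pr_{r_1}(G^{[p]}\times h_z))V_{x,0}$, together with the remark in its proof that the identity in fact already holds at the level of polynomial $q$-expansions before applying $H^\dagger$ — so $\pr_{r_1}(G^{[p]}\times h_z)$ and $(-1)^{t-1}(t-1)!\,\nabla^{-t}(g^{[p]})\times h_z$ represent the same class in $H^1_{\para}(X,\calH^{r_1}(1-t))$. Substituting this into the pairing and pulling the scalar $(-1)^{t-1}(t-1)!$ out of the linear pairing yields the third line $(-1)^{t-1}(t-1)!\frac{\scrE_1(f)}{\scrE(f,g,h)}\langle\eta_f,\,e^{\leq a}H^\dagger(\nabla^{-t}(g^{[p]})\times h_z)\rangle$, and since $e^{\leq a}$ commutes with the scalar and with $H^\dagger$ on the relevant slope component, nothing further is needed.

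The main obstacle, such as it is, is purely notational: keeping straight the three incarnations of the same object (a section of $\calH^{r_1}(1-t)\otimes\Omega^1_X$ over $\calU$, the overconvergent form $H^\dagger$ of it as a section of $\frakw_x$, and its class in parabolic cohomology), and making sure the slope projection $e^{\leq a}$, the overconvergent projection $H^\dagger$, and the pairing $\langle\eta_f,-\rangle$ are being applied to compatible representatives — in particular that Lemma~\ref{lemma: pairing} has been used to legitimately insert $e^{\leq a}$ in the first place, which is exactly what licenses line one. There is no new mathematical input beyond Theorems~\ref{theorem: AJ map of diagonal cycle}, Lemma~\ref{lemma: noc forms}, and the vanishing $e^{\leq a}(G^{[p]}\cdot V\omega_h)=0$; the corollary is the hinge that, combined with the computation of $\scrL_p^f(\mathbf{f},\mathbf{g},\mathbf{h})(x,y,z)$ in \S\ref{subsect: special values} and the pairing formula for $\overline{\eta_f}$, will give Theorem~\ref{theorem: main theoerem} in the next step.
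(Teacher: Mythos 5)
Your proposal is correct and follows the paper's argument exactly: the first equality is Theorem~\ref{theorem: AJ map of diagonal cycle}, the second follows because $e^{\leq a}$ annihilates $\pr_{r_1}(G^{[p]}\times Vh)$ (so $h$ may be replaced by $h_z=(1-\beta_h V)h$), and the third is Lemma~\ref{lemma: noc forms} together with the fact that $H^\dagger$ preserves the cohomology class. One small caveat: the correct reason the $V$-term dies is not that ``$V$ raises the $U$-slope out of range'' but that $U(G^{[p]}\cdot Vh)=(UG^{[p]})\cdot h=0$, after which any power series in $U$ without constant term annihilates it --- though since you cite the already-established vanishing $e^{\leq a}(G^{[p]}\cdot V\omega_h)=0$, the argument stands.
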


\begin{proof}
It only suffices to prove the second equation.
As $U (G^{[p]} \cdot V h) = 0$, and $e^{\leq a}$ has no constant term as a power series in $U$,
$e^{\leq a}$ annihilates $\pr_{r_1}( G^{[p]} \times V h)$.
Hence we may replace $h$ by one of its $p$-stabilization $h_z$. 
The second equality then follows. 
\end{proof}



Now back to the triple product $p$-adic $L$-function $\mathscr{L}^f_p (\mathbf{f}, \mathbf{g}, \mathbf{h})$.
By definition, its value at the point $(x, y, z)$ is
$$\frac{( f_x^*, H^{\dagger, \leq a} (\nabla^{-t} (g^{[p]}_y)  \times h_z ) )_{N,p} }{( f_x^*, f_x^* )_{N,p}}$$
where the Petersson product is on $X_1(N, p)$.

We need to translate the Petersson product on $X_1(N, p)$ back to $X_1(N)$.
Recall that $f_x$ is the $p$-stabilization of $f$.
In other words, as classes in $H^1_{\dR}(X, \calH^{r_1})$, we have $f_x = (1- \beta_f' \phi) f = (1 -\beta_f V) f$ and $f^*_x =(1 -\beta_{f^*} V) f^*$.

We now use the formula in \cite[Lemma~2.19]{DR}.
Let $\omega$ be any modular form on $X_1(N, p)$ of weight $x$, we have
$$\frac{( f_x^*, \omega )_{N,p} }{( f_x^*, f_x^* )_{N,p}} = \frac{( f_x^*, e_{f_x^*}\omega )_{N,p} }{( f_x^*, f_x^* )_{N,p}} = \frac{( f^*, \omega^0 )_N}{( f^*, f^* )_N} = \langle \eta_{f}, \omega^0 \rangle_{\dR}.$$
Here $e_{f_x^*}$ is the projection to the $f_x^*$-isotypic component, and $\omega^0$ is a modular form on $X_1(N)$ such that
$$e_{f*} \omega = (1 -\beta_{f^*}' \phi) \omega^0 = (1 -\beta_{f^*} V) \omega^0.$$
Now we take $\omega$ to be the modular form $H^{\dagger, \leq a} (\nabla^{-t} (g^{[p]}_y)  \times h_z)$, which we also view as a class in $H^1_{\para}(X, \calH^{r_1})$.
Then, 
\begin{align*}
    \langle \eta_f , \omega \rangle = \langle \eta_{f}, e_{f^*} \omega \rangle &= \langle \eta_{f}, (1 -\beta_{f^*} V) \omega^0 \rangle\\
    &=\langle \eta_{f}, \omega^0 \rangle -\beta_{f^*} \langle \eta_{f}, V \omega^0 \rangle\\
    &=\langle \eta_{f}, \omega^0 \rangle -\beta_{f^*} \langle \phi^{-1} \eta_{f}, \omega^0 \rangle\\
    &=(1- \frac{\beta_{f^*}}{\alpha_{f^*}}) \langle \eta_{f}, \omega^0 \rangle \\
    &= \scrE_0(f) \langle \eta_{f}, \omega^0 \rangle
\end{align*}
where $\scrE_0(f) := (1- \frac{\beta_{f^*}}{\alpha_{f^*}}) = (1- \frac{\beta_{f}}{\alpha_{f}}) = (1 -\beta_{f}^2 \chi_{f}^{-1}(p) p ^{1-x}) $.
As a result, 
$$\scrL_p^f (\mathbf{f}, \mathbf{g}, \mathbf{h})(x, y,z) = \frac{1}{\scrE_0(f)} \langle \eta_f, H^{\dagger, \leq a} (\nabla^{-t} (g^{[p]}_y)  \times h_z) \rangle_{\dR}.$$
This equality, together with Corollary \ref{cor: middle of main proof}, then imply 
$$\scrL_p^f (\mathbf{f}, \mathbf{g}, \mathbf{h})(x, y,z) = (-1)^{t-1} \frac{\scrE(f, g, h)}{(t-1)! \scrE_0(f) \scrE_1(f)} \times \AJ(\Delta_{x, y,z})(\eta_f \otimes \omega_g \otimes \omega_h).$$

\section{An application to equivariant BSD conjecture} \label{section: application}

In this section, we will show an application of the $p$-adic Gross--Zagier formula for finite slope families.


\subsection{Coleman families and big Galois representations} \label{subsection: Coleman families}
Let $\calW$ be the weight space defined over $\bbQ_p$ as before.
In this section, we will focus on families defined over wide open disks in $\calW$ as in \cite[\S~4]{LZ-Rankin_Eisenstein}.

\begin{definition}
    Suppose $\calV$ is a open disk in $\calW$ defined over a finite extension $L$ over $\bbQ_p$.
    We denote by $\scrO_{\calV}$ (resp. $\Lambda_{\calV}$) the ring of bounded (resp. bounded by $1$) rigid analytic functions on $\calV$.
    In particular, if $k_0 \in \bbZ$ is a center of $\calV$,     
    $\Lambda_\calV \cong \calO_L \llbracket \bm{k}- k_0 \rrbracket$ for some formal variable $\bm{k}$ and $\scrO_\calV = \Lambda_\calV [\frac{1}{p}]$.
    We will always endow $\Lambda_\calV$ with the topology induced by its maximal ideal $m_{\calV}$.
    We let $\kappa_\calV : \bbZ_p^\times \rightarrow \Lambda_\calV^\times$ be the universal character (shifted by $-2$) on $\calV$.
    In other words, $\kappa_\calV (t) = \omega(t)^{k_0-2} \langle t \rangle^{\mathbf{k}-2}$ for $t \in \bbZ_p^\times$, where $\omega$ is the Teichm\"{u}ller character and $\langle t \rangle := t/ \omega(t) \in 1 +p \bbZ_p$.
\end{definition}

\begin{definition}
    Let $\calV \subset \calW$ be an open disk such that the classical weights $\calV \cap \bbZ_{\geq 0}$ is dense in $\calV$.
    A Coleman family $\bff$ (of level $N_f$ and nebentypus character $\chi_f$) defined over $\calV$ of slope $a \in \bbQ_{\geq 0}$ is a power series 
    $$ \bff = \sum_{n \geq 1} a_n(\bff) q^n \in \Lambda_\calV \llbracket q \rrbracket$$
    such that $a_1(\bff) =1$, $a_p(\bff)$ is invertible in $\scrO_{\calV}$ with norm $p^{-a}$, and for all but finitely many classical weights $k \in \calV$, the specialization $\bff_k \in \calO_L \llbracket q \rrbracket$ is the $q$-expansion of a classical normalized eigenform of weight $k$, level $\Gamma_1(N_f, p)$ and nebentypus $\chi_f$.

    We will often write $\calV_f = \calV$ and similarly $\scrO_f = \scrO_{\calV_f}$, $\Lambda_f = \Lambda_{\calV_f}$, $\kappa_f = \kappa_{\calV_f}$ to specify their dependence to the family $\bff$.
\end{definition}

One now wants to construct a $p$-adic family of Galois representations attached to a finite slope family.
It turns out that this can be achieved by considering certain big \'{e}tale sheaf on modular curves (c.f. \cite{Overconvergent_ES} and \cite{p-adic_deformation}).
We shall briefly recall several notations and results without any proofs, mainly following the setups in \cite[\S~4]{reciprocity_balanced}.

As we will consider both families and their specializations to classical weights,
we use the terminology `big' to refer to anything defined for families and `small' for the specialized case.

\paragraph{Locally analytic functions and distributions.}

\begin{definition}
    For any integer $m \geq 0$, we write $LA_m(\bbZ_p, \Lambda_\calV)$ for the space of functions $\bbZ_p \rightarrow \Lambda_{\calV}$ that are analytic on every open disk $z + p^m \bbZ_p$ of $\bbZ_p$.
\end{definition}

We define $T := \bbZ_p^\times \times \bbZ_p$ and $T' = p\bbZ_p \times \bbZ_p^\times$.
One has right actions of $M_{2 \times 2}(\bbZ_p)$ on $T$ and $T'$.
An easy calculation shows that $T$ is preserved by 
$$ \Sigma_0(p) = \begin{pmatrix}
    \bbZ_p^\times & \bbZ_p \\ p \bbZ_p & \bbZ_p
\end{pmatrix}$$
and  $T'$ is preserved by 
$$ \Sigma'_0(p) = \begin{pmatrix}
    \bbZ_p& \bbZ_p \\ p \bbZ_p &  \bbZ_p^\times 
\end{pmatrix}.$$
In particular, they are both preserved by scalar multiplication by $\bbZ_p^\times$ and the subgroup $\Gamma_0(p \bbZ_p):= \Sigma_0(p) \cap \Sigma'_0 (p)$.

\begin{definition}
    For any $m \geq 0$, we let $A_{\calV, m}(T)$ be the space of functions
    $f : T \rightarrow \Lambda_{\calV}$
    such that 
    $$f(\gamma \cdot t) = \kappa_{\calV}(\gamma) f(t)$$
    for all $\gamma \in \bbZ_p^\times$ and $f(1 ,z) \in LA_m(\bbZ_p, \Lambda_\calV )$.
    This space is a $\Lambda_\calV$-module and we endow it with the topology induced by $m_\calV$.
    Similarly, one defines $A_{\calV, m}(T')$ to be the space of functions $f : T' \rightarrow \Lambda_\calV$ with $f(\gamma \cdot t) = \kappa_{\calV}(\gamma) f(t)$ for all $\gamma \in \bbZ_p^\times$ and $f(pz, 1) \in LA_m(\bbZ_p, \Lambda_\calV)$.
\end{definition}

\begin{definition}
    We define 
    $$D_{\calV, m}(T) := \Hom_{\Lambda_\calV}(A_{\calV, m}(T), \Lambda_\calV) $$
    and $D_{\calV, m}(T')$ in a similar manner.
    We endow $D_{\calV, m}(T)$ with the weak-$\ast$ topology, which is the weakest topology such that the evaluation-at-$f$ maps are continuous for all $f \in A_{\calV, m}(T)$.
\end{definition}

\begin{remark}
    As we will frequently give statements that work both for $T$ and T$'$,
    we will use the convention $T^\cdot$ as in \cite{reciprocity_balanced} to denote either $T$ or $T'$.
\end{remark}

One can also define the small versions $A_{k, m}(T^\cdot)$ and $D_{k, m}(T^\cdot)$, where the pair $(\Lambda_\calV, \kappa_\calV)$ is replaced by $(\calO_L, k)$ with $k \in \calV \cap \bbZ$.
Then one has specialization maps (c.f. \cite[\S~4.1.3]{reciprocity_balanced})
$$A_{\calV, m}(T^\cdot) \xrightarrow{\rho_k} A_{k, m}(T^\cdot) \textrm{ and } D_{\calV, m}(T^\cdot) \xrightarrow{\rho_k} D_{k, m}(T^\cdot)$$
induced by quotient by the maximal ideal $\mathfrak{m}_k$ corresponding to the point $k$.

\begin{remark}[{\cite[\S~4]{reciprocity_balanced}}]
    The objects $A_{\calV, m}(T^\cdot)$ and $D_{\calV, m}(T^\cdot)$ as well as the small versions have natural actions of $\Gamma = \Gamma_1(N, p)$.
    In particular, there are notions of Hecke operators $T_\ell^\cdot, U^\cdot=  U_p^\cdot$ and Atkin--Lehner involution $w_p$ on $H^1(\Gamma, A_{\calV, m}(T^\cdot))$ and $H^1(\Gamma, D_{\calV, m}(T^\cdot))$.
\end{remark}

Later we will introduce big \'{e}tale sheaves whose construction is based on locally analytic functions and distributions.
For this purpose, we need the following lemma.

\begin{lemma}[{\cite[Lemma~4.2.7]{LZ-Rankin_Eisenstein} }]
    We may write $D_{\calV, m}(T^\cdot)$ as an inverse limit
    $$ D_{\calV, m}(T^\cdot) = \varprojlim_{n \geq 0} D_{\calV, m}(T^\cdot)/ \Fil^n D_{\calV, m}(T^\cdot)$$
    where $\Fil^n D_{\calV, m}(T^\cdot)$ is a decreasing filtration preserved by $\Sigma^\cdot_0(p)$ and each quotient is finite of $p$-power order.
\end{lemma}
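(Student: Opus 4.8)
I will follow \cite[Lemma~4.2.7]{LZ-Rankin_Eisenstein}, and sketch the construction here since the filtration $\Fil^n$ is exactly the input needed to manufacture the big \'etale sheaves in the next subsection. The plan has three steps: make the $\Lambda_\calV$-module structure of $A_{\calV, m}(T^\cdot)$ explicit, dualize to describe $D_{\calV, m}(T^\cdot)$ as a product, and then replace the naive coordinatewise inverse limit by a $\Sigma^\cdot_0(p)$-equivariant one.

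First I would use the homogeneity condition $f(\gamma t) = \kappa_\calV(\gamma) f(t)$ for $\gamma \in \bbZ_p^\times$ to identify a function in $A_{\calV, m}(T^\cdot)$ with its restriction to a set of representatives for $T^\cdot/\bbZ_p^\times$ (namely $\{1\}\times\bbZ_p$ for $T$ and $p\bbZ_p\times\{1\}$ for $T'$); this restriction identifies $A_{\calV, m}(T^\cdot)$ with $LA_m(\bbZ_p, \Lambda_\calV)$. Decomposing $\bbZ_p$ into its $p^m$ classes modulo $p^m$ and expanding each local piece as a power series gives an isomorphism of $\Lambda_\calV$-modules between $A_{\calV, m}(T^\cdot)$ and the module $c_0(\Lambda_\calV, I)$ of sequences $(c_\alpha)_{\alpha\in I}$ in $\Lambda_\calV$, indexed by a countable set $I$, which tend to $0$ in the $m_\calV$-adic topology; explicitly, the basis vector $e_{i,j}$ is the monomial $s^j$ on the $i$-th residue disk. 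Dualizing, one gets $D_{\calV, m}(T^\cdot) = \Hom_{\Lambda_\calV}(A_{\calV, m}(T^\cdot), \Lambda_\calV) \cong \prod_{\alpha\in I}\Lambda_\calV$, because $m_\calV$-adic convergence of $(c_\alpha)$ makes $\sum_\alpha d_\alpha c_\alpha$ converge in $\Lambda_\calV$ for every $(d_\alpha)$. Since $\Lambda_\calV\cong\calO_L\llbracket\bm{k}-k_0\rrbracket$ is $m_\calV$-adically complete with each $\Lambda_\calV/m_\calV^n$ finite of $p$-power order, this already displays $D_{\calV, m}(T^\cdot)$ as an inverse limit of finite $p$-groups, namely $\varprojlim_{(J,n)}\big(\prod_{\alpha\in J}\Lambda_\calV/m_\calV^n\big)$ over finite subsets $J\subseteq I$.

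The remaining and delicate point is equivariance. Here I would first check directly that $A_{\calV, m}(T^\cdot)$ is stable under $\Sigma^\cdot_0(p)$ --- the entries in $p\bbZ_p$ shift the monomial degree on each residue disk only by a bounded amount, thanks to their $p$-divisibility, so level-$m$ analyticity is preserved --- and that the same computation shows $\Sigma^\cdot_0(p)$ moves the ``degree'' bifiltration $\Fil^n_{\deg} A_{\calV, m}(T^\cdot)$, spanned by the elements $m_\calV^{\max(0,\,n-j)}e_{i,j}$, by at most a bounded shift, hence stabilizes a cofinal subfamily. Then I would \emph{define} $\Fil^n D_{\calV, m}(T^\cdot)$ to be the submodule of distributions $\mu$ with $\mu\big(\Fil^n_{\deg}A_{\calV, m}(T^\cdot)\big)\subseteq m_\calV^n\Lambda_\calV$; as both $\Fil^n_{\deg}A_{\calV, m}(T^\cdot)$ and $m_\calV^n\Lambda_\calV$ are $\Sigma^\cdot_0(p)$-stable, so is $\Fil^n D_{\calV, m}(T^\cdot)$. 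Comparing with the coordinate description of the previous step then gives $\bigcap_n\Fil^n D_{\calV, m}(T^\cdot)=0$, finiteness and $p$-power order of each $D_{\calV, m}(T^\cdot)/\Fil^n D_{\calV, m}(T^\cdot)$, and completeness $D_{\calV, m}(T^\cdot)=\varprojlim_n D_{\calV, m}(T^\cdot)/\Fil^n D_{\calV, m}(T^\cdot)$.

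I expect the bounded-shift verification in the third step to be the main obstacle: the naive $m_\calV$-adic filtration $m_\calV^n D_{\calV, m}(T^\cdot)$ is equivariant but has infinite quotients (because $I$ is infinite), while a naive coordinatewise truncation has finite quotients but is not equivariant, so one is forced to the degree-weighted bifiltration, and the only thing that is not pure bookkeeping is that the unipotent part of $\Sigma^\cdot_0(p)$ disturbs this bifiltration by a controlled amount --- which is precisely where the level $m$, equivalently the radius of analyticity, enters essentially.
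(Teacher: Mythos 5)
The paper itself gives no proof of this lemma---it is quoted directly from Loeffler--Zerbes---so your proposal must be measured against the argument in the cited source. Your first two steps are correct and are exactly the right preliminaries: restriction to a fundamental domain for the $\bbZ_p^\times$-action identifies $A_{\calV,m}(T^\cdot)$ with $LA_m(\bbZ_p,\Lambda_\calV)$, the monomials $e_{i,j}$ on the $p^m$ residue disks identify this with the module of $m_\calV$-adically null sequences, and dualizing gives $D_{\calV,m}(T^\cdot)\cong\prod_{\alpha\in I}\Lambda_\calV$. The gap is in your definition of $\Fil^n D_{\calV,m}(T^\cdot)$. Unwinding the condition $\mu\big(\Fil^n_{\deg}A_{\calV,m}(T^\cdot)\big)\subseteq m_\calV^n$ coordinate by coordinate: for $j\ge n$ one has $\max(0,n-j)=0$, so $e_{i,j}$ itself lies in $\Fil^n_{\deg}A_{\calV,m}(T^\cdot)$ and the condition reads $\mu(e_{i,j})\in m_\calV^n$; for $j<n$ it reads $m_\calV^{n-j}\mu(e_{i,j})\subseteq m_\calV^n$, i.e.\ $\mu(e_{i,j})\in m_\calV^{j}$. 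Hence $D/\Fil^n D$ surjects onto $\prod_{i}\prod_{j\ge n}\Lambda_\calV/m_\calV^{n}$, which is infinite, so the finiteness claim fails; and the functional dual to $e_{i,0}$ (sending $e_{i,0}\mapsto 1$ and all other $e_{i',j'}\mapsto 0$) lies in $\Fil^n D$ for every $n$, so the filtration is not separated and $D\neq\varprojlim_n D/\Fil^n D$ either. Both conclusions of the lemma break for your filtration.

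The repair is to put the weight $n-j$ on the distribution side rather than the function side: define
$$\Fil^n D_{\calV,m}(T^\cdot):=\{\mu \mid \mu(e_{i,j})\in m_\calV^{\max(0,\,n-j)}\ \text{for all } i,j\},$$
so that nontrivial conditions occur only in the finitely many coordinates with $j<n$. Then $D/\Fil^n D\cong\prod_{i}\prod_{j<n}\Lambda_\calV/m_\calV^{n-j}$ is finite of $p$-power order, $\bigcap_n\Fil^n D=0$, and $\varprojlim_n D/\Fil^n D=\prod_{i,j}\Lambda_\calV=D$. Equivariance is then not a ``bounded shift'' phenomenon but an exact stability: writing $e_{i,j}\cdot\gamma=\sum_{i',j'}c_{j'}\,e_{i',j'}$ for $\gamma\in\Sigma^\cdot_0(p)$, each increase of monomial degree is accompanied by a factor in $m_\calV$ (coming from the $p$-divisibility of the lower-left entry of $\gamma$, from the substitution $z=i+p^m s$, or from $u_{\kappa_\calV}$), so $c_{j'}\in m_\calV^{j'-j}$ for $j'>j$, which is precisely the divisibility needed for $\gamma\mu$ to again satisfy $\mu(e_{i,j})\in m_\calV^{n-j}$. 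This computation is where the level $m$ and the analyticity hypothesis on $\kappa_\calV$ enter, as you correctly anticipated; but with the filtration oriented correctly there is no need to pass to a cofinal subfamily.
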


\paragraph{Big \'{e}tale sheaves.}

Let $Y = Y_1(N, p)$ be the affine modular curve defined over $\bbZ [1/Np]$.
Fix a geometric point $\eta: \Spec (\overline{\bbQ}) \rightarrow Y$ and denote by $\mathcal{G}$ the \'{e}tale fundamental group $\pi_1^{\mathrm{\acute e t}} (Y, \eta)$, which admits a natural map $\mathcal{G} \rightarrow \Gamma_0(p \bbZ_p)$.

Let $S_f (Y_\mathrm{\acute e t})$ be the category of locally constant constructible sheaves on $Y_{\acute e t}$ with finite stalk of $p$-power order at $\eta$.
For any topological group $G$, we denote by $M_f(G)$ the category of finite sets of $p$-power order with continuous $G$-actions.

Following \cite[\S~4.2]{reciprocity_balanced}, one sees that taking stalks at $\eta$ defines an equivalence of categories between $S_f (Y_\mathrm{\acute e t})$ and $M_f (\mathcal{G})$.
We define $M_\mathrm{cts}(G)$ be the category of $G$-modules $M$ of the form $M = \cup_{i \in I} M_i$ with $M_i \in M_f(G)$ for all $i \in I$, and $M(G)$ be the category of inverse systems of objects in $M_\mathrm{cts}(G)$.
We also define similarly $S_\mathrm{cts} (Y_\mathrm{\acute e t})$ and $S (Y_\mathrm{\acute e t})$.
If $G$ is either $\mathcal{G}$ or $\Gamma_0(p \bbZ_p)$, one has a functor 
$$\cdot^\mathrm{\acute e t} : M(G) \rightarrow S(Y_\mathrm{\acute e t})$$
extending the previous equivalence.
For any $\mathscr{F} = (\mathscr{F}_i)_i \in S(Y_\mathrm{\acute e t})$, we will mainly consider the following cohomology group
$$H^i_{\mathrm{\acute e t}}(Y, \mathscr{F})= \varprojlim_i H^i_{\mathrm{\acute e t}}(Y, \mathscr{F}_i).$$

We now apply the above construction to the modules, viewed as element in $M(\Gamma_0(p \bbZ_p))$,
\begin{align*}
    A_{\calV, m}(T^\cdot) &= \varprojlim_n A_{\calV, m}(T^\cdot)/ m_\calV^n, \\
    D_{\calV, m}(T^\cdot)  &= \varprojlim_n D_{\calV, m}(T^\cdot)/ \Fil^n. 
\end{align*}
As a consequence, we have objects $ A_{\calV, m}(T^\cdot)^{\et}$ and $D_{\calV, m}(T^\cdot)^{\et}$ in $S(Y_{\et})$.
We will often drop the superscript $\et$ when no confusion might be made.

\begin{remark} \label{remark: overconvergent cohomology}

We conclude this subsection with a list of facts about these big \'{e}tale sheaves (c.f. \cite[\S~4.2]{reciprocity_balanced}).

\begin{enumerate}
    \item One has isomorphisms
    \begin{align*}
        H^1_{\et}(Y_{\bar{\bbQ}}, A_{\calV, m}(T^\cdot)^{\et}) &\cong H^1(\Gamma,  A_{\calV, m}(T^\cdot)), \\
        H^1_{\et}(Y_{\bar{\bbQ}}, D_{\calV, m}(T^\cdot)^{\et}) &\cong H^1(\Gamma,  D_{\calV, m}(T^\cdot)), \\
        H^1_{\et, c}(Y_{\bar{\bbQ}}, D_{\calV, m}(T^\cdot)^{\et}) &\cong H^1_c(\Gamma,  D_{\calV, m}(T^\cdot)).
    \end{align*}
    where we refer the definition of $H^1_c(\Gamma,  D_{\calV, m}(T^\cdot))$ to \cite[(77)]{reciprocity_balanced}.
    We remark that to prove these isomorphisms, one needs the fact that $A_{\calV, m}(T^\cdot)$ and $D_{\calV, m}(T^\cdot)$ are profinite.
    \item For any continuous character $\chi : \bbZ_p^\times \rightarrow \Lambda_{\calV}^\times$, one can define the twisted modules $ D_{\calV, m}(T^\cdot)(\chi)$ and $A_{\calV, m}(T^\cdot)(\chi)$.
    Their resulting \'{e}tale sheaves will be denoted by $ D_{\calV, m}(T^\cdot)(\chi)^{\et}$ and $A_{\calV, m}(T^\cdot)(\chi)^{\et}$.
    Then one has the following identifications of $G_{\bbQ}$-modules
    \begin{align*}
        H^1_{\et}(Y_{\bar{\bbQ}}, D_{\calV, m}(T^\cdot)(\chi)^{\et}) &= H^1_{\et}(Y_{\bar{\bbQ}}, D_{\calV, m}(T^\cdot)^{\et})(\chi_\bbQ'),  \\
        H^1_{\et}(Y_{\bar{\bbQ}}, A_{\calV, m}(T^\cdot)(\chi)^{\et}) &= H^1_{\et}(Y_{\bar{\bbQ}}, A_{\calV, m}(T^\cdot)^{\et})(\chi_\bbQ')
    \end{align*}
    where $\chi_\bbQ' := \chi \circ \chi_{\mathrm{cyc}}^{-1}: G_\bbQ \rightarrow \Lambda_\calV^\times$.
    Similar statements also hold for the compactly supported versions.
    \item The Hecke operator $U^\cdot$ on the $G_\bbQ$-modules 
    $$H^1(\Gamma, A_{\calV, m}(T^\cdot)), H^1(\Gamma, D_{\calV, m}(T^\cdot)), H^1_c(\Gamma, D_{\calV, m}(T^\cdot))$$
    admits slope $\leq h$ decompositions for all $h \in \bbQ_{\geq 0}$ (with possibly shrinking $\calV$).
    \item The evaluation map $A_{\calV, m}(T^\cdot) \otimes_{\Lambda_\calV} D_{\calV, m}(T^\cdot) \rightarrow \Lambda_{\calV}$ and the trace map induce a cup-product
    $$H^1(\Gamma, A_{\calV, m}(T^\cdot)) \otimes H^1_c (\Gamma, D_{\calV, m}(T^\cdot)) \rightarrow H^2_c(\Gamma, \Lambda_\calV) \cong \Lambda_\calV,$$
    under which the action of $U^\cdot$ on $H^1(\Gamma, A_{\calV, m}(T^\cdot))$ is adjoint to the one on $H^1_c(\Gamma, D_{\calV, m}(T^\cdot))$.
    For $h \in \bbQ_{\geq 0}$ and possibly shrinking $\calV$, we have a morphism of $\scrO_\calV = \Lambda_\calV [1/p]$-modules
    $$\xi^\cdot_{\calV, m} :H^1(\Gamma, A_{\calV, m}(T^\cdot))^{\leq h} \rightarrow \Hom_{\scrO_\calV} (H^1_c(\Gamma, D_{\calV, m}(T^\cdot))^{\leq h}, \scrO_\calV).$$
    \item On the other hand, the determinant map $\det : T \times T' \rightarrow \bbZ_p^\times$ induces a cup-product pairing
    $$\det: H^1(\Gamma, D_{\calV, m}(T)) \otimes_{\Lambda_\calV} H^1_c(\Gamma, D_{\calV, m}(T')) \rightarrow \Lambda_\calV$$
    with $U$ and $U'$ adjoint to each other.
    In particular, for every $h \in \bbQ_{\geq 0}$, we have isomorphisms
    $$\zeta'_{\calV, m} : \Hom_{\scrO_\calV} (H^1_c(\Gamma, D_{\calV, m}(T')^{\leq h}, \scrO_\calV ) \cong H^1(\Gamma, H^1(\Gamma, D_{\calV, m})(T))^{\leq h}$$
    and similarly
    $$\zeta_{\calV, m} : \Hom_{\scrO_\calV} (H^1_c(\Gamma, D_{\calV, m}(T)^{\leq h}, \scrO_\calV ) \cong H^1(\Gamma, H^1(\Gamma, D_{\calV, m})(T'))^{\leq h}.$$
    \item For $h \in \bbQ_{\geq 0}$ and $\calV$ sufficiently small, the composition of $\zeta_{\calV, m}$ and $\xi_{\calV, m}$ gives a morphism of $G_\bbQ$-modules
    \begin{equation}
        s_{\calV, h} : H^1(\Gamma, A_{\calV, m}(T))^{\leq h} (\kappa_{\calV}) \rightarrow H^1(\Gamma, D_{\calV, m}(T'))^{\leq h},
    \end{equation}
    where by an abuse of notation we let $\kappa_\calV: G_\bbQ \rightarrow \Lambda_\calV^\times$ be the composition of $\chi_{\mathrm{cyc}}$ and the universal character $\kappa_\calV$.
    Moreover, for every integer $k = r+2 \in \calV \cap \bbZ$ with $h < k-1$, the specialization maps induce the following commutative diagram
    $$\begin{tikzcd}
         H^1(\Gamma, A_{\calV, m}(T))^{\leq h} (\kappa_{\calV}) \arrow[d, "\rho_k"] \arrow[r, "s_{\calV, h}"] &H^1(\Gamma, D_{\calV, m}(T'))^{\leq h} \arrow[d, "\rho_k"] \\
         H^1_{\et}(Y_{\bar{\bbQ}}, \mathscr{S}_r)_L^{\leq ' h} (r) \arrow[r,"s_r"] & H^1_{\et}(Y_{\bar{\bbQ}}, \mathscr{L}_r)_L ^{\leq ' h} 
    \end{tikzcd}$$
    where $\leq ' h$ means the slope $\leq h$ part for the operator $U'$.
    The sheaves $\mathscr{S}_r$ and $\mathscr{L}_r$ are the small \'{e}tale sheaves defined in \cite[\S~2.3]{reciprocity_balanced} and $s_r$ is the natural duality between the two cohomology groups.
    Roughly speaking, the spaces $H^1_{\et}(Y_{\bar{\bbQ}}, \mathscr{S}_r)_L$ and $H^1_{\et}(Y_{\bar{\bbQ}}, \mathscr{L}_r)_L$ are where the small Galois representations attached to modular forms of weight $k$ (with coefficients in $L$) live in.
    Similarly, one has an isomorphism
    \begin{equation}
        s'_{\calV, h} : H^1(\Gamma, A_{\calV, m}(T'))^{\leq h} (\kappa_{\calV}) \rightarrow H^1(\Gamma, D_{\calV, m}(T))^{\leq h}
    \end{equation}
    with an analogous commutative diagram.
    \item Suppose that $\bff$ is a Coleman family of slope $\leq a$ defined over an open disk $\calV_f$.
    For simplicity, we will write $A_{f, m}(T^\cdot)$ for $A_{\calV_f, m}(T^\cdot)$ and $D_{f, m}(T^\cdot)$ for $D_{\calV_f, m}(T^\cdot)$.
    Then, for some $m$ and possibly shrinking $\calV_f$, one defines the big Galois representation attached to $\bff$,
    $$H^1(\Gamma, D_{f, m}(T'))^{\leq h} (1) \twoheadrightarrow V(\bff) $$
    to be the maximal $\scrO_f$-quotient on which $T'_\ell$, $U'$ and $\langle d \rangle^*$ acts respectively as multiplication by $a_\ell(\bff)$, $a_p(\bff)$ and $\chi_f(d)$ for all $\ell \nmid NP$ and $d \in (\bbZ/ N_f \bbZ)^\times$.
    Similarly, one define
    $$V^*(\bff) \hookrightarrow H^1_c(\Gamma, D_{f, m}(T))^{\leq h} (-\kappa_f)$$
    to be the maximal $\scrO_f$-submodule by using the Hecke operators $T_\ell$, $U$ and $\langle d \rangle$ instead.
    
    These two $\scrO_f$-modules are free and in fact are direct summands of $H^1(\Gamma, D_{f, m}(T'))^{\leq h} (1)$ and $H^1_c(\Gamma, D_{f, m}(T))^{\leq h} (-\kappa_f)$ respectively.
    In particular, there are natural maps
    $$\pr_f : H^1(\Gamma, D_{f, m}(T'))^{\leq h} (1) \twoheadrightarrow V(\bff) \ \textrm{ and } \pr_f^*: H^1_c(\Gamma, D_{f, m}(T))^{\leq h} (-\kappa_f) \twoheadrightarrow V(\bff)^*.$$

    If $\bff$ is a Hida family, then there is a natural short exact sequence of $\scrO_f[G_{\bbQ_p}]$-modules (c.f. \cite[(99)]{reciprocity_balanced})
    $$0 \rightarrow V(\bff)^+ \rightarrow V(\bff) \rightarrow V(\bff)^- \rightarrow 0$$
    with explicit descriptions for $V(\bff)^{\pm}$.
    However, in the finite slope case, this exact sequence does not exist, at least not in the category of Galois representations.
    To obtain an analogous result, one needs to consider the category of $(\phig)$-modules, which is the main subject in next subsection.
\end{enumerate}
\end{remark}

\subsection{On \texorpdfstring{$(\varphi, \Gamma)$}{}-modules and triangulations}


This subsection aims to recall several facts about $(\varphi, \Gamma)$-modules and develop necessary setups for defining the big logarithm maps for finite slope families.

A brief reference on $(\varphi, \Gamma)$-modules and explicit computations of the logarithm maps can be found in \cite{Berger_BK_exp_formula}.
For the definitions of various Selmer groups for $(\varphi, \Gamma)$-modules, we follow the notations in \cite[\S~1]{Benois_Generalization_L-invariant}.
Since we are interested in the case of modular forms, we will also recall several useful results in \cite[\S~6]{LZ-Rankin_Eisenstein}.

Let $\scrR$ be the Robba ring of $\bbQ_p$.
It is the ring consists of formal Laurent series over $\bbQ_p$ in one variable $\pi$ which converges in some annulus of the form $\{ p^{-\epsilon} <|z|_p < 1 \} \in \mathbb{A}^{1, \rig}$ for some $\epsilon \in \bbQ$.
There are an action of Frobenius $\varphi$ and an action of $\Gamma := \Gal(\bbQ_p(\mu_{p^\infty})/ \bbQ_p) \cong \bbZ_p^\times$ on $\scrR$, defined by 
\begin{align*}
    \tau(\pi) &= (1+\pi)^{\chi_{\mathrm{cyc}} (\tau)} - 1, \quad \tau \in \Gamma, \\
    \varphi(\pi) &= (1+\pi)^p -1,
\end{align*}
where $\chi_{\mathrm{cyc}}$ is the cyclotomic character.
There is also a left inverse $\psi$ of $\varphi$, whose definition we would like to refer to \cite[\S~I.2]{Berger_BK_exp_formula}.
\begin{definition}
    By $p$-adic representation of $G_{\bbQ_p}$, we always means a finite dimensional $\bbQ_p$-vector space with continuous linear action of $G_{\bbQ_p}$.
    A $(\varphi, \Gamma)$-module $M$ over $\scrR$ is a free $\scrR$-module of finite rank with $\scrR$-semilinear actions of $\varphi$ and $\Gamma$.
    
    Given a reduced affinoid algebra $A$ over $\bbQ_p$, let $\scrR_A := \scrR \hat{\otimes}_{\bbQ_p} A$.
    By an $A$-representation of $G_{\bbQ_p}$, we means a finitely generated locally free $A$-module $M$ with a continuous $A$-linear action of $G_{\bbQ_p}$.
    One can also define $(\varphi, \Gamma)$-modules over $\scrR_A$ similarly.
\end{definition}

As is well known, there is a functor $\mathbf{D}^\dagger_{\rig}$ from the category of $p$-adic representations of $G_{\bbQ_p}$ to the category of $(\varphi, \Gamma)$-modules over $\scrR$.
A similar functor also exists from $A$-representations to $(\varphi, \Gamma)$-modules over $\scrR_A$, which will still be denoted by $\mathbf{D}^\dagger_{\rig}$ (c.f. \cite{Nakamura_2014}).


\begin{prop}[{\cite[\S~1]{Benois_Generalization_L-invariant}}]
    Let notations be as before, then we have the following results.
    \begin{enumerate}
        \item The functor $\mathbf{D}^\dagger_{\rig}$ establishes an equivalence between the category of $p$-adic representations of $G_{\bbQ_p}$ and the category of $(\varphi, \Gamma)$-modules over $\scrR$ of slope $0$.
        \item Let $V$ be a $p$-adic representation and $D_{\bullet}$ be the functors of Fontaine for $\bullet \in \{ \dR, \cris, \mathrm{st} \}$.
        Then there are functors $\mathcal{D}_\bullet$ on the category of $(\varphi, \Gamma)$-modules over $\scrR$ such that 
        $$D_\bullet(V) = \mathcal{D}_\bullet (\mathbf{D}^\dagger_{\rig}(V))$$
        for $\bullet \in \{ \dR, \cris, \mathrm{st} \}$.
        In particular, one can define the notations of de Rham, crystalline, and semistable $(\varphi, \Gamma)$-modules over $\scrR$.
    \end{enumerate}
\end{prop}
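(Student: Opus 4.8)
The statement is a synthesis of now-standard results, and my plan is to organize the proof around the chain of equivalences that produces $\mathbf{D}^\dagger_{\rig}$, together with Berger's comparison dictionary for part (2).

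For (1) I would proceed in three steps. First, recall Fontaine's equivalence between $p$-adic representations of $G_{\bbQ_p}$ and étale $(\varphi,\Gamma)$-modules over the imperfect coefficient field $\mathbf{B}_{\bbQ_p}$, given by $V\mapsto\mathbf{D}(V):=(\mathbf{B}\otimes_{\bbQ_p}V)^{H_{\bbQ_p}}$. Second, invoke the Cherbonnier--Colmez overconvergence theorem: every $p$-adic representation is overconvergent, so $\mathbf{D}(V)$ descends canonically to a $(\varphi,\Gamma)$-module $\mathbf{D}^\dagger(V)$ over the bounded Robba ring $\mathbf{B}^\dagger_{\bbQ_p}$, and $V\mapsto\mathbf{D}^\dagger(V)$ is an equivalence onto the étale $(\varphi,\Gamma)$-modules over $\mathbf{B}^\dagger_{\bbQ_p}$. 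Third, base change along $\mathbf{B}^\dagger_{\bbQ_p}\hookrightarrow\scrR$ via $\mathbf{D}^\dagger_{\rig}(V):=\scrR\otimes_{\mathbf{B}^\dagger_{\bbQ_p}}\mathbf{D}^\dagger(V)$; here the two inputs I would cite are that this functor is fully faithful on étale objects (it preserves $H^0$ and $H^1$, by results of Liu and of Kedlaya--Pottharst--Xiao) and that, by Kedlaya's slope filtration theorem, its essential image is precisely the $(\varphi,\Gamma)$-modules over $\scrR$ that are pure of slope $0$. Composing the three equivalences gives (1).

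For (2) I would first enlarge $\scrR$ by the period elements needed to witness de Rham data: the element $t=\log(1+\pi)$, on which $\varphi$ acts by multiplication by $p$ and $\gamma\in\Gamma$ by $\chi_{\mathrm{cyc}}(\gamma)$, and a formal symbol $\ell_\pi$ playing the role of $\log\pi$, forming $\scrR[1/t]\subset\scrR_{\log}:=\scrR[1/t,\ell_\pi]$ with compatible $\varphi$, $\Gamma$, a connection, and (on $\scrR_{\log}$) a monodromy operator $N$. For a $(\varphi,\Gamma)$-module $M$ over $\scrR$ I would then set
\[
\mathcal{D}_{\cris}(M):=\bigl(M[1/t]\bigr)^{\Gamma},\qquad \mathcal{D}_{\mathrm{st}}(M):=\bigl(M\otimes_{\scrR}\scrR_{\log}\bigr)^{\Gamma},
\]
finite-dimensional $\bbQ_p$-vector spaces carrying the residual $\varphi$-action (and, on the second, the operator $N$), and I would let $\mathcal{D}_{\dR}(M)$ be the underlying space of $\mathcal{D}_{\mathrm{st}}(M)$ equipped with the filtration induced by the $t$-adic filtration on the de Rham localization $\mathbf{B}_{\mathrm{dif}}\otimes M$; one then declares $M$ crystalline (resp.\ semistable, de Rham) when the corresponding dimension equals $\operatorname{rank}_{\scrR}M$. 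The remaining task is the compatibility $D_\bullet(V)=\mathcal{D}_\bullet(\mathbf{D}^\dagger_{\rig}(V))$, which I would deduce from Berger's theorem: comparing $\mathbf{D}^\dagger_{\rig}(V)$ with its localizations at the cyclotomic places recovers $\mathbf{B}_{\mathrm{dif}}\otimes_{\bbQ_p}V$ and $\mathbf{B}^+_{\mathrm{cris}}\otimes_{\bbQ_p}V$, $\mathbf{B}^+_{\mathrm{st}}\otimes_{\bbQ_p}V$, whence taking $\Gamma$-invariants after inverting $t$ (resp.\ after adjoining $\ell_\pi$, resp.\ recording the $t$-adic filtration) returns Fontaine's $D_{\cris}(V)$, $D_{\mathrm{st}}(V)$, $D_{\dR}(V)$.

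The genuinely hard inputs are Cherbonnier--Colmez overconvergence and Kedlaya's slope theory in the third step of (1), and Berger's comparison theorem in (2); granting these, the rest is formal bookkeeping of the $\varphi$-, $\Gamma$-, connection-, and $N$-structures and of finite-dimensionality of the invariants. I expect the most delicate point to state carefully is the slope-filtration description of the essential image in (1), since this is where the condition ``étale'' is traded for the intrinsic condition ``pure of slope $0$'' over $\scrR$.
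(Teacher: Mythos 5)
The paper offers no proof of this proposition at all: it is quoted verbatim from \cite[\S~1]{Benois_Generalization_L-invariant} as a package of known results (Fontaine, Cherbonnier--Colmez, Kedlaya, Berger), so there is nothing in the text to compare your argument against line by line. Your outline of part (1) is exactly the standard chain --- Fontaine's equivalence with \'etale $(\varphi,\Gamma)$-modules over $\mathbf{B}_{\bbQ_p}$, descent to the bounded Robba ring via Cherbonnier--Colmez, base change to $\scrR$, with full faithfulness and the identification of the essential image as the pure slope-$0$ objects coming from Kedlaya's slope filtration theorem --- and this is the correct route.

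There is, however, one genuine error in your construction for part (2): you propose to define $\mathcal{D}_{\dR}(M)$ as ``the underlying space of $\mathcal{D}_{\mathrm{st}}(M)$ equipped with the filtration induced by the de Rham localization.'' That definition cannot satisfy the asserted compatibility $D_{\dR}(V)=\mathcal{D}_{\dR}(\Drig(V))$, because a de Rham representation of $G_{\bbQ_p}$ need not be semistable: for $V$ the Tate module of an elliptic curve over $\bbQ_p$ with additive, potentially good reduction one has $\dim D_{\dR}(V)=2$ while $\dim D_{\mathrm{st}}(V)<2$, so the underlying space of $\mathcal{D}_{\mathrm{st}}(M)$ is strictly too small. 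The correct definition (Berger's) is intrinsic to the de Rham localization: one forms the $\mathbf{B}^+_{\mathrm{dR}}$-modules $\mathbf{D}^+_{\mathrm{dif}}(M)$ by localizing $M$ at the cyclotomic points, and sets $\mathcal{D}_{\dR}(M):=\bigl(\mathbf{D}^+_{\mathrm{dif}}(M)[1/t]\bigr)^{\Gamma}$ with its $t$-adic filtration; only after this does one prove $\mathcal{D}_{\mathrm{st}}(M)\subseteq\mathcal{D}_{\dR}(M)$ with equality exactly in the semistable case. With that correction, the rest of your part (2) --- adjoining $t=\log(1+\pi)$ and $\ell_\pi$, defining $\mathcal{D}_{\cris}$ and $\mathcal{D}_{\mathrm{st}}$ by $\Gamma$-invariants, and deducing the comparison with Fontaine's functors from Berger's theorem recovering $\mathbf{B}_{\mathrm{cris}}\otimes V$, $\mathbf{B}_{\mathrm{st}}\otimes V$, $\mathbf{B}_{\mathrm{dR}}\otimes V$ from $\Drig(V)$ --- is the standard and correct argument.
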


\begin{definition}
    For any continuous character $\delta: \bbQ_p^\times \rightarrow \scrR_A^\times$,
    we let $\scrR_A \{ \delta \}$ be the module $\scrR_A \cdot e_{\delta}$, with $\varphi(e_\delta) = \delta(p) e_\delta$ and $\gamma (e_\delta) = \delta(\gamma) e_\delta$ for $\gamma \in \Gamma$.
    The object $\scrR_A \{\delta \}$ serves as an important example for $(\varphi, \Gamma)$-modules of rank $1$ over $\scrR_A$.
    If we want to separate the actions of $\varphi$ and $\Gamma$, we will use the notation  
    $\scrR_A \{\delta\} = \scrR_A (a) [b]$ where $a := \delta(p)  \in \scrR_A$ and $b := \delta |_\Gamma : \Gamma \rightarrow \scrR_A$.

\end{definition}

\begin{definition}
Fix a generator $\gamma$ of $\Gamma$.
For a $(\varphi, \Gamma)$-module $M$ over $\scrR_A$, we define the cohomology groups $H^i(\bbQ_p, M)$ to be the cohomology of the following complex
$$M \xrightarrow[]{(\varphi-1, \gamma -1)} M\oplus M \xrightarrow[]{(\gamma-1, -(\varphi -1))} M.$$
\end{definition}

\begin{prop}[{\cite[Theorem~0.2]{Berger_representations} and \cite[Proposition~2.3.7]{Kedlaya_cohomology_phiGamma}}]
    Let $V$ be either a $p$-adic representation or an $A$-representation. 
    Then we have 
    $$H^i(\bbQ_p, \mathbf{D}^\dagger_{\rig}(V)) = H^i(\bbQ_p, V)$$
    for all $i$, where $H^i(\bbQ_p, V)$ is the continuous cohomology.
\end{prop}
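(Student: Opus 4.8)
The plan is to reduce the statement to Fontaine's classical description of local Galois cohomology and then to propagate the comparison along the tower of coefficient rings $\mathbf{B}_{\bbQ_p} \supset \mathbf{B}^\dagger_{\bbQ_p}$ and $\scrR \supset \mathbf{B}^\dagger_{\bbQ_p}$. For a $p$-adic representation $V$ of $G_{\bbQ_p}$ one has Fontaine's \'etale $(\varphi,\Gamma)$-module $\mathbf{D}(V)$ over $\mathbf{B}_{\bbQ_p}$, and Herr's theorem identifies $H^i(\bbQ_p,V)$ (continuous cohomology) with the cohomology of the complex $\mathbf{D}(V)\to \mathbf{D}(V)^{\oplus 2}\to \mathbf{D}(V)$ of exactly the shape used to define $H^i(\bbQ_p,-)$ in the excerpt. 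By the overconvergence theorem of Cherbonnier--Colmez, $\mathbf{D}(V)$ descends to an overconvergent $(\varphi,\Gamma)$-module $\mathbf{D}^\dagger(V)$ over the overconvergent ring $\mathbf{B}^\dagger_{\bbQ_p}$, with $\mathbf{D}^\dagger_{\rig}(V)=\scrR\otimes_{\mathbf{B}^\dagger_{\bbQ_p}}\mathbf{D}^\dagger(V)$. So it suffices to show that enlarging the coefficient ring from $\mathbf{B}_{\bbQ_p}$ to $\mathbf{B}^\dagger_{\bbQ_p}$ (shrinking) and then to $\scrR$ (enlarging) leaves the cohomology of the Herr-type complex unchanged.

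First I would check that the inclusion $\mathbf{D}^\dagger(V)\hookrightarrow \mathbf{D}(V)$ induces an isomorphism on the cohomology of the associated complexes. After passing to the standard quasi-isomorphic $\psi$-version of the Herr complex, this amounts to the statement that $\psi-1$ is bijective on the quotient $\mathbf{D}(V)/\mathbf{D}^\dagger(V)$, which is where the explicit structure of $\mathbf{B}_{\bbQ_p}/\mathbf{B}^\dagger_{\bbQ_p}$ and its decomposition under $\psi$ are used; this is part of the content of the first cited result. Next I would treat the passage to the full Robba ring: writing $\scrR=\bigcup_s \scrR^{(s)}$ as an increasing union of rings of functions on annuli, the module $\mathbf{D}^\dagger_{\rig}(V)$ has a model over some $\scrR^{(s_0)}$, and a cofinality argument together with the finiteness of $H^i$ of $(\varphi,\Gamma)$-modules over $\scrR$ (Liu) shows that the Herr cohomology of $\mathbf{D}^\dagger(V)$ agrees with that of $\mathbf{D}^\dagger_{\rig}(V)$; again the key local input is the (near-)bijectivity of $\varphi-1$ on $\scrR/\mathbf{B}^\dagger_{\bbQ_p}$, which propagates to the quotient of the free modules $\mathbf{D}^\dagger_{\rig}(V)/\mathbf{D}^\dagger(V)$. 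Composing these two comparisons with Herr's theorem gives $H^i(\bbQ_p,\mathbf{D}^\dagger_{\rig}(V))=H^i(\bbQ_p,V)$.

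For the case of an $A$-representation $V$ over a reduced affinoid $A$, I would run the same three steps with $\mathbf{B}_{\bbQ_p}$, $\mathbf{B}^\dagger_{\bbQ_p}$, $\scrR$ replaced by their $A$-linear analogues $\mathbf{B}_{\bbQ_p,A}$, $\mathbf{B}^\dagger_{\bbQ_p,A}$, $\scrR_A$ and continuous cohomology replaced by its $A$-linear version: the relative overconvergence result is due to Berger--Colmez and Kedlaya--Liu, and the finiteness and base-change behavior of $(\varphi,\Gamma)$-cohomology over $\scrR_A$ are supplied by Kedlaya--Pottharst--Xiao (the second cited reference). Alternatively, when $A$ is flat over $\bbQ_p$ one can deduce the family statement from the absolute one by specializing at rigid points and invoking the base-change isomorphisms together with a Nakayama-type argument, using that the cohomology is coherent over $A$. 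I expect the genuine obstacle — the part that truly requires the cited theorems rather than a formal diagram chase — to be precisely these finiteness/bijectivity inputs: the acyclicity of the ``$\psi=1$'' part of $\scrR/\mathbf{B}^\dagger_{\bbQ_p}$ (and its relative version over $A$) underlying the invariance of cohomology under enlarging coefficients, and, in the family case, the coherence of $H^1$ over $\scrR_A$ and its compatibility with base change.
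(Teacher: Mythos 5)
The paper gives no proof of this proposition: it is stated as a recollection of known results and the entire content is delegated to the cited references (Berger/Liu for the classical case, Kedlaya--Pottharst--Xiao for families). Your outline correctly reconstructs the standard argument from those references (Herr's complex, Cherbonnier--Colmez overconvergence, the $\psi$-complex comparison across $\mathbf{B}_{\bbQ_p}\supset\mathbf{B}^\dagger_{\bbQ_p}\subset\scrR$, and the relative versions for affinoid families), so it is consistent with, and fills in, exactly what the paper invokes by citation.
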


\begin{theorem}[\cite{Nakamura_2014}]
    If $A$ is a finite extension of $\bbQ_p$, and $M$ is a de Rham $(\varphi, \Gamma)$-module over $\scrR_A$.
    Then there are Bloch--Kato exponential and dual-exponential maps
    $$ \exp : \mathcal{D}_{\dR}(M) \rightarrow H^1(\bbQ_p, M), \quad \exp^* : H^1(\bbQ_p, M) \rightarrow \mathcal{D}_{\dR}(M),$$
    which are compatible with the usual definitions when $M = \Drig (V)$ for a de Rham representation $V$.
\end{theorem}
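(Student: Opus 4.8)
The plan is to reconstruct Nakamura's construction \cite{Nakamura_2014}, which realises the Bloch--Kato exponential purely at the level of $(\varphi,\Gamma)$-modules, the essential input being Berger's theory of the localisations of a $(\varphi,\Gamma)$-module at the cyclotomic points (see also \cite{Berger_BK_exp_formula} for the explicit formulas). First I would attach to $M$ its de Rham period data intrinsically: for $n$ large one localises $M$ at the $n$-th cyclotomic point to obtain a free module $\mathbf{D}^+_{\mathrm{dif},n}(M)$ over $\bbQ_p(\mu_{p^n})\llbracket t\rrbracket\otimes_{\bbQ_p}A$ carrying a semilinear $\Gamma$-action, and passes to the limit to get $\mathbf{D}^+_{\mathrm{dif}}(M)\subset\mathbf{D}_{\mathrm{dif}}(M):=\mathbf{D}^+_{\mathrm{dif}}(M)[1/t]$ over $L_\infty\llbracket t\rrbracket\otimes A$ and $L_\infty((t))\otimes A$ respectively, where $L_\infty:=\bigcup_n\bbQ_p(\mu_{p^n})$ and $t$ is the usual element with $\gamma(t)=\chi_{\mathrm{cyc}}(\gamma)t$. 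One sets $\mathcal{D}_{\dR}(M):=\mathbf{D}_{\mathrm{dif}}(M)^{\Gamma}$ with Hodge filtration $\Fil^i\mathcal{D}_{\dR}(M):=(t^i\mathbf{D}^+_{\mathrm{dif}}(M))^{\Gamma}$; the de Rham hypothesis on $M$ is exactly that $\mathcal{D}_{\dR}(M)$ is free over $A$ of rank $\operatorname{rank}_{\scrR_A}M$, equivalently that $(L_\infty((t))\otimes A)\otimes_A\mathcal{D}_{\dR}(M)\to\mathbf{D}_{\mathrm{dif}}(M)$ is an isomorphism. By Berger's comparison theorems these modules coincide with Fontaine's classical ones when $M=\Drig(V)$.

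Second, I would assemble the $(\varphi,\Gamma)$-module analogue of the Bloch--Kato fundamental exact sequence. One combines the short exact sequences $0\to M\to M[1/t]\to M[1/t]/M\to 0$ and $0\to\mathbf{D}^+_{\mathrm{dif}}(M)\to\mathbf{D}_{\mathrm{dif}}(M)\to\mathbf{D}_{\mathrm{dif}}(M)/\mathbf{D}^+_{\mathrm{dif}}(M)\to 0$, using Berger's intersection identity $M=M[1/t]\cap\mathbf{D}^+_{\mathrm{dif}}(M)$ and his comparison between the Fontaine--Herr complex of $M$ and a complex built out of $M[1/t]$ and the $\Gamma$-cohomology of $\mathbf{D}_{\mathrm{dif}}(M)$. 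The resulting long exact sequence, together with the vanishing of the higher $\Gamma$-cohomology of $L_\infty((t))\otimes A$-modules (a Tate-style computation) and the identification $(\mathbf{D}_{\mathrm{dif}}(M)/\mathbf{D}^+_{\mathrm{dif}}(M))^{\Gamma}=\mathcal{D}_{\dR}(M)/\Fil^0\mathcal{D}_{\dR}(M)$, produces a connecting homomorphism $\mathcal{D}_{\dR}(M)/\Fil^0\mathcal{D}_{\dR}(M)\to H^1(\bbQ_p,M)$; composing with the projection $\mathcal{D}_{\dR}(M)\twoheadrightarrow\mathcal{D}_{\dR}(M)/\Fil^0\mathcal{D}_{\dR}(M)$ gives the map $\exp$.

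Third, for $\exp^*$ I would invoke local Tate duality for $(\varphi,\Gamma)$-modules over $\scrR_A$: the cup product gives a perfect pairing $H^1(\bbQ_p,M)\times H^1(\bbQ_p,\widetilde M)\to A$, where $\widetilde M$ is the Tate dual of $M$, which is again de Rham, and $\mathcal{D}_{\dR}(\widetilde M)$ is naturally dual to $\mathcal{D}_{\dR}(M)$ with filtrations interchanged; one then defines $\exp^*_M$ as the $A$-linear adjoint of $\exp_{\widetilde M}$ under these pairings (equivalently, as the connecting map of the $t$-adically dualised exact sequence, which computes $\exp^*$ directly). Finally, compatibility with the classical Bloch--Kato maps when $M=\Drig(V)$ follows because the fundamental exact sequence for $\Drig(V)$ receives a morphism from the usual Bloch--Kato sequence tensored with $V$, compatibly with the identifications $\mathcal{D}_{\dR}(\Drig(V))=D_{\dR}(V)$ and $H^\bullet(\bbQ_p,\Drig(V))=H^\bullet(\bbQ_p,V)$ recorded above; hence the connecting homomorphisms, and likewise their duals, match.

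The step I expect to be the main obstacle is the homological input underlying $\exp$ itself: proving Berger's intersection identity and the exactness of the fundamental sequence in the de Rham case, and carrying out the comparison of the Fontaine--Herr complex with the ``$\mathbf{D}_{\mathrm{dif}}$''-complex together with the $\Gamma$-cohomology computations. These are precisely the points where one uses the full structure theory of de Rham $(\varphi,\Gamma)$-modules --- Berger's $p$-adic monodromy theorem and the explicit local description of $M$ near the cyclotomic points --- and where the (mild) passage from $\bbQ_p$-coefficients to a finite extension $A$ must be checked. The perfectness of local Tate duality over $\scrR_A$, needed for $\exp^*$, is a second non-trivial ingredient, but it is available in the literature.
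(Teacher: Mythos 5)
This statement is imported into the paper as a black-box citation of \cite{Nakamura_2014}; the paper itself contains no proof, so the only meaningful comparison is with the cited source. Your outline --- defining $\mathcal{D}_{\dR}(M)$ via Berger's localisations $\mathbf{D}_{\mathrm{dif}}^{(+)}(M)$ at the cyclotomic points, obtaining $\exp$ as the connecting map of the $(\varphi,\Gamma)$-module analogue of the Bloch--Kato fundamental exact sequence built from $M \to M[1/t]$ and $\mathbf{D}^+_{\mathrm{dif}}(M) \to \mathbf{D}_{\mathrm{dif}}(M)$, defining $\exp^*$ by local Tate duality for $(\varphi,\Gamma)$-modules over $\scrR_A$, and checking compatibility with the classical maps through $\mathcal{D}_{\dR}(\Drig(V)) = D_{\dR}(V)$ and $H^\bullet(\bbQ_p, \Drig(V)) = H^\bullet(\bbQ_p, V)$ --- is a faithful reconstruction of exactly how Nakamura (following Berger) proceeds, so your proposal matches the intended argument.
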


Given a $(\varphi, \Gamma)$-module $M$, similarly to the case of Galois representations, one may interpret $H^1(\bbQ_p, M)$ as isomorphism classes of extensions of $\scrR$ by $M$.
By using the functor $\mathcal{D}_{\cris}$, one can define 
$H^1_f(\bbQ_p, M)$ to be the subset of crystalline extensions (c.f. \cite[\S~1.4.1]{Benois_Generalization_L-invariant}).
Its relation with the finite Bloch--Kato Selmer groups of $p$-adic representations is illustrated in the following proposition.

\begin{prop}[{\cite[Proposition~1.4.2]{Benois_Generalization_L-invariant}}]
    Suppose $V$ is a $p$-adic representation.
    Then 
    $$H^1_f(\bbQ_p, V) \cong H^1_f(\bbQ_p, \mathbf{D}^\dagger_{\rig}(V)).$$
\end{prop}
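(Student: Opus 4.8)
The plan is to identify both sides with the image of the same ``crystalline'' complex under the exponential map. Recall that for a $p$-adic representation $V$ the subspace $H^1_f(\bbQ_p,V)\subset H^1(\bbQ_p,V)$ may be described as the image, under the Bloch--Kato exponential, of the cohomology of the two-term complex
$$
D_{\cris}(V)\ \xrightarrow{\ x\,\mapsto\,((\varphi-1)x,\ \overline{x})\ }\ D_{\cris}(V)\oplus\bigl(D_{\dR}(V)/\Fil^0 D_{\dR}(V)\bigr),
$$
and that, following Benois, $H^1_f(\bbQ_p,M)$ for a $(\varphi,\Gamma)$-module $M$ over $\scrR$ is defined in exactly the same way, with $D_\bullet$ replaced by the functors $\mathcal{D}_\bullet$ and with the exponential replaced by its $(\varphi,\Gamma)$-module avatar from \cite{Nakamura_2014}. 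So the assertion will follow once I check that, for $M=\Drig(V)$, these two complexes together with their maps into the respective $H^1$'s are canonically identified.

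First I would invoke the compatibilities already recorded in the excerpt: $\mathcal{D}_{\cris}(\Drig(V))=D_{\cris}(V)$ and $\mathcal{D}_{\dR}(\Drig(V))=D_{\dR}(V)$ as filtered $\varphi$-modules. This gives a canonical isomorphism between the source complex of $H^1_f(\bbQ_p,V)$ and that of $H^1_f(\bbQ_p,\Drig(V))$. Next I would use the comparison isomorphism $H^1(\bbQ_p,V)\cong H^1(\bbQ_p,\Drig(V))$ (Berger--Kedlaya, also cited above), and verify that it intertwines the two maps out of the crystalline complexes; concretely this amounts to the compatibility of the Bloch--Kato fundamental exact sequence with the functor $\Drig$, i.e.\ that $B_{\cris}^{\varphi=1}\to B_{\dR}/B_{\dR}^{+}$ corresponds, after $\otimes\,\Drig(V)$, to the analogous sequence built from $\scrR[1/t]^{\varphi=1}$ — this is precisely the content of \cite{Nakamura_2014} and \cite{Berger_BK_exp_formula}. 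Granting it, the image of the first complex in $H^1(\bbQ_p,V)$ is carried isomorphically onto the image of the second in $H^1(\bbQ_p,\Drig(V))$, which is the claim.

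A second, perhaps cleaner, way to phrase the same argument is at the level of extension classes. An element of $H^1(\bbQ_p,V)$ is the class of an extension $0\to V\to E\to\bbQ_p\to 0$, and $\Drig$ sends it to the class of $0\to\Drig(V)\to\Drig(E)\to\scrR\to 0$; conversely, any extension of $\scrR$ by $\Drig(V)$ in the category of $(\varphi,\Gamma)$-modules over $\scrR$ is again étale, since $\Drig(V)$ and $\scrR$ are pure of slope $0$ and the category of étale $(\varphi,\Gamma)$-modules is closed under extensions by Kedlaya's slope theory, hence is $\Drig(E)$ for a unique $p$-adic representation $E$ sitting in an extension of $\bbQ_p$ by $V$. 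One then checks that, under this bijection, ``$E$ crystalline'' (i.e.\ $\dim_{\bbQ_p}D_{\cris}(E)=\dim_{\bbQ_p}E$) is equivalent to ``$\Drig(E)$ crystalline'' (i.e.\ $\operatorname{rk}_{\scrR}\mathcal{D}_{\cris}(\Drig(E))=\operatorname{rk}_{\scrR}\Drig(E)$), using again $\mathcal{D}_{\cris}(\Drig(E))=D_{\cris}(E)$; since $H^1_f$ on each side is cut out by exactly this condition, the isomorphism $H^1(\bbQ_p,V)\cong H^1(\bbQ_p,\Drig(V))$ restricts to the desired isomorphism $H^1_f(\bbQ_p,V)\cong H^1_f(\bbQ_p,\Drig(V))$.

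The main obstacle is not any computation but the structural compatibility underlying both presentations: namely, that Benois's complex-theoretic definition of $H^1_f$ for $(\varphi,\Gamma)$-modules genuinely computes the subspace of crystalline extension classes (this relies on $H^1(\bbQ_p,\scrR[1/t]/\scrR)$ having the expected dimension and on $H^1_f$ being its own orthogonal complement under Tate duality), and that the comparison isomorphism on $H^1$ is compatible with the exponential maps on the two sides. Once these inputs — all available from \cite{Benois_Generalization_L-invariant}, \cite{Nakamura_2014} and \cite{Berger_BK_exp_formula} — are granted, the proof is a formal diagram chase built on the identities $\mathcal{D}_\bullet\circ\Drig=D_\bullet$.
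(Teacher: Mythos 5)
The paper does not prove this proposition; it is quoted verbatim from Benois (\cite[Proposition~1.4.2]{Benois_Generalization_L-invariant}) in a section that explicitly recalls results without proof. Your second argument --- identifying $H^1(\bbQ_p,V)$ with $H^1(\bbQ_p,\Drig(V))$ via extension classes (using that extensions of slope-$0$ modules are slope $0$) and then transporting the crystallinity condition through $\mathcal{D}_{\cris}\circ\Drig=D_{\cris}$ --- is exactly the argument of the cited source, and it is correct. One small slip: the Bloch--Kato condition cutting out $H^1_f$ is not ``$E$ is crystalline'' (i.e.\ $\dim_{\bbQ_p}D_{\cris}(E)=\dim_{\bbQ_p}E$, which fails for every extension when $V$ itself is not crystalline) but rather $\dim_{\bbQ_p}D_{\cris}(E)=\dim_{\bbQ_p}D_{\cris}(V)+1$, i.e.\ the extension splits after $\otimes B_{\cris}$; Benois's definition on the $(\varphi,\Gamma)$-module side is the exact analogue, so your comparison via $\mathcal{D}_{\cris}\circ\Drig=D_{\cris}$ goes through unchanged once the condition is stated correctly.
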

\begin{remark}
    In fact, one can also define $H^1_g$ and $H^1_e$ (c.f. \cite[Definition~2.4]{Nakamura_classification}).
    They also agree with the usual Selmer groups $H^1_g$ and $H^1_e$ for $p$-adic representations.
\end{remark}


Let $\bff$ be a primitive Coleman family of finite slope $a$, defined over a wide open disk $\calV_f \subset \calW$, with nebentypus character $\chi_f$ as in \S \ref{subsection: Coleman families}.

\begin{theorem}[{\cite[Theorem~6.3.2]{LZ-Rankin_Eisenstein}, \cite{triangulation_families} }] \label{theorem: triangulation of modular forms}
    Let $V(\bff)$ be the big Galois representation of $G_{\bbQ_p}$ over $\scrO_f$ associated with $\bff$.
    Set $M(\bff) := \mathbf{D}^\dagger_{\rig} (V(\bff))$ and $\scrR_f := \scrR_{\scrO_f}$.
    Then one has a triangulation of $M(\bff)$
    $$ 0 \rightarrow F^+ M(\bff) \rightarrow M(\bff) \rightarrow F^- M(\bff) \rightarrow 0,$$
    which satisfies the following properties:
    \begin{enumerate}
        \item $F^- M(\bff) \cong \scrR_f(a_p(\bff))$, \textit{i.e.}, $\varphi$ acts as multiplication by $a_p(\bff)$;
        \item  $F^+ M(\bff) \cong \scrR_f (\chi_{f}(p) - a_p(\bff)) [1+ \kappa_{f}]$.
    \end{enumerate}
    A similar result hold for the dual module $M^*(\bff) := \mathbf{D}^\dagger_{\rig} (V^*(\bff))$.
    Notice that we follow the additive notation of characters as in \cite{reciprocity_balanced}.

\end{theorem}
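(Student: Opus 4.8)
The plan is to obtain the triangulation of $M(\bff)$ by interpolating, over the weight disk $\calV_f$, the trianguline structures carried by the classical specializations of $\bff$, via the theory of global triangulations of families of $(\varphi,\Gamma)$-modules.

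First I would recall the classical picture at crystalline points. For all but finitely many classical weights $k=r+2\in\calV_f\cap\bbZ$ with $k>a+1$, the specialization $\bff_k$ is the $p$-stabilization, with $U_p$-eigenvalue $a_p(\bff_k)$ of $p$-adic valuation $a$, of a classical eigenform of weight $k$ and nebentypus $\chi_f$; the (geometrically normalized) small representation $V(\bff_k)$ of $G_{\bbQ_p}$ is then crystalline, with Hodge--Tate weights $\{0,k-1\}$ and with crystalline Frobenius eigenvalues the two roots of the Hecke polynomial, one of which is $a_p(\bff_k)$. Since $a<k-1$ for every classical $k\in\calV_f$ (after possibly shrinking $\calV_f$), this eigenvalue gives a non-critical refinement in the sense of Bella\"{\i}che--Chenevier, and the classical trianguline theory (Kisin, Colmez, Bella\"{\i}che--Chenevier) provides an exact sequence
$$0\rightarrow \scrR_L\{\delta_{1,k}\}\rightarrow \mathbf{D}^\dagger_{\rig}(V(\bff_k))\rightarrow \scrR_L\{\delta_{2,k}\}\rightarrow 0,$$
in which $\delta_{2,k}$ is the unramified character sending $p$ to $a_p(\bff_k)$ and $\delta_{1,k}$ is pinned down by the requirement that $\delta_{1,k}\delta_{2,k}$ be the determinant character of $V(\bff_k)$.

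Next I would package $\bff$ as a refined family over $\calV_f$: the bounded unit $a_p(\bff)\in\scrO_f^\times$ and the analytic universal character $\kappa_f$ interpolate the $\delta_{i,k}$ into rigid-analytic characters $\delta_i\colon\bbQ_p^\times\rightarrow\scrO_f^\times$, while the crystalline non-critical classical weights are Zariski-dense in $\calV_f$ and accumulate at every classical weight. Feeding this into the global triangulation theorem for such families (Liu's theorem and its refinements; see \cite{triangulation_families} and \cite[\S~6]{LZ-Rankin_Eisenstein}) produces, after possibly shrinking $\calV_f$ about a fixed non-critical classical weight, a short exact sequence of $(\varphi,\Gamma)$-modules over $\scrR_f$
$$0\rightarrow F^+ M(\bff)\rightarrow M(\bff)\rightarrow F^- M(\bff)\rightarrow 0$$
with $F^{\pm}M(\bff)$ free of rank one, $F^- M(\bff)\cong\scrR_f\{\delta_2\}$, $F^+ M(\bff)\cong\scrR_f\{\delta_1\}$, specializing weight-by-weight to the triangulations above; that these are genuine $\scrR_f$-direct summands rather than sub/quotients only up to isogeny would follow from the freeness of $V(\bff)$ and $V^*(\bff)$ recorded in Remark \ref{remark: overconvergent cohomology}(7) together with a torsion-freeness argument. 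It then remains to read off the parameters in the notation of the statement: $\delta_2$ is unramified with $\delta_2(p)=a_p(\bff)$, so $F^- M(\bff)\cong\scrR_f(a_p(\bff))$, and a determinant computation on a single classical fibre forces $\delta_1(p)=\chi_f(p)a_p(\bff)^{-1}$ and $\delta_1|_\Gamma=\chi_{\mathrm{cyc}}\kappa_f$, i.e. $F^+ M(\bff)\cong\scrR_f(\chi_f(p)-a_p(\bff))[1+\kappa_f]$ in the additive notation; the corresponding triangulation of $M^*(\bff)=\mathbf{D}^\dagger_{\rig}(V^*(\bff))$ is obtained by dualizing.

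The step I expect to be the main obstacle is the appeal to the global triangulation theorem: one must check honestly that the refinement stays non-critical over a suitable shrinking of $\calV_f$, that the refined family is regular at the chosen classical weight so that the theorem applies, and that the candidate parameters $\delta_i$ are genuine continuous characters valued in $\scrO_f^\times$; one must also control the shrinking of $\calV_f$ so that it stays compatible with the Coleman family fixed at the outset. Everything else — matching Hodge--Tate weights and determinants, and tracking the sign and shift conventions for $\kappa_f$ and $\chi_{\mathrm{cyc}}$ as in \cite{reciprocity_balanced} — is bookkeeping.
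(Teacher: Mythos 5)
Your proposal is essentially the argument behind the cited references: the paper itself gives no proof of this theorem but simply quotes \cite[Theorem~6.3.2]{LZ-Rankin_Eisenstein} and the global triangulation results of \cite{triangulation_families}, and those sources proceed exactly as you describe --- interpolating the non-critical refined triangulations at a Zariski-dense set of crystalline classical points via the global triangulation theorem for refined families, then identifying the parameters $\delta_1,\delta_2$ by a determinant and Hodge--Tate weight computation. The caveats you flag (non-criticality after shrinking $\calV_f$, freeness of $V(\bff)$, and the additive character conventions) are precisely the points those references address, so your reconstruction is correct and follows the same route.
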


\subsection{The Balanced Selmer group}
Let $\bff, \bfg, \bfh$ be three Coleman families with tame characters satisfying the relation $\chi_f \cdot \chi_g \cdot \chi_h =1$,
and let $a_f, a_g, a_h$ be their respective slopes.
Suppose $\calV_f, \calV_g, \calV_h$ are defined over some finite extension $L$ of $\bbQ_p$.
We fix centers $k_0, \ell_0, m_0$ and uniformizers $\mathbf{k}-k_0, \bm{\ell}- \ell_0, \mathbf{m}-m_0$ for $\calV_f, \calV_g, \calV_h$ respectively.
We set $\scrO_{fgh} := \scrO_f \hat{\otimes}_L \scrO_g \hat{\otimes}_L \scrO_h$ and $\scrR_{fgh} := \scrR_{\scrO_{fgh}}$.

Consider the character $\Xi: G_{\bbQ_p} \rightarrow \scrO_{fgh}^\times$ defined by
\begin{align*}
    \Xi(g) :&= ( \omega^{(4 - k_0 -l_0 -m_0)/2} \cdot \langle \phantom{e} \rangle^{(4 - \mathbf{k} - \bm{\ell} - \mathbf{m})/2}) \circ \chi_{\mathrm{cyc}}(g) \\
    &= \frac{1}{2} (- \kappa_f- \kappa_g- \kappa_h- 2) \circ \chi_{\mathrm{cyc}}(g).
\end{align*}
We set 
$$V(\bff, \bfg, \bfh):=  V(\bff) \hat{\otimes}_L V(\bfg) \hat{\otimes}_L V(\bfh) \otimes_{\scrO_{fgh}} \Xi,$$
which is Kummer self-dual, and set $M(\bff, \bfg, \bfh):= \Drig(V(\bff, \bfg, \bfh))$.
One defines a filtration on $M(\bff)$ by $F^0 M(\bff) = M(\bff)$, $F^1 M(\bff) = F^+M(\bff)$, $F^2M(\bff) =0$, and similarly for $M(\bfg), M(\bfh)$. 
These filtrations induce a filtration on $M(\bff, \bfg, \bfh)$, defined by 
$$F^n M(\bff, \bfg, \bfh) := \left [ \sum_{p +q+r =n} F^p M(\bff) \hat{\otimes}_{L} F^q M(\bfg) \hat{\otimes}_{L} F^r M(\bfh) \right ] \{\Xi \}$$
where by $\{ \Xi \}$ we mean the image of $\Xi$ under the functor $\Drig$ (c.f. \cite[\S~1.5]{Benois_Generalization_L-invariant}).

\begin{definition}
    Analogous to \cite[\S~7.2]{reciprocity_balanced}, we define the balanced Selmer group to be 
    \begin{equation}
        H^1_{\mathrm{bal}}(\bbQ_p, M(\bff, \bfg, \bfh) ):= H^1(\bbQ_p, F^2 M(\bff, \bfg, \bfh)) \subset H^1(\bbQ_p, M(\bff, \bfg, \bfh)).
    \end{equation}
\end{definition}
\begin{remark}
    We remark that by examining the $(\varphi, \Gamma)$-module structure of $\textrm{gr}^0 M(\bff, \bfg, \bfh)$, the natural map $F^2 M(\bff, \bfg, \bfh) \hookrightarrow M(\bff, \bfg, \bfh)$ induces an injection on $H^1$ (c.f. \cite[(148)]{reciprocity_balanced}).
\end{remark}

One important property of this balanced Selmer group is the following lemma.
\begin{lemma}[{c.f. \cite[Lamma~7.2]{reciprocity_balanced}}]
    If $(x, y, z) \in \Sigma_{\mathrm{bal}}$ is a balanced triple of classical weights, then
    \begin{equation}
        H^1_{\mathrm{bal}}(\bbQ_p, M(\bff_x, \bfg_y, \bfh_z) ) = H^1_f(\bbQ_p, M(\bff_x, \bfg_y, \bfh_z)) = H^1_f(\bbQ_p, V(\bff_x, \bfg_y, \bfh_z)) 
    \end{equation}
    where the last one is the Bloch--Kato finite Selmer group.
    As a consequence, one has the Bloch--Kato exponential map
    $$\exp_p : D_{\dR}(V(\bff_x, \bfg_y, \bfh_z)) / \Fil^0 \cong  H^1_{\mathrm{bal}}(\bbQ_p, M(\bff_x, \bfg_y, \bfh_z) )$$
    and we denote its inverse by $\log_p$.
\end{lemma}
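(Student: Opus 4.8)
The plan is to transcribe the argument of \cite[Lemma~7.2]{reciprocity_balanced} into the $(\varphi,\Gamma)$-module setting. The only structural novelty is that the classical specialization $V(\bff_x,\bfg_y,\bfh_z)$ need not be ordinary at $p$, merely crystalline: since $\bff_x$ is a $p$-stabilization of the newform $f_x^0$, its Galois representation coincides with that of $f_x^0$ and is crystalline at $p$, the triangulation of $M(\bff_x)$ in Theorem~\ref{theorem: triangulation of modular forms} recording only the choice of refinement $\alpha_f$ versus $\beta_f$. The middle equality $H^1_f(\bbQ_p, M(\bff_x,\bfg_y,\bfh_z)) = H^1_f(\bbQ_p, V(\bff_x,\bfg_y,\bfh_z))$ is immediate from Benois' comparison (Proposition~1.4.2 of \cite{Benois_Generalization_L-invariant}, recalled above), so the content lies in the first equality.

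First I would unwind the filtration $F^\bullet$ on $M := M(\bff_x,\bfg_y,\bfh_z)$ induced by the triangulations of $M(\bff_x)$, $M(\bfg_y)$, $M(\bfh_z)$ specialized to the classical weights $(x,y,z)$. One finds $\mathrm{rank}_{\scrR}F^2 M = 4$, the graded pieces being $\mathrm{gr}^2$ of rank $3$ and $\mathrm{gr}^3$ of rank $1$, each a successive extension of rank-one $(\varphi,\Gamma)$-modules $\scrR_L\{\delta\}$ for explicit characters $\delta$ built from the unramified characters sending $p$ to products of the $\alpha$'s and $\beta$'s of $f_x^0, g_y^0, h_z^0$ and from powers of $\chi_{\mathrm{cyc}}$ coming from the $[1+\kappa_\bullet]$-twists in Theorem~\ref{theorem: triangulation of modular forms} and from $\Xi$. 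The hypothesis $(x,y,z)\in\Sigma_{\bal}$ translates precisely into the statement that every Hodge--Tate--Sen weight of $F^2 M$ is $\geq 1$ while every Hodge--Tate--Sen weight of $M/F^2 M$ is $\leq 0$; equivalently, $\Fil^0\mathcal{D}_{\dR}(M)$ has codimension $4$ in $\mathcal{D}_{\dR}(M)$ and $\Fil^0\mathcal{D}_{\dR}(F^2 M)=0$.

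Next I would establish $H^0(\bbQ_p, F^2 M) = H^2(\bbQ_p, F^2 M) = 0$: the first vanishing is forced by the positive Hodge--Tate weights, and the second, via Tate--Herr local duality, reduces to $H^0(\bbQ_p, (F^2 M)^*(1)) = 0$, which one checks by verifying that none of the (dual) graded characters is the trivial character, using the Ramanujan--Weil bounds on the $|\alpha_\bullet|$, $|\beta_\bullet|$ together with the balancedness inequalities and the slope conditions $x>2a+1$, $y>2a_g+1$, $z>2a_h+1$, exactly as in \cite{reciprocity_balanced}. Granting this, Liu's Euler characteristic formula gives $\dim_L H^1(\bbQ_p, F^2 M) = \mathrm{rank}_{\scrR}F^2 M = 4$, and in fact $H^1(\bbQ_p, F^2 M) = H^1_e = H^1_f = H^1_g$ for $F^2 M$. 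The map $H^1(\bbQ_p, F^2 M)\hookrightarrow H^1(\bbQ_p, M)$ is injective by the remark preceding the lemma, so $H^1_{\bal}(\bbQ_p, M)$ is $4$-dimensional; by functoriality of the finite local condition applied to $F^2 M\hookrightarrow M$ this space lies inside $H^1_f(\bbQ_p, M)$. On the other side, the Bloch--Kato formula for de Rham $(\varphi,\Gamma)$-modules gives $\dim_L H^1_f(\bbQ_p, M) = \dim_L\mathcal{D}_{\dR}(M)/\Fil^0 + \dim_L H^0(\bbQ_p, M) = 4 + 0$, the vanishing of $H^0$ following from the cuspidal, non-Eisenstein nature of $f_x^0, g_y^0, h_z^0$ (and the Kummer self-duality twist), which forces $\mathcal{D}_{\cris}(M)^{\varphi=1} = H^0(\bbQ_p, M) = 0$. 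Comparing dimensions yields $H^1_{\bal}(\bbQ_p, M) = H^1_f(\bbQ_p, M)$. For the displayed consequence, the general theory of the Bloch--Kato exponential for de Rham $(\varphi,\Gamma)$-modules (recalled above via \cite{Nakamura_2014}) gives that $\exp$ is injective, since $H^0(\bbQ_p, M) = 0 = \mathcal{D}_{\cris}(M)^{\varphi=1}$, with image $H^1_e = H^1_f$; as source and target are both $4$-dimensional, $\exp_p$ is the asserted isomorphism onto $H^1_{\bal}(\bbQ_p, M(\bff_x,\bfg_y,\bfh_z))$.

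I expect the main obstacle to be the verification in the third step that no rank-one graded character of $F^2 M$ (or its dual twist) is ``exceptional'', i.e.\ equal to the trivial or the cyclotomic character: this is where purity, the explicit shape of the triangulation characters, the balancedness inequalities, and the slope hypotheses must all be combined, exactly as in the ordinary case treated in \cite{reciprocity_balanced}; the remaining steps are formal consequences of the Euler characteristic formula, the Bloch--Kato dimension formula, and functoriality of the local conditions.
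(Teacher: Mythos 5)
Your argument is correct, and it rests on exactly the same key computation as the paper's proof: balancedness forces $\Fil^0 \mathcal{D}_{\dR}(F^2 M) = 0$ and $\Fil^0 \mathcal{D}_{\dR}(M/F^2 M) = \mathcal{D}_{\dR}(M/F^2 M)$. Where you diverge is in how you conclude. The paper exploits the Kummer duality between $F^2 M$ and $M/F^2 M$: the second $\Fil^0$ identity gives $H^1_e(\bbQ_p, M/F^2 M) = 0$, hence by local duality $H^1(\bbQ_p, F^2 M) = H^1_g(\bbQ_p, F^2 M)$, which equals $H^1_f$ by \cite[Lemma~3.5]{reciprocity_balanced}; this yields $H^1_{\bal} \subseteq H^1_f$, and the reverse inclusion is obtained symmetrically from the first $\Fil^0$ identity. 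You instead run a dimension count: the vanishing of $H^0$ and $H^2$ for $F^2 M$ together with the Euler characteristic formula give $\dim H^1(\bbQ_p, F^2 M) = 4$, the Bloch--Kato dimension formula gives $\dim H^1_f(\bbQ_p, M) = 4$, and one inclusion plus injectivity of $H^1(\bbQ_p, F^2 M) \rightarrow H^1(\bbQ_p, M)$ forces equality. Your route buys an explicit dimension ($4$) for the balanced Selmer group, but requires the additional verification that no rank-one graded character of $F^2 M$ (or its Tate dual) is trivial or cyclotomic --- needed for the $H^0$ and $H^2$ vanishing and for $H^1_e = H^1_f = H^1_g$ on $F^2 M$ --- which you correctly identify as the delicate point; the paper's duality argument sidesteps the $H^2$ computation and the Euler characteristic formula entirely. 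Both are faithful transcriptions of \cite[Lemma~7.2]{reciprocity_balanced} to the finite slope setting, and your observation that the only new input is the triangulation of Theorem \ref{theorem: triangulation of modular forms} replacing the ordinary filtration is exactly the point of the paper's remark preceding the lemma.
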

\begin{proof}
    The proof is essentially identical to that of \cite[Lamma~7.2]{reciprocity_balanced}, which boils down to compute the Hodge--Tate weights, or the actions of $\Gamma$ in the case of $(\varphi, \Gamma)$-modules.
    In particular, one has 
    $$ \Fil^0 \mathcal{D}_{\dR} (F^2 M(\bff_x, \bfg_y, \bfh_z)) =0 \textrm{ and } \Fil^0 \mathcal{D}_{\dR} (M(\bff_x, \bfg_y, \bfh_z) /F^2 ) = \mathcal{D}_{\dR}(M(\bff_x, \bfg_y, \bfh_z)).$$
    The second identity implies that $H^1_e(\bbQ_p, M(\bff_x, \bfg_y, \bfh_z) /F^2 ) = 0$.
    Notice that $F^2 M(\bff_x, \bfg_y, \bfh_z)$ is Kummer dual to $M(\bff_x, \bfg_y, \bfh_z) /F^2$.
    So one has 
    $$H^1(\bbQ_p, F^2 M(\bff_x, \bfg_y, \bfh_z)) = H^1_g(\bbQ_p, F^2 M(\bff_x, \bfg_y, \bfh_z)).$$
    By \cite[Lemma~3.5]{reciprocity_balanced}, $H^1_g(\bbQ_p, F^2 M(\bff_x, \bfg_y, \bfh_z)) = H^1_f(\bbQ_p, F^2 M(\bff_x, \bfg_y, \bfh_z))$.
    One then deduces that $H^1_{\bal}(\bbQ_p, F^2 M(\bff_x, \bfg_y, \bfh_z)) \subset H^1_f(\bbQ_p, F^2 M(\bff_x, \bfg_y, \bfh_z))$.
    For the other direction, it follows similarly by using the fact that $\Fil^0 \mathcal{D}_{\dR} (F^2 M(\bff_x, \bfg_y, \bfh_z)) =0$.
\end{proof}

If we set 
$$M(\bff, \bfg, \bfh)_f := F^-M( \bff) \hat{\otimes} F^+ M(\bfg) \hat{\otimes} F^+M(\bfh)\{ \Xi \}$$ 
and similarly for $M(\bff, \bfg, \bfh)_g$ and $M(\bff, \bfg, \bfh)_h$, then
the graded piece $F^2 / F^3$ decomposes as 
\begin{equation}
    F^2/ F^3 =  M(\bff, \bfg, \bfh)_f \oplus M(\bff, \bfg, \bfh)_g \oplus M(\bff, \bfg, \bfh)_h.
\end{equation}
As a result, there are natural projections
$F^2 M(\bff, \bfg, \bfh) \rightarrow  M(\bff, \bfg, \bfh)_\xi$ 
for $\xi \in \{f, g, h\}$.

\subsection{The three-variable big logarithm}
Write $V_{\dR}(\bff_x, \bfg_y, \bfh_z) := D_{\dR} (V(\bff_x, \bfg_y, \bfh_z))$ and similarly for its Kummer dual $V_{\dR}^*(\bff_x, \bfg_y, \bfh_z)$.

If $w =(x, y, z) \in \Sigma_{\mathrm{bal}}$, one can construct an element $\eta^a_{\bff_x} \otimes \omega_{\bfg_y} \otimes \omega_{\bfh_z} \in \Fil^0 V_{\dR}^*(\bff_x, \bfg_y, \bfh_z)$ similarly to the construction in \S \ref{subsection: AJ imgae} (c.f. \cite[\S~3.1]{reciprocity_balanced}).
The Bloch--Kato logarithm 
$$\log_p: H^1_{\mathrm{bal}}(\bbQ_p, M(\bff_x, \bfg_y, \bfh_z)) \xrightarrow{\sim} D_{\dR}(V(\bff_x, \bfg_y, \bfh_z)) \cong [\Fil^0 V_{\dR}^*(\bff_x, \bfg_y, \bfh_z)]^\vee $$
then enables us to define $\log_{p, f}: H^1_{\mathrm{bal}}(\bbQ_p, M(\bff_x, \bfg_y, \bfh_z)) \rightarrow L$ as the composition of $\log_p$ and evaluation at $\eta^a_{\bff_x} \otimes \omega_{\bfg_y} \otimes \omega_{\bfh_z}$.

On the other hand, if $w \in \Sigma_f$, one has the Bloch--Kato dual exponential 
$$ \exp_p^* : H^1(\bbQ_p, M(\bff_x, \bfg_y, \bfh_z)) \rightarrow \Fil^0 V_{\dR}(\bff_x, \bfg_y, \bfh_z).$$
We then let $\exp_{p, f}^*$ be the composition of $\exp_p^*$ with evaluation at $\eta^a_{\bff_x} \otimes \omega_{\bfg_y} \otimes \omega_{\bfh_z}$.
Notice that here one needs to identify $\Fil^0 V_{\dR}(\bff_x, \bfg_y, \bfh_z)$ as a subspace of $[V^*_{\dR} (\bff_x, \bfg_y, \bfh_z)]^\vee$ via Poincar\'{e} duality.

Before stating the proposition about the three variable big logarithm, we would like to recall some notations.
Let $w = (x, y, z) \in \Sigma_{\cl}$, we write $\alpha_{f_x} = a_p(\bff)_x$ and $\beta_{f_x} = \chi_f(p) p^{x-1} / \alpha_{f_x}$ and similarly for $g$ and $h$.
We also set $c_w := (x +y +z -2)/2$ to be the point of symmetry.

\begin{prop}[{\cite[Proposition~7.3]{reciprocity_balanced}}] \label{prop: three variable log}
    There is a unique $\scrO_{fgh}$-module morphism
    $$ \scrL_f = \scrL og_f (\bff, \bfg, \bfh) : H^1_{\bal}(\bbQ_p, M(\bff, \bfg, \bfh )) \rightarrow \scrO_{fgh} $$
    such that, for all $w = (x, y, z) \in \Sigma_{\cl}$ with $\alpha_{f_x} \beta_{g_y} \beta_{h_z} \neq p^{c_w}$ and $Z \in H^1_{\mathrm{bal}}(\bbQ_p, M(\bff, \bfg, \bfh ))$,
    \begin{equation} \label{equation: 3 variable big log}
        \scrL_f(Z)_w = (p-1) \alpha_{f_x} \cdot 
        \frac{\left ( 1 - \frac{ \beta_{f_x} \alpha_{g_y} \alpha_{h_z} }{p^{c_w}}\right ) }{\left ( 1 - \frac{ \alpha_{f_x} \beta_{g_y} \beta_{h_z} }{p^{c_w}}\right ) } \cdot 
        \begin{cases}
            \frac{(-1)^{c_w-x}}{(c_w -x)!} \log_{p, f}(Z_w) &\textrm{ if } w \in \Sigma_{\bal} \\
            ( x- c_w -1)! \exp_{p, f}^* (Z_w) &\textrm{ if } w \in \Sigma_{f}
        \end{cases},
    \end{equation}
    where the subscript $w$ means the specialization at $w$.
    Moreover, $\scrL_f$ factors through the natural map
    $$H^1_{\bal}(\bbQ_p, M(\bff, \bfg, \bfh)) \rightarrow H^1(\bbQ_p, M(\bff, \bfg, \bfh)_f).$$
    Analogous results also hold for $g$ and $h$.
\end{prop}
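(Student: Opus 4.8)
The plan is to construct $\scrL og_f$ out of the Perrin--Riou big logarithm attached to a single rank-one $(\varphi,\Gamma)$-module, following \cite[\S~7]{reciprocity_balanced} essentially line by line, the only genuinely new input being that the triangulation of $M(\bff)$ is now supplied by Theorem \ref{theorem: triangulation of modular forms} in place of Hida theory.

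First I would compose the defining inclusion $F^2 M(\bff,\bfg,\bfh) \hookrightarrow M(\bff,\bfg,\bfh)$ with the projection $F^2 \twoheadrightarrow F^2/F^3 \twoheadrightarrow M(\bff,\bfg,\bfh)_f$ onto the $f$-summand and pass to $H^1$, which produces the natural $\scrO_{fgh}$-linear map
$$ H^1_{\bal}(\bbQ_p, M(\bff,\bfg,\bfh)) \longrightarrow H^1(\bbQ_p, M(\bff,\bfg,\bfh)_f) $$
through which $\scrL og_f$ will factor by construction. By Theorem \ref{theorem: triangulation of modular forms} the modules $F^-M(\bff)$, $F^+M(\bfg)$, $F^+M(\bfh)$ are each free of rank one, hence $M(\bff,\bfg,\bfh)_f$ is a rank-one $(\varphi,\Gamma)$-module over $\scrR_{fgh}$; I would write it explicitly as $\scrR_{fgh}\{\delta_f\}$, with the $\varphi$-part of $\delta_f$ built from the $U_p$-eigenvalues $a_p(\bff), a_p(\bfg), a_p(\bfh)$ and the tame characters and the $\Gamma$-part from $\kappa_g,\kappa_h$ and $\Xi$, exactly as dictated by Theorem \ref{theorem: triangulation of modular forms}, and then check that $\scrR_{fgh}\{\delta_f\}$ is de Rham and that the two cases of formula \eqref{equation: 3 variable big log} correspond precisely to the two sign regimes of its Hodge--Tate weight at $w=(x,y,z)$, namely $w\in\Sigma_{\bal}$ (where the Bloch--Kato logarithm is the relevant map) and $w\in\Sigma_f$ (where the dual exponential is).

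Next I would invoke the Perrin--Riou big logarithm for rank-one $(\varphi,\Gamma)$-modules over $\scrO_{fgh}$, as constructed in \cite[\S~6]{LZ-Rankin_Eisenstein} and \cite[\S~7]{reciprocity_balanced}: after possibly shrinking $\calV_f,\calV_g,\calV_h$ it yields a unique $\scrO_{fgh}$-linear map $H^1(\bbQ_p,\scrR_{fgh}\{\delta_f\}) \to \scrO_{fgh}$ whose specialization at each classical $w$ is the Bloch--Kato logarithm, respectively dual exponential, multiplied by the $\varphi$-Euler factor and $\Gamma$-factor prescribed by Perrin--Riou's interpolation property. I would set $\scrL og_f$ equal to the composite of this map with the reduction map above and with evaluation at $\eta^a_{\bff_x}\otimes\omega_{\bfg_y}\otimes\omega_{\bfh_z}$; here one checks that, under the trivialization $M(\bff,\bfg,\bfh)_f\cong\scrR_{fgh}\{\delta_f\}$, this de Rham class specializes to a generator of the relevant $\mathcal{D}_{\cris}$, which follows from the explicit form of the triangulation together with the compatibility of the specialization maps of Remark \ref{remark: overconvergent cohomology} with the Bloch--Kato comparison maps. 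Tracking Perrin--Riou's constants through the composite then reproduces \eqref{equation: 3 variable big log}: the quotient $(1-\beta_{f_x}\alpha_{g_y}\alpha_{h_z}/p^{c_w})/(1-\alpha_{f_x}\beta_{g_y}\beta_{h_z}/p^{c_w})$ is the ratio of the $\varphi$-Euler factor of $\delta_f$ for the Kummer-dual module to that for $\delta_f$ itself, the term $(p-1)\alpha_{f_x}$ is the usual normalization constant, and the region-dependent $(-1)^{c_w-x}/(c_w-x)!$ versus $(x-c_w-1)!$ are the two branches of Perrin--Riou's $\Gamma$-factor in the two weight ranges.

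Uniqueness of $\scrL og_f$ would follow from density: the locus $\alpha_{f_x}\beta_{g_y}\beta_{h_z}=p^{c_w}$ cuts out a proper analytic subset of $\calV_f\times\calV_g\times\calV_h$, so the classical weights at which the interpolation formula is imposed remain Zariski-dense, and $\scrO_{fgh}$ being reduced, any two $\scrO_{fgh}$-linear maps to $\scrO_{fgh}$ agreeing on this dense set coincide; the statements for $g$ and $h$ then follow by permuting the families. The step I expect to be the main obstacle is exactly the verification of the interpolation property at classical $w$ --- identifying $\eta^a_{\bff_x}\otimes\omega_{\bfg_y}\otimes\omega_{\bfh_z}$ with a $\mathcal{D}_{\cris}$-basis of the specialized rank-one module and bookkeeping Perrin--Riou's $\Gamma$- and Euler factors correctly across the balanced and $f$-dominant ranges --- but, granting the big logarithm for finite-slope families set up as above, this proceeds identically to \cite[\S~8 and \S~9]{reciprocity_balanced}.
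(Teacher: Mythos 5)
Your proposal is correct and follows essentially the same route as the paper: factor through the rank-one summand $M(\bff,\bfg,\bfh)_f$ of $F^2/F^3$, apply the Perrin--Riou/Loeffler--Zerbes big logarithm for rank-one $(\varphi,\Gamma)$-modules supplied by the triangulation of Theorem \ref{theorem: triangulation of modular forms}, and recover the interpolation constants by computing the $\varphi$-eigenvalue (after the Hodge--Tate twist $p^{-j}$ with $j=\frac{x-y-z}{2}$) of the specialized module. The only point the paper makes more precise than you do is the origin of the factor $(p-1)\alpha_{f_x}$, which comes from the Eichler--Shimura normalization $(\eta^a_{\bff})_x=(p-1)\alpha_{f_x}\,\eta^a_{\bff_x}$ of the interpolated de Rham class rather than from the logarithm itself.
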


\begin{proof}
    The construction of the big logarithm is a consequence based on the work of Coleman--Perrin-Riou \cite{Coleman_division, Perrin-Riou_Iwasawa}, Ochiai \cite{Ochiai}, and Loeffler--Zerbes \cite[Theorem~4.15]{LZ-Iwasawa}.
    Here we only explain the factor
    $$(p-1) \alpha_{f_x} \cdot {\left ( 1 - \frac{ \beta_{f_x} \alpha_{g_y} \alpha_{h_z} }{p^{c_w}}\right ) }{\left ( 1 - \frac{ \alpha_{f_x} \beta_{g_y} \beta_{h_z} }{p^{c_w}}\right ) }^{-1} .$$
    It suffices to understand the $(\varphi, \Gamma)$-module structure of $M(\bff, \bfg, \bfh)_f$.
    To be more precise, one need to compute the $\varphi$-eigenvalue of $M(\bff, \bfg, \bfh)_f$ when specialized to $w =(x, y, z) \in \Sigma_{\bal}$.
    According to \cite[\S~7.1]{reciprocity_balanced}, one would expect this eigenvalue to be
    $$\frac{\chi_g(p) \chi_h(p) \alpha_{f_x}}{\alpha_{g_y} \alpha_{h_z} } = \frac{p^{k-1} }{\beta_{f_x}\alpha_{g_y} \alpha_{h_z} } $$
    up to a twist of $p^j$ for some $j= j(w) \in \bbZ$.

    First, one sees from the description in Theorem \ref{theorem: triangulation of modular forms} that the term
    $$\frac{\chi_g(p) \chi_h(p) \alpha_{f_x}}{\alpha_{g_y} \alpha_{h_z} }$$
    appears.
    One now calculates the Hodge--Tate weight of $(M(\bff, \bfg, \bfh)_f )_w$, which is $j := \frac{x-y-z}{2}$.
    Then by \cite[Proposition~1.5.8]{Benois_Generalization_L-invariant}, the $\varphi$-eigenvalue on $(M(\bff, \bfg, \bfh)_f )_w$ is 
    \begin{equation}
        A:= \frac{\chi_g(p) \chi_h(p) \alpha_{f_x}}{\alpha_{g_y} \alpha_{h_z} } \cdot p^{-j} = \frac{p^{k-1} }{\beta_{f_x}\alpha_{g_y} \alpha_{h_z} } \cdot p^{-j}.
    \end{equation}
    Notice that $k-1-j = c_w$.
    Hence one gets the Euler factor
    \begin{equation*}
        (1 -A^{-1}) (1 -p^{-1}A)^{-1} = {\left ( 1 - \frac{ \beta_{f_x} \alpha_{g_y} \alpha_{h_z} }{p^{c_w}}\right ) }{\left ( 1 - \frac{ \alpha_{f_x} \beta_{g_y} \beta_{h_z} }{p^{c_w}}\right ) }^{-1}.
    \end{equation*}
    On the other hand, it follows from Eichler--Shimura relation in families (c.f. \cite[Theorem~6.4.1 \& Corollary~6.4.3]{LZ-Rankin_Eisenstein}) that one can also $p$-adically interpolate the class $\eta^a_{\bff_x}, \omega_{\bfg_y},  \omega_{\bfh_z}$.
    Here we follows the normalization in \cite[(132)]{reciprocity_balanced}.
    Namely, the big class $\eta^a_{\bff}$ is such that for $x \in \Sigma_f$, 
    $$(\eta^a_{\bff})_x = (p-1) \alpha_{f_x} \eta^a_{\bff_x}.$$
    For $\omega_{\bfg}$ and $\omega_{\bfh}$, they interpolate respectively $\omega_{\bfg_y},  \omega_{\bfh_z}$ without extra multiples.
    This explains the factor $(p-1) \alpha_{f_x}$ in (\ref{equation: 3 variable big log}).
\end{proof}

\subsection{The diagonal class \texorpdfstring{$\underline{\kappa}$}{}}

We now recall the definition of the big diagonal class $\underline{\kappa}$, which interpolates the ($p$-stabilization of) the classes given by $\Delta_{x, y, z}$'s in the \'{e}tale cohomology groups.
If one examines the construction in \cite[\S~8.1]{reciprocity_balanced} carefully, one sees that their argument is also valid for the finite slope case, with only few modifications needed.

Let $\bff, \bfg, \bfh$ be as before.
We fix a integer $m$ large enough and set $A_\xi ^\cdot = A_{\calV_\xi, m} (T^\cdot)$ and $D_\xi^\cdot = D_{\calV_\xi, m} (T^\cdot)$ for $\xi \in \{ f, g, h\}$.

Set 
$(T \times T)_0:= \{ (t_1, t_2) \in T \times T \mid \det(t_1, t_2) \in \bbZ_p^\times \}$
and let $(T \times T)^0$ be the complement of $(T \times T)_0$ in $T \times T$.
Both $(T \times T)_0$ and $(T \times T)^0$ are compact open in $T \times T$ and are preserved by $\Gamma_0(p \bbZ_p)$.
The decomposition $T \times T = (T \times T)_0 \sqcup (T \times T)^0$ induces a decomposition of $\Gamma_0(p \bbZ_p)$-modules
$$ A_g \hat{\otimes} A_h = (A_g \hat{\otimes} A_h)_0 \oplus (A_g \hat{\otimes} A_h)^0$$
where $(A_g \hat{\otimes} A_h)_0$ consists of locally analytic functions supported on $(T \times T)_0$ and similarly for $(A_g \hat{\otimes} A_h)^0$.
Let $\Lambda_{fgh} := \Lambda_f \hat{\otimes}_{\calO_L} \Lambda_g \hat{\otimes}_{\calO_L} \Lambda_h$ and define the following characters from $\bbZ_p^\times$ to $\Lambda_{fgh}^\times$
\begin{align*}
    \kappa_f^* (u) &:= \omega(u)^{(\ell_0 + m_0 - k_0 -2)/ 2 } \cdot \langle u \rangle ^{(\bm{\ell} + \bm{m} - \bm{k} -2)/ 2 },\\
    \kappa_{fgh}^* (u) &:= \omega(u)^{( k_0 + \ell_0 + m_0 -6)/ 2 } \cdot \langle u \rangle ^{( \bm{k} + \bm{\ell} + \bm{m}  -6)/ 2 }.
\end{align*}
One also define $\kappa_g^*, \kappa_h^*$ similarly so that $\kappa_{fgh}^* = \kappa_f^* + \kappa_g^* + \kappa_h^*$.

Consider now the function
$$\bm{Det} : T' \times T \times T \rightarrow \Lambda_{fgh}^\times$$
which is identically zero on $T' \times (T \times T)^0$ and on an element $(t_1, t_2 ,t_3) \in T' \times (T \times T)_0$ takes the value
$$\bm{Det}(t_1, t_2, t_3) = \det(t_1, t_2)^{\kappa_h^*} \cdot \det(t_1, t_3)^{\kappa_g^*} \cdot \det(t_2, t_3)^{\kappa_f^*}.$$
A direct computation shows that $\bm{Det}$ belongs to $A'_f \hat{\otimes} A_g \hat{\otimes} A_h ( -\kappa_{fgh}^*)$.
By Remark \ref{remark: overconvergent cohomology}, is corresponds to a class
\begin{equation}
    \bm{Det}^{\et} \in H^0_{\et}(Y, A'_f \otimes A_g \otimes A_h (-\kappa_{fgh}^*))
\end{equation}
where we use the same notation $A^\cdot_\xi$ for its corresponding \'{e}tale sheave.

Let $d: Y \rightarrow Y^3$ be the diagonal embedding.
Define $\underline{\kappa}^{\circ \circ} := \bm{\mathrm{AJ}}_{\et} (\bm{Det}^{\et} )$, where $\bm{\mathrm{AJ}}_{\et}$ denotes the following composition of maps:
\begin{align}
\begin{split}
    H^0_{\et}(Y, A'_f \otimes A_g \otimes & A_h (-\kappa_{fgh}^*)) \xrightarrow{d_*}  H^4_{\et}(Y^3, A'_f \boxtimes A_g \boxtimes A_h (-\kappa_{fgh}^*) \otimes_{\bbZ_p} \bbZ_p(2)) \\
    \xrightarrow{\mathrm{HS}} & H^1( \bbQ, H^3_{\et} (Y^3_{\bar{\bbQ}}, A'_f \boxtimes A_g \boxtimes A_h)(2 +\kappa_{fgh}^*))\\
    \xrightarrow{K} & H^1(\bbQ, H^1(\Gamma, A'_f) \hat{\otimes}_L H^1 (\Gamma, A_g) \hat{\otimes}_L H^1 (\Gamma, A_h)(2 +\kappa_{fgh}^*)) \\
    \xrightarrow{w_p \otimes \id \otimes \id} & H^1(\bbQ, H^1(\Gamma, A_f) \hat{\otimes}_L H^1 (\Gamma, A_g) \hat{\otimes}_L H^1 (\Gamma, A_h)(2 +\kappa_{fgh}^*)) \\
    \xrightarrow{s_{fgh}} & H^1(\bbQ, H^1(\Gamma, D'_f)^{\leq{a_f}} \hat{\otimes}_L H^1 (\Gamma, D'_g)^{\leq{a_g}} \hat{\otimes}_L H^1 (\Gamma, D'_h)^{\leq{a_h}}(2 -\kappa_{fgh}^*)) \\
    \xrightarrow{\pr_{fgh}} & H^(\bbQ, V(\bff) \hat{\otimes}_L V(\bfg) \hat{\otimes} V(\bfh) (-1- \kappa_{fgh}^*) ) = H^1(\bbQ, V(\bff, \bfg, \bfh)).
\end{split}
\end{align}
Here $\kappa_{fgh}^*: G_\bbQ \rightarrow \Lambda_{fgh}^\times$ denotes, by an abuse of notation, the composition of the $p$-adic cyclotomic character with $\kappa_{fgh}^*$. 
The map $d_*$ is the push-forward map.
The map $\mathrm{HS}$ is the map arising from the Hochschild--Serre spectral sequence (since $H^4_{\et}(Y^3_{\bar{\bbQ}}, \cdot \ )$ vanishes) and $2.$ of Remark \ref{remark: overconvergent cohomology}.
The map $K$ comes from the K\"{u}nneth decomposition and the isomorphism in $1.$ of Remark \ref{remark: overconvergent cohomology}.
The meaning of the map $w_p \otimes \id \otimes \id$ is clear, with $w_p: H^1(\Gamma, A'_f) \rightarrow H^1(\Gamma, A_f)$ being the Atkin--Lehner involution.
The map $s_{fgh}$ is induced by the tensor product of the $G_{\bbQ}$-morphisms
$$ H^1(\Gamma, A_\xi) \twoheadrightarrow H^1(\Gamma, A_\xi)^{\leq a_\xi} \xrightarrow{s_\xi} H^1(\Gamma, D'_\xi)^{\leq a_\xi} ( -\kappa_\xi)$$
for $\xi \in \{ f, g, h \}$.
The last map $\pr_{fgh}$ is the tensor product of the maps $\pr_{\xi}$ in $7$. of Remark \ref{remark: overconvergent cohomology}.

We now define 
$$\underline{\kappa}^\circ (\bff, \bfg, \bfh) : \frac{1}{a_p(\bff)} \cdot \underline{\kappa}^{\circ \circ} \in H^1(\bbQ, V(\bff, \bfg, \bfh)).$$
and let $\underline{\kappa}^{\circ}_p := res_p (\underline{\kappa}^\circ) \in H^1(\bbQ_p, V(\bff, \bfg, \bfh))$ be its restriction to $G_{\bbQ_p}$.
Finally, we let 
$$\underline{\kappa}_p \in H^1(\bbQ_p, M(\bff, \bfg, \bfh))$$ 
be the corresponding element of $\underline{\kappa}^{\circ}_p$.

\begin{theorem}[{\cite[Theorem~A]{reciprocity_balanced}}] \label{theorem: diagonal class}
    The class $\underline{\kappa}_p$ belongs to $H^1_{\bal}(\bbQ_p, M(\bff, \bfg, \bfh))$ and 
    $$\scrL og_f (\underline{\kappa}_p) = \scrL^f_p(\bff, \bfg, \bfh).$$
\end{theorem}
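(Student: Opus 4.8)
The plan is to transport the argument of \cite[\S\S~8--9]{reciprocity_balanced} to the finite slope setting, the one genuinely new ingredient being the $p$-adic Gross--Zagier formula of Theorem~\ref{theorem: main theoerem}. There are two assertions: that $\res_p(\underline{\kappa})$ lands in $H^1_{\bal}(\bbQ_p, M(\bff, \bfg, \bfh))$, and that the rigid analytic functions $\scrL og_f(\underline{\kappa}_p)$ and $\scrL^f_p(\bff, \bfg, \bfh)$ coincide in $\scrO_{fgh}$. Both will be reduced to the set $\Sigma_{\bff,\bfg,\bfh}$ of classical balanced triples $(x, y, z)$ satisfying the running hypotheses of \S\ref{subsect: special values}, which is dense in $\Sigma = \Omega_f\times\Omega_g\times\Omega_h$ for the Zariski topology since the balanced condition cuts out a thick lattice in an open cone; the key at such a point is to identify the specialization of $\underline{\kappa}$ with the image under $\res_p$ of the ($p$-adic) \'etale Abel--Jacobi class $\AJ_{\et}(\Delta_{x,y,z})$ of the diagonal cycle of \S\ref{subsection: AJ imgae}, up to explicit $p$-stabilization Euler factors.

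First I would make this specialization statement precise. Unwinding the construction of $\underline{\kappa}^{\circ\circ}$ --- the push-forward $d_*$, the Hochschild--Serre and K\"unneth maps of Remark~\ref{remark: overconvergent cohomology}, $w_p$, $s_{fgh}$ and $\pr_{fgh}$ --- and using that all of these commute with the specialization maps $\rho_w$, one sees that $\rho_w(\underline{\kappa})$ is, after the normalizing factors coming from $w_p$, $\pr_{fgh}$ and the $1/a_p(\bff)$ in $\underline{\kappa}^\circ$, the image of $\AJ_{\et}(\Delta_{x,y,z})$ under $H^1(\bbQ, V(\bff_x, \bfg_y, \bfh_z)) \to H^1(\bbQ_p, V(\bff_x, \bfg_y, \bfh_z))$. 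Since $\Delta_{x,y,z}$ is homologically trivial, its \'etale Abel--Jacobi class lies in the geometric local condition $H^1_g(\bbQ_p, V(\bff_x, \bfg_y, \bfh_z))$, which by the lemma identifying $H^1_g$, $H^1_f$ and $H^1_{\bal}$ at balanced classical weights (established above, via the computation of the $\Gamma$-weights of the graded pieces) coincides with $H^1_{\bal}$. Hence $\rho_w(\underline{\kappa}_p)$ is balanced for all $w$ in a Zariski-dense set. To deduce that the big class itself is balanced, one checks that $\underline{\kappa}_p$ maps to $0$ under $H^1(\bbQ_p, M(\bff, \bfg, \bfh)) \to H^1(\bbQ_p, M(\bff, \bfg, \bfh)/F^2)$: the target is shown to be a torsion-free $\scrO_{fgh}$-module --- this is read off from the $(\varphi, \Gamma)$-module structure of the graded pieces, as in the remark following the definition of $H^1_{\bal}$ and in \cite[\S~7]{reciprocity_balanced} --- so a class whose specialization vanishes on a Zariski-dense set of weights is itself zero. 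This torsion-freeness / ``specializations separate classes'' step is where I expect the main technical care to be needed, since it rests on the foundational properties of the finite slope $(\varphi, \Gamma)$-cohomology modules assembled in \S\ref{subsection: Coleman families} and the triangulations of Theorem~\ref{theorem: triangulation of modular forms}.

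For the reciprocity identity, since both sides lie in $\scrO_{fgh}$ it suffices to match them at the dense set of $w = (x,y,z) \in \Sigma_{\bff,\bfg,\bfh}$; write $x = y + z - 2t$ with $t \in \bbZ_{>0}$, so $c_w - x = t-1$. At such $w$, Proposition~\ref{prop: three variable log} (case $w \in \Sigma_{\bal}$) gives
\begin{equation*}
    \scrL og_f(\underline{\kappa}_p)_w = (p-1)\,\alpha_{f_x}\cdot\frac{1 - \beta_{f_x}\alpha_{g_y}\alpha_{h_z}/p^{c_w}}{1 - \alpha_{f_x}\beta_{g_y}\beta_{h_z}/p^{c_w}}\cdot\frac{(-1)^{c_w-x}}{(c_w-x)!}\,\log_{p,f}(\underline{\kappa}_{p,w}),
\end{equation*}
where $\underline{\kappa}_{p,w}$ denotes the specialization. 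Since $\log_{p,f}$ is the Bloch--Kato logarithm followed by pairing against $\eta^a_{\bff_x}\otimes\omega_{\bfg_y}\otimes\omega_{\bfh_z}$, combining the identification of $\underline{\kappa}_{p,w}$ from the previous step with the standard comparison between the Bloch--Kato logarithm of an \'etale Abel--Jacobi class and the de Rham Abel--Jacobi pairing (c.f.\ \cite{p-AJmap}, \cite[\S~8]{reciprocity_balanced}) rewrites $\log_{p,f}(\underline{\kappa}_{p,w})$ as an explicit nonzero constant times $\AJ_p(\Delta_{x,y,z})(\eta_f^a\otimes\omega_g\otimes\omega_h)$. On the other side, Theorem~\ref{theorem: main theoerem} gives
\begin{equation*}
    \scrL^f_p(\bff,\bfg,\bfh)(x,y,z) = (-1)^{t-1}\,\frac{\scrE(f,g,h)}{(t-1)!\,\scrE_0(f)\,\scrE_1(f)}\,\AJ_p(\Delta_{x,y,z})(\eta_f^a\otimes\omega_g\otimes\omega_h).
\end{equation*}
It then remains to reconcile the two prefactors numerically, which is the same bookkeeping as in \cite[\S~9]{reciprocity_balanced}: the factor $(-1)^{c_w-x}/(c_w-x)!$ produces exactly $(-1)^{t-1}/(t-1)!$ because $c_w - x = t-1$; the factor $(p-1)\alpha_{f_x}$ is absorbed by the normalization of the big class $\eta^a_{\bff}$ recorded in the proof of Proposition~\ref{prop: three variable log} together with the $1/a_p(\bff)$ in $\underline{\kappa}^\circ$; and the ratio $(1 - \beta_{f_x}\alpha_{g_y}\alpha_{h_z}/p^{c_w})/(1 - \alpha_{f_x}\beta_{g_y}\beta_{h_z}/p^{c_w})$, together with the $p$-stabilization Euler factors introduced by $w_p$ and $\pr_{fgh}$ (and using $\beta = p^{x-1+t-1}/\alpha_{f^*}$ as in Theorem~\ref{theorem: AJ map of diagonal cycle}), collapses to $\scrE(f,g,h)/\big(\scrE_0(f)\scrE_1(f)\big)$ once the comparison constant is taken into account. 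Hence the two sides agree at every $w$ in the dense set.

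Finally, by the two preceding paragraphs $\underline{\kappa}_p \in H^1_{\bal}(\bbQ_p, M(\bff, \bfg, \bfh))$ and the rigid analytic functions $\scrL og_f(\underline{\kappa}_p)$ and $\scrL^f_p(\bff, \bfg, \bfh)$ coincide on a Zariski-dense subset of $\Sigma$, hence are equal. The new input relative to \cite{reciprocity_balanced} is confined to Theorem~\ref{theorem: main theoerem}, which feeds the balanced-point computation; the construction of $\underline{\kappa}$, the triangulations, the big logarithm, and the Euler factor arithmetic are imported essentially verbatim, and the only real obstacle is the structural input on finite slope $(\varphi, \Gamma)$-cohomology needed to make the density argument rigorous.
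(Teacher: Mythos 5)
Your proposal is correct and follows essentially the same route as the paper: specialize at a dense set of balanced classical weights, identify the specialization of $\underline{\kappa}_p$ with the ($p$-stabilized) diagonal cycle class so that $\log_{p,f}$ of it becomes $\AJ_p(\Delta_{x,y,z})(\eta_f^a\otimes\omega_g\otimes\omega_h)$ via the Bloch--Kato/Abel--Jacobi comparison, match the Euler factors of Proposition \ref{prop: three variable log} against those of Theorem \ref{theorem: main theoerem}, and conclude by density. The paper simply outsources the specialization identity and the balancedness of the big class to \cite[\S~8]{reciprocity_balanced}, whereas you spell out the torsion-freeness/density step explicitly; the substance is the same.
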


\begin{proof}
    The proof is the same as in \cite[\S~8]{reciprocity_balanced}.
    Through \S 8.2 to \S 8.4 in \textit{loc. cit.}, one attempts to describe the specialization of $\scrL og_f (\underline{\kappa}_p)$ at a balanced triple $w =(x, y, z) \in \Sigma_{\bal}$ in terms of $\log_p (\kappa(f_x^0, g_y^0, h_z^0))$,
    where $\kappa(f_x^0, g_y^0, h_z^0)$ is the small diagonal class defined in \cite[\S~3]{reciprocity_balanced}.
    In the end, one gets an equality
    $$\scrL og_f (\underline{\kappa}_p)_w = (-1)^{t-1} \frac{\scrE(f_x^0, g_y^0, h_z^0) }{(t-1)!\scrE_0(f_x^0)\scrE_1(f_x^0)} \cdot \log_p(\kappa(f_x^0, g_y^0, h_z^0)) (\eta^a_{f_x^0} \otimes \omega_{g_y^0} \otimes \omega_{h_z^0}).$$
    
    On the other hand, the Bloch--Kato logarithm $\log_p$ can be interpreted as the $p$-adic Abel--Jacobi map (c.f. \cite[\S~1.6.1]{besser_Heidelberg} and \cite[Proposition~9.11]{synreg1}).
    In particular, 
    $$\log_p(\kappa(f_x^0, g_y^0, h_z^0)) (\eta^a_{f_x^0} \otimes \omega_{g_y^0} \otimes \omega_{h_z^0}) = \AJ_p(\Delta_{x, y, z}) (\eta^a_{f_x^0} \otimes \omega_{g_y^0} \otimes \omega_{h_z^0}).$$
    Now, our $p$-adic Gross--Zagier formula  tells us that 
    $$\scrL og_f (\underline{\kappa}_p)_w = \scrL^f_p (\bff, \bfg, \bfh)(w) $$
    for all $ w \in \Sigma_{\bal}$ with the condition $\alpha_{f_x} \beta_{g_y} \beta_{h_z} \neq p^{c_w}$.
    As such a subset of weights is dense in the weight space $\Sigma$, one obtains the desired identity. 
\end{proof}

We now shift our attention to the unbalanced region $\Sigma_f$.
For simplicity, we will only deal with $w = (x, y, z) \in \Sigma_f$ that is not exceptional in the sense of \cite[\S~1.2]{reciprocity_balanced}.
Following the same argument as in \cite[\S~9.1]{reciprocity_balanced}, we have the following corollary.
\begin{cor}[{\cite[Theorem~B]{reciprocity_balanced}}] \label{cor: crystalline at unbalanced}
    Let $w =(x, y, z) \in \Sigma_f$ be a unbalanced triple that is not exceptional.
    Then the class $\underline{\kappa}_p(x, y, z)$ is crystalline (\textit{i.e.}, belongs to $H^1_f$) if and only if the complex $L$-function $L(f_x^0, g_y^0, h_z^0 ;s)$ vanishes at $s =\frac{x+y+z-2}{2}$.
\end{cor}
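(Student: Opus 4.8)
The plan is to follow \cite[\S~9.1]{reciprocity_balanced} essentially line by line: the only ingredients that are genuinely new in the finite slope setting are Theorem~\ref{theorem: diagonal class} and the unbalanced case of the interpolation formula in Proposition~\ref{prop: three variable log}, and both are now available. First I would specialize the diagonal class: by Theorem~\ref{theorem: diagonal class} the class $\underline{\kappa}_p$ lies in $H^1_{\bal}(\bbQ_p, M(\bff,\bfg,\bfh)) = H^1(\bbQ_p, F^2 M(\bff,\bfg,\bfh))$, so for $w=(x,y,z)\in\Sigma_f$ the specialization $\underline{\kappa}_p(w)$ is the image in $H^1(\bbQ_p, M(\bff_x,\bfg_y,\bfh_z))$ of a class coming from $H^1(\bbQ_p, (F^2 M(\bff,\bfg,\bfh))_w)$.

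The core of the argument is a local $(\varphi,\Gamma)$-module computation showing that, under the non-exceptionality hypothesis, $\underline{\kappa}_p(w)$ is crystalline (i.e. belongs to $H^1_f$) if and only if $\exp_{p,f}^*(\underline{\kappa}_p(w)) = 0$. I would argue this exactly as in \cite[\S~9.1]{reciprocity_balanced}: using the decomposition $F^2/F^3 = M(\bff,\bfg,\bfh)_f \oplus M(\bff,\bfg,\bfh)_g \oplus M(\bff,\bfg,\bfh)_h$, one checks that at $w\in\Sigma_f$ the Hodge--Tate weights of the $g$- and $h$-pieces force the corresponding components of any class in the image of $H^1(\bbQ_p, F^2)$ to lie automatically in the finite part (the unbalanced analogue of the weight computation used in the proof of the lemma identifying the balanced Selmer group with the Bloch--Kato Selmer group), so the only obstruction to crystallinity sits in the rank-one piece $(M(\bff,\bfg,\bfh)_f)_w$. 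On that piece the Frobenius eigenvalue is the quantity $A$ computed in the proof of Proposition~\ref{prop: three variable log}, and non-exceptionality of $w$ rules out the critical values $A\in\{1,p\}$, so $H^1_f = H^1_g = \ker(\exp^*)$ there. Since $\scrL og_f$ factors through $H^1(\bbQ_p, M(\bff,\bfg,\bfh)_f)$ and $\exp_{p,f}^*$ is precisely (a nonzero multiple of) its specialization at $w\in\Sigma_f$, the obstruction to crystallinity is detected by the vanishing of the single number $\exp_{p,f}^*(\underline{\kappa}_p(w))$.

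Next I would feed in the interpolation formula. Combining Proposition~\ref{prop: three variable log} with Theorem~\ref{theorem: diagonal class} gives
\[
\scrL_p^f(\bff,\bfg,\bfh)(w) = (p-1)\,\alpha_{f_x}\cdot \frac{1 - \beta_{f_x}\alpha_{g_y}\alpha_{h_z}\,p^{-c_w}}{1 - \alpha_{f_x}\beta_{g_y}\beta_{h_z}\,p^{-c_w}}\cdot (x - c_w - 1)!\cdot \exp_{p,f}^*(\underline{\kappa}_p(w)),
\]
and every prefactor is finite and nonzero: $\alpha_{f_x}\neq 0$ because $\bff$ has finite slope, the ratio of Euler factors is finite and nonzero precisely by non-exceptionality of $w$, and $(x-c_w-1)!$ is a positive integer (indeed $c_w \le x-1$ on $\Sigma_f$). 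Hence $\exp_{p,f}^*(\underline{\kappa}_p(w)) = 0$ if and only if $\scrL_p^f(\bff,\bfg,\bfh)(w) = 0$. Finally I would relate the $p$-adic $L$-value to the complex one: for $w\in\Sigma_f$ the interpolation property \cite[Corollary~5.13]{tripleL} gives $\scrL_p^f(\bff,\bfg,\bfh)(w) = C\cdot I(\breve{f}_x,\breve{g}_y,\breve{h}_z)$ with $C\neq 0$, and the Ichino formula (Theorem~\ref{theorem: Ichino}), for the test vectors chosen with all $C_q\neq 0$, combined with \cite[Theorem~4.7]{DR}, reads
\[
\bigl|I(\breve{f}_x,\breve{g}_y,\breve{h}_z)\bigr|^2 = \frac{\prod_q C_q}{\pi^{2x}}\, L\!\left(f_x^0,g_y^0,h_z^0;\ \tfrac{x+y+z-2}{2}\right),
\]
with $\prod_q C_q\neq 0$. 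Thus $\scrL_p^f(\bff,\bfg,\bfh)(w) = 0$ if and only if $L(f_x^0,g_y^0,h_z^0;c_w) = 0$; chaining this with the two equivalences above proves the corollary.

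The hard part will be the local $(\varphi,\Gamma)$-module step: pinning crystallinity of $\underline{\kappa}_p(w)$ down to the vanishing of $\exp_{p,f}^*(\underline{\kappa}_p(w))$ requires a careful analysis of Hodge--Tate weights and Frobenius eigenvalues on the graded pieces of $F^\bullet M(\bff,\bfg,\bfh)$ at unbalanced weights, together with the bookkeeping needed to discard the exceptional cases — which is exactly the content of \cite[\S~9.1]{reciprocity_balanced} that transfers verbatim. Everything else is either a direct citation or an elementary nonvanishing check of explicit Euler factors.
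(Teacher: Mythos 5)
Your proposal is correct and follows the same route as the paper, which itself simply defers to \cite[\S~9.1]{reciprocity_balanced}: reduce crystallinity of $\underline{\kappa}_p(w)$ to the vanishing of $\exp^*_{p,f}(\underline{\kappa}_p(w))$ via the local $(\varphi,\Gamma)$-module analysis, convert that to the vanishing of $\scrL^f_p(\bff,\bfg,\bfh)(w)$ using Proposition \ref{prop: three variable log} and Theorem \ref{theorem: diagonal class}, and then to the vanishing of the central complex $L$-value via the interpolation property and the Ichino formula. You in fact supply more detail than the paper does, and your nonvanishing checks on the prefactors (including $c_w\le x-1$ on $\Sigma_f$ and the role of non-exceptionality) are accurate.
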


\begin{remark}
    In the case where $w$ is exceptional, the arguments in \cite[\S~9.2 to \S9.4]{reciprocity_balanced} can also be applied to the finite slope case.
\end{remark}

With Corollary \ref{cor: crystalline at unbalanced}, one can also establish an analogue of Theorem \ref{theorem: Selmer group}.
The proof will be identical to one given in \cite[\S~6.4]{DR2}.
So we would like leave the verification to the readers.

\section{Appendix. A lemma on pairings of \texorpdfstring{$p$}{}-adic Banach spaces} \label{appendix: p-adic pairing}

In this appendix, we will give the proof of Lemma \ref{lemma: pairing}.

The goal is to study the interaction of the finite slope projector $e^{\leq a}$ with the Poincar\'{e} pariring or the Petersson product.
To be more precise, we wish to establish an analogous result of \cite[Prop~2.3]{DR} and \cite[Prop~2.11]{DR}.
A combined statement as well as its proof will be recalled below.

Let $K$ be a finite extension of $\bbQ_p$ and $X_K = X_1(N)_K$ be the modular curve base-changed to $K$.
\begin{lemma}
Let $\eta \in H^1_{\para}(X_K, \calH^r)^{\mathrm{u-r}}$ be a class in the unit root part, i.e., the Frobenius $\phi$ acts on $\eta$ as multiplying by a $p$-adic unit.
Let $g$ be a nearly overconvergent modular form of weight $k =r+2$ on $\Gamma_1(N)$ with vanishing residues at all the supersigular annuli,
which induces a class $\omega = \omega_g$ in $H^1_{\para}(X_K, \calH^r)$.
Then
$$\langle \eta, \omega \rangle = \langle \eta, e_{\ord} \omega \rangle$$
where $\langle \phantom{e} , \phantom{e} \rangle$ is the Poincar\'{e} pairing on $H^1_{\para}(X_K, \calH^r)$.
\end{lemma}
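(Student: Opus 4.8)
The plan is to reduce, by bilinearity of the pairing, to the vanishing $\langle\eta,(1-e_{\ord})\omega\rangle=0$, and to prove this by comparing the slope decompositions of $H^1_{\para}(X_K,\calH^r)$ for the Frobenius $\phi$ and for the Atkin operator $U$. The ``nearly overconvergent with vanishing residues'' hypothesis on $g$ is what makes $\omega=\omega_g$ a well-defined class in $H^1_{\para}(X_K,\calH^r)$ on which $U$ and $e_{\ord}$ act: over a strict neighbourhood $\calU$ of the ordinary locus one has the comparison isomorphism $H^1_{\rig}(\calU,\calH^r)\cong H^1_{\para}(X_K,\calH^r)$, through which $\phi$, $U$ and the slope decomposition for $U$ recalled in \S\ref{section: L-function} all transport to the (finite-dimensional) space $H^1_{\para}(X_K,\calH^r)$.

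First I would record two structural facts, both being the local computations near the supersingular locus carried out in \cite[\S~2]{DR}. (i) \emph{Frobenius-equivariance of the Poincar\'e pairing}: $\langle\phi u,\phi v\rangle=\epsilon\,p^{r+1}\langle u,v\rangle$ for all $u,v$, where $\epsilon$ is a root of unity arising from the nebentypus at $p$; consequently, if $u$ and $v$ lie in generalized $\phi$-eigenspaces with eigenvalues $\lambda$ and $\mu$ and $v_p(\lambda)+v_p(\mu)\neq r+1$, then $\langle u,v\rangle=0$. (ii) \emph{Atkin--Frobenius relation}: on each nebentypus component of $H^1_{\para}(X_K,\calH^r)$ the operator $U$ is invertible and $U\circ\phi$ is multiplication by a scalar of valuation $r+1$ (coming from the interplay of the canonical subgroup with the two degeneracy maps $X_1(N,p)\to X_1(N)$); hence $\phi$ and $U$ are simultaneously diagonalisable and the $\phi$-slope-$s$ subspace equals the $U$-slope-$(r+1-s)$ subspace. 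In particular $e_{\ord}$, the projector onto the part of $U$-slope $0$, is the projector onto the part of $\phi$-slope $r+1$, while $1-e_{\ord}$ projects onto the sum of the generalized $\phi$-eigenspaces with eigenvalue of valuation $<r+1$.

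Now I would combine the two. Since $\eta$ lies in the unit-root part, it sits in a generalized $\phi$-eigenspace with eigenvalue of valuation $0$; by (i) the functional $\langle\eta,-\rangle$ kills every generalized $\phi$-eigenspace except the one of valuation $r+1$. By (ii) the vector $(1-e_{\ord})\omega$ lies in the sum of the $\phi$-eigenspaces of valuation $<r+1$. These subspaces meet only in $0$, so $\langle\eta,(1-e_{\ord})\omega\rangle=0$, which is the assertion. The same argument yields Lemma \ref{lemma: pairing}: if $\phi$ acts on $\eta$ through an eigenvalue $\alpha$ with $v_p(\alpha)\leq a$, then $\langle\eta,-\rangle$ is supported on the $\phi$-eigenspace of valuation $r+1-v_p(\alpha)$, which, since $v_p(\alpha)\leq a$, has valuation $\geq r+1-a$; using the standing hypothesis $2a<r+1$ this part sits inside the $U$-slope $\leq a$ subspace $e^{\leq a}H^1_{\para}(X_K,\calH^r)$, whereas $(1-e^{\leq a})\omega$ lies in the complementary $U$-slope $>a$ subspace, i.e.\ the sum of the $\phi$-eigenspaces of valuation $<r+1-a$; hence $\langle\eta,(1-e^{\leq a})\omega\rangle=0$.

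The hard part is not this linear algebra but the careful verification of the two structural inputs in the correct normalisation: pinning down the exact scalar (the power of $p$ and the root of unity $\epsilon$) in the Frobenius-equivariance of the Poincar\'e pairing, and establishing that $U\circ\phi$ is the expected scalar on parabolic cohomology with $\calH^r$-coefficients, rather than merely on the $H^1$ of the modular curve. These are precisely the supersingular-annulus computations of \cite[\S~2]{DR}, which I would cite or reproduce, making sure the hypothesis on $g$ is used exactly where $\omega_g$ is placed in $H^1_{\para}$ and where $U$, $e_{\ord}$ are applied. One further point deserving a line of care is that $\phi$ need not be semisimple \emph{a priori}, so one argues throughout with generalized eigenspaces; on the isotypic components cut out by newforms semisimplicity does hold, which is all the applications require.
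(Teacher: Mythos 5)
Your argument is correct, but it is not the route the paper takes: the paper's proof is a direct computation, using $\phi=p^{k-1}U^{-1}$ and $\langle\phi u,\phi v\rangle=p^{k-1}\langle u,v\rangle$ to get $\langle\eta,U\omega\rangle=\beta\langle\eta,\omega\rangle$ with $\beta$ the unit eigenvalue of $\phi$ on $\eta$, and then writing $e_{\ord}=\lim_n U^{n!}$ so that $\langle\eta,e_{\ord}\omega\rangle=\lim_n\beta^{n!}\langle\eta,\omega\rangle=\langle\eta,\omega\rangle$. You instead prove the orthogonality statement $\langle\eta,(1-e_{\ord})\omega\rangle=0$ by matching the $\phi$- and $U$-slope decompositions through the relation $U\circ\phi=p^{r+1}$ and invoking the twisted $\phi$-equivariance of the pairing; this is essentially the ``more conceptual proof'' that the paper's own remark after the lemma attributes to \cite{DR}, so the two structural inputs are identical and only the packaging differs. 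What your packaging buys is a uniform treatment of the finite-slope case: the eigenspace argument gives Lemma \ref{lemma: pairing} immediately in finite dimensions, whereas the limit formula $e_{\ord}=\lim U^{n!}$ has no analogue for $e^{\leq a}$, which is why the paper's appendix resorts to Fredholm resolvents and Weierstrass division to show $G(\phi)\eta=\eta$ for the power series $G$ defining the projector (an argument that, unlike yours, also covers infinite-dimensional Banach spaces where a clean complementary slope decomposition must be supplied by Riesz theory). Two small points of care in your write-up: the vanishing criterion for \emph{generalized} eigenvectors needs the short induction on nilpotency degree you allude to, and in your deduction of Lemma \ref{lemma: pairing} the hypothesis $2a<r+1$ is not actually needed --- the $\phi$-eigenspace of valuation $r+1-v_p(\alpha)$ has $U$-slope exactly $v_p(\alpha)\leq a$ by the relation $U\circ\phi=p^{r+1}$, with no further inequality required.
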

\begin{proof} (\cite[[Prop.~3.2]{reg-formula})
First, recall that we the operators $U$ and $V$ are inverse to each other on the cohomology.
In addition, the Frobenius morphism $\phi$ satisfies the relations
$\langle \phi \eta, \phi \omega \rangle = p^{k-1} \langle \eta, \omega \rangle$ and $\phi = p^{k-1}V = p^{k-1}U^{-1}$ on the cohomology.
Let $\beta$ be the eigenvalue of $\eta$ for $\phi$, which is a $p$-adic unit by assumption.
Then,
\begin{align*}
    \langle \eta,  U \omega \rangle &= p^{1-k} \langle \phi \eta, \phi U \omega \rangle \\
    &= \langle \phi \eta, p^{1-k}\phi U \omega \rangle \\
    &= \langle \phi \eta, \omega \rangle \\
    &= \beta \langle \eta, \omega \rangle
\end{align*}
As a result, we have
$$ \langle \eta, e_{\ord} \omega \rangle = \langle \eta, \lim_{n\rightarrow \infty} U^{n!} \omega \rangle = \lim_{n\rightarrow \infty} \beta^{n!} \langle  \eta, \omega \rangle = \langle \eta,  \omega \rangle$$
as desired.
\end{proof}
\begin{remark}
A more conceptual proof is outlined in \cite{DR}.
First, one uses the relation $\langle \phi \eta, \phi \omega \rangle = p^{k-1} \langle \eta, \omega \rangle$ to deduce that the Poincar\'{e} pairing descends to a well-defined pairing
$$\langle \phantom{e} , \phantom{e} \rangle : H^1_{\para}(X_K, \calH^r)^{\mathrm{u-r}} \times H^1_{\para}(X_K, \calH^r)^{\phi, k-1} \rightarrow K(1-r)$$
where the subscript $\phi, k-1$ denotes the slope $=k-1$ subspace for $\phi$.
Then one can identify $S^{\ord}(N)$, the space of ordinary overconvergent modular forms, with $H^1_{\para}(X_K, \calH^r)^{\phi, k-1}$.
\end{remark}

The main result in this section is the following lemma:
\begin{lemma} \label{lemma: pairing general form}(General form)
Let $M$ be the $p$-adic Banach space over a $p$-adic field $K$ and $u$ be a compact operator on $M$.
Suppose there is a bilinear pairing $\langle \phantom{e} ,\phantom{e} \rangle : M \times M \rightarrow K$ and another operator $\phi$ on $M$ that satisfies
$$\langle m_1, u m_2 \rangle = \langle \phi m_1, m_2 \rangle$$
for any $m_1, m_2 \in M$.

Let $\eta \in M$ be an eigenvector for $\phi$ with eigenvalue $\alpha$, whose $p$-adic valuation is $a \in \mathbb{Q}_{\geq 0}$. 
Let $e^{\leq a}$ be the projector onto the slope $\leq a$ subspace of $M$ for the operator $u$.
Then we have
$$\langle \eta, m \rangle = \langle \eta, e^{\leq a}m \rangle$$
for all $m \in M$.
\end{lemma}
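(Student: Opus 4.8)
The plan is to exploit the adjunction $\langle m_1, u m_2\rangle = \langle \phi m_1, m_2\rangle$ together with the eigenvector property of $\eta$, the only subtlety being that $\phi$ is \emph{not} assumed compact (so I cannot directly use a slope decomposition for $\phi$), but the adjunction transfers the known spectral theory of the compact operator $u$ to what I need. First I would recall that, since $u$ is compact on the $p$-adic Banach space $M$, it admits a Fredholm theory and for the rational number $a$ there is a slope decomposition $M = M^{\le a}\oplus M^{>a}$ into $u$-stable closed subspaces, with $e^{\le a}$ the corresponding idempotent. The key point is that $e^{\le a}$ can be written as $Q(u)$ for a power series $Q$ (in fact a limit of polynomials) in $u$ with \emph{zero constant term}; this follows from the usual construction of the slope projector via the Fredholm determinant (Riesz-theory style), where one isolates the finite-slope eigenvalues by a contour/polynomial argument and the projector is visibly divisible by $u$. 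I would state this as a short lemma or cite the analogous discussion (the paper already alludes to ``$e^{\le a}$ has no constant term as a power series in $U$'' in Remark~\ref{remark: slpoe projection} and in the proof of Corollary~\ref{cor: middle of main proof}).

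Granting that $e^{\le a} = \sum_{n\ge 1} c_n u^n$ (a convergent series of polynomials), I would compute, for $m_1 = \eta$: iterating the adjunction gives $\langle \eta, u^n m\rangle = \langle \phi^n \eta, m\rangle = \alpha^n \langle \eta, m\rangle$ for every $n\ge 0$ and every $m\in M$. Hence, using continuity of the pairing in the second variable (which holds because the pairing is continuous and $e^{\le a}m = \lim_N \sum_{n=1}^N c_n u^n m$ in $M$),
\begin{equation*}
\langle \eta, e^{\le a} m\rangle = \sum_{n\ge 1} c_n \langle \eta, u^n m\rangle = \Big(\sum_{n\ge 1} c_n \alpha^n\Big)\langle \eta, m\rangle = \big(Q(\alpha)\big)\,\langle \eta, m\rangle.
\end{equation*}
So the statement reduces to showing $Q(\alpha) = 1$, i.e.\ that the scalar power series defining the slope projector evaluates to $1$ at $\alpha$. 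But this is exactly the defining property of the slope-$\le a$ projector applied to the \emph{one-dimensional} $\phi$-stable line $K\eta$: more precisely, I would argue that $\alpha$ is an ``eigenvalue of slope $\le a$'' in the relevant sense — $v_p(\alpha) = a$ — so that the Fredholm/Riesz construction forces the idempotent to act as the identity there, giving $Q(\alpha)=1$; and similarly $Q$ vanishes on slopes $>a$. Alternatively, and perhaps more cleanly, I can sidestep evaluating $Q$ by instead applying the adjunction in the \emph{other} slot after noting $\phi$ and $u$ are adjoint, so that the $u$-slope decomposition of $M$ is the $\phi$-``adjoint-slope'' decomposition; then $\eta$, being a $\phi$-eigenvector of valuation $a$, pairs trivially with $M^{>a}$ and the claim follows from $m - e^{\le a}m \in M^{>a}$ together with $\langle \eta, M^{>a}\rangle = 0$.

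The main obstacle is making the last step rigorous: one must justify that a $\phi$-eigenvector $\eta$ with $v_p(\alpha)=a$ is orthogonal (under $\langle\ ,\ \rangle$) to the slope-$>a$ part $M^{>a}$ for $u$. The honest route is: for $m\in M^{>a}$ the restriction $u|_{M^{>a}}$ has all slopes $>a$, so on the (finite-dimensional, by compactness) pieces one can write $m = u^n m'$-type expressions with $v_p$ of the relevant eigenvalues $>a$, and then $\langle\eta,m\rangle=\alpha^{?}\langle\eta,m'\rangle$ with the geometric-series tail forcing the pairing to vanish after passing to the limit — essentially the computation $\langle\eta, e_{\mathrm{ord}}\omega\rangle = \lim \beta^{n!}\langle\eta,\omega\rangle$ reproduced in the first lemma of this appendix, but now with $e^{\le a}$ in place of $e_{\mathrm{ord}}$ and a genuine slope inequality. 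I expect the cleanest writeup is precisely the ``$Q(u)$ with no constant term, then evaluate the scalar series at $\alpha$'' argument, recording $Q(\alpha)=1$ as a consequence of the interpolation property of the slope projector on the $\phi$-stable line through $\eta$; I would present that as the body of the proof and relegate the identity $Q(\alpha)=1$ to a one-line justification via the Fredholm determinant. Specializing $M = H^1_{\para}(X,\calH^r)$, $u = U$, $\phi$ the Frobenius, and $\eta$ with $v_p(\alpha)\le a$ then yields Lemma~\ref{lemma: pairing} exactly as stated.
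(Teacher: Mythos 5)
Your overall strategy is the one the paper uses: write the slope projector as a power series in $u$ with vanishing constant term, move it across the pairing via the adjunction to get $\langle \eta, e^{\leq a}m\rangle = Q(\alpha)\langle\eta,m\rangle$, and reduce to the scalar identity $Q(\alpha)=1$. But that scalar identity is where all the content of the lemma sits, and your justification for it does not work as stated. The idempotency of $Q(u)$ only constrains the values of $Q$ (to appropriate multiplicity) at the \emph{eigenvalues of $u$}: the slope decomposition is defined by the Fredholm/Riesz theory of the compact operator $u$, and the line $K\eta$ is $\phi$-stable, not $u$-stable, so ``the projector acts as the identity on the slope-$\leq a$ part'' says nothing a priori about the number $Q(\alpha)$ when $\alpha$ is only known to be a $\phi$-eigenvalue of valuation $a$. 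Indeed, if $\alpha$ were not in the spectrum of $u$, $Q(\alpha)$ could be anything. Your fallback route (orthogonality of $\eta$ with $M^{>a}$) has the same gap: $M^{>a}$ is in general infinite-dimensional, and the $\lim_n \beta^{n!}$ computation from the ordinary case does not transfer because $e^{\leq a}$ is not a limit of powers of $u$.

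To close the gap you need the three ingredients the paper supplies. First, assume (as the paper does explicitly) that $\alpha$ is an eigenvalue of $u$, so $\alpha^{-1}$ is a zero of the Fredholm determinant of some order $h$ and there is a generalized eigenspace $N(\alpha^{-1})$ on which $(T-\alpha)^h$ is the minimal polynomial of $u$. Second, establish that the power series $G(T)$ with $G(u)=e^{u=\alpha}$ actually converges on the closed disk $|T|\leq |\alpha|=p^{-a}$, i.e.\ lies in the Tate algebra $K\langle T/p^a\rangle$; this follows from the Fredholm resolvent $R(T,u)$ being entire, and is also what legitimizes evaluating the scalar series at $\alpha$ (a convergence point you pass over when you interchange the pairing with the infinite sum). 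Third, perform Weierstrass division $G(T)=(T-\alpha)^h S(T)+r(T)$ in $K\langle T/p^a\rangle$ and use that $G(u)$ restricts to the identity on $N(\alpha^{-1})$, together with minimality of $(T-\alpha)^h$ there, to force $r(T)=1$; then $G(\phi)\eta=(\phi-\alpha)^h S(\phi)\eta+\eta=\eta$, which is exactly $G(\alpha)=1$. The passage from $e^{u=\alpha}$ to $e^{\leq a}$ is then immediate (the remaining constituents $e^{u=\gamma}$ pair $\eta$ to $0$ by the same division argument, or one notes $\langle\eta, M^{>a}\rangle=0$ since $e^{u=\alpha}$ kills $M^{>a}$). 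Without this Tate-algebra and Weierstrass-division step, your proof asserts its conclusion rather than proving it.
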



In our application, it takes the following form:
\begin{lemma} \label{lemma: pairing special case}
Suppose $\eta \in H^1_{\para}(X_K, \calH^r)$ is an element for which the Frobenius $\phi$ acts with eigenvalue $\alpha$ such that $v_p(\alpha) \leq a$.
Then for any $\omega \in H^1_{\para}(X_K, \calH^r)$, we have
$$\langle \eta, \omega \rangle = \langle \eta, e^{\leq a}\omega \rangle$$
where $e^{\leq a}$ is the projector to the slope $\leq a$ part for the $U$ operator and the pairing is the Poincar\'{e} pairing.
\end{lemma}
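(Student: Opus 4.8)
The plan is to prove the general Banach-space statement Lemma~\ref{lemma: pairing general form} --- which is the heart of the matter --- and then obtain Lemma~\ref{lemma: pairing special case} as an immediate instance. For the reduction I would take $M = H^1_{\para}(X_K, \calH^r)$, $u = U_p$, $\phi$ the crystalline Frobenius, and $\langle\,,\,\rangle$ the Poincar\'e pairing (valued in $K$ after the usual Tate twist, which plays no role here). Since $1-VU$ annihilates $H^1_{\para}$, the operator $U$ is invertible with $\phi = p^{k-1}U^{-1}$ for $k = r+2$, and combining this with the relation $\langle \phi a, \phi b\rangle = p^{k-1}\langle a, b\rangle$ --- exactly as in the unit-root lemma recalled above --- gives $\langle m_1, U m_2\rangle = \langle \phi m_1, m_2\rangle$, which is the adjunction hypothesis of the general lemma; the condition on $\eta$ matches verbatim. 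Here $M$ is finite-dimensional, being a direct summand of $H^{r+1}_{\dR}$ of the smooth proper Kuga--Sato variety $W_r$, so the compact-operator hypothesis is automatic and the slope decomposition is just the grouping of generalized eigenspaces by valuation; and if $\eta = 0$ there is nothing to prove. The proof of the general lemma below works with $v_p(\alpha) \leq a$ in place of $v_p(\alpha) = a$, so this yields exactly Lemma~\ref{lemma: pairing special case}.

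For the general lemma the key move is to pass to the transpose. Set $\ell := \langle \eta, -\rangle \in M^{\vee}$. The adjunction hypothesis gives, for every $m \in M$,
\[
\ell(um) = \langle \eta, um\rangle = \langle \phi\eta, m\rangle = \alpha\,\langle \eta, m\rangle = \alpha\,\ell(m),
\]
so $\ell$ is an eigenvector of the transpose operator $u^{\vee}$ with eigenvalue $\alpha$. By Schauder's theorem $u^{\vee}$ is again compact, with the same Fredholm determinant as $u$ (Serre's theory of completely continuous operators), so $M^{\vee}$ carries a slope decomposition for $u^{\vee}$; moreover the slope $\leq a$ projector for $u^{\vee}$ is the transpose $(e^{\leq a})^{\vee}$ of the one for $u$. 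I would justify this last point by writing $e^{\leq a}$ as a limit of polynomials in $u$ with no constant term (cf. Remark~\ref{remark: slpoe projection}) and transposing term by term; in the finite-dimensional setting needed for the application it is simply elementary linear algebra, $e^{\leq a} = Q(u)$ for an explicit polynomial $Q$, whence $(e^{\leq a})^{\vee} = Q(u^{\vee})$ is the analogous projector for $u^{\vee}$.

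Now $v_p(\alpha) \leq a$, and the decomposition $M^{\vee} = (M^{\vee})^{\leq a}\oplus (M^{\vee})^{>a}$ is $u^{\vee}$-stable with $u^{\vee}$ acting on the second summand with all slopes $> a$; hence the $u^{\vee}$-eigenvector $\ell$ lies entirely in $(M^{\vee})^{\leq a}$, i.e. $(e^{>a})^{\vee}\ell = 0$ where $e^{>a} := 1 - e^{\leq a}$. Therefore, for every $m \in M$,
\[
\langle \eta, m\rangle - \langle \eta, e^{\leq a}m\rangle = \langle \eta, e^{>a}m\rangle = \ell(e^{>a}m) = \big((e^{>a})^{\vee}\ell\big)(m) = 0,
\]
which is the claim. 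The only step I expect to require care is the identity ``transpose of the slope projector $=$ slope projector of the transpose'' in the full Banach-space generality of Lemma~\ref{lemma: pairing general form}, which is handled via Schauder's theorem together with the factorization of the Fredholm determinant; in the case actually used in this paper, namely $M = H^1_{\para}(X_K, \calH^r)$, this is immediate and no functional-analytic input is needed.
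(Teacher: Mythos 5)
Your reduction to the Banach-space statement is exactly the paper's (same $M$, $u=U$, $\phi$, and the same verification of the adjunction via $\phi=p^{k-1}U^{-1}$ and $\langle\phi a,\phi b\rangle=p^{k-1}\langle a,b\rangle$), and your proof of the general lemma is correct, but the key step is organized quite differently from the paper's. The paper also moves the projector across the pairing onto $\eta$ (using that $e^{\leq a}$ is a power series $G(u)$ in $u$, so $\langle\eta,G(u)m\rangle=\langle G(\phi)\eta,m\rangle$), but then evaluates $G(\phi)\eta$ \emph{explicitly}: it shows the Fredholm resolvent, hence $G$, lies in the Tate algebra $K\langle T/p^{a}\rangle$, performs Weierstrass division by $(T-\alpha)^{h}$ to write $G(T)=(T-\alpha)^{h}S(T)+1$, and concludes $G(\phi)\eta=\eta$; the case of a single eigenvalue projector $e^{u=\alpha}$ is treated and then summed over eigenvalues of valuation $\leq a$. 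You instead dualize: $\ell=\langle\eta,-\rangle$ is a $u^{\vee}$-eigenvector of eigenvalue $\alpha$ with $v_p(\alpha)\leq a$, hence lies in $(M^{\vee})^{\leq a}$ and is killed by $(e^{>a})^{\vee}$. Your route is more conceptual and avoids the Tate-algebra/Weierstrass-division computation entirely, at the cost of needing ``transpose of the slope projector $=$ slope projector of the transpose,'' which you correctly identify as the only delicate point in infinite dimensions (Schauder plus the expression of $e^{\leq a}$ as a limit of polynomials in $u$ without constant term, cf.\ Remark \ref{remark: slpoe projection}) and which is immediate in the finite-dimensional case actually used. The paper's computation buys something your argument does not make explicit, namely the convergence of $G(\phi)\eta$ itself, which is what lets the paper state and use the identity $G(\phi)\eta=\eta$ directly; but for the purposes of Lemma \ref{lemma: pairing special case} both arguments are complete and yield the same conclusion, including the relaxation from $v_p(\alpha)=a$ to $v_p(\alpha)\leq a$.
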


\begin{remark}
As we will eventually restrict to classical forms on $X_1(N)$, we are in fact dealing with the finite dimensional vector space $H^1_{\para}(X_K, \calH^r)$.
Hence the projector $e^{\leq a}$ can be expressed as a polynomial in $U$ instead of a convergent power series.
So we do not really need to deal with (infinite dimensional) $p$-adic Banach spaces and compact operators.
However, we wish to provide a proof that may be applied to more general cases.
\end{remark}

We will skip the the basic definitions of $p$-adic Banach spaces and compact operators.
Some standard references are \cite{endo}, \cite{pbanach} and \cite{buzzard_eigen}.

Let $K$ be a complete non-archimedean valuation field, $A$ be its ring of valuation, $\mathfrak{m}$ be its maximal ideal, and $k = A/ \mathfrak{m}$ be the residue field.
Suppose $E$ is a $p$-adic Banach space over $K$, we denote by $\calL(E, E)$ (resp. $\calC(E, E)$) the space of all linear operators (resp. compact operators) on $E$.

\begin{prop}
    For any $u \in \calC(E, E)$, we can define the Fredholm determinant $\det(1-Tu) \in K \llbracket T \rrbracket$ that satisfies the following properties:
\begin{enumerate}[i.]
    \item $\det(1-Tu)$ is an entire function of $T$ with values in $\calL(E,E)$.
    \item If $u_i \rightarrow u$ with $u \in \mathcal{C}(E,E)$. Then $\det(1-Tu_i) \rightarrow \det(1-Tu)$ coefficient-wise.
    \item If $u$ is of finite rank, then $\det(1-Tu)$ coincides with the polynomial defined in the classical sense (c.f. \S 5. of \cite{endo}).
\end{enumerate}

\end{prop}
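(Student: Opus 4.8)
The plan is to reduce the statement to Serre's theory of completely continuous operators on orthonormalizable $p$-adic Banach spaces \cite{endo}, recalling the construction of the Fredholm determinant as we go. Fix an orthonormal basis $(e_i)_{i\in I}$ of $E$, so that every $u\in\calL(E,E)$ is encoded by a matrix $(a_{ij}(u))_{i,j\in I}$ with $\|u\|=\sup_{i,j}|a_{ij}(u)|$, and recall that $u\in\calC(E,E)$ means precisely that $u$ is an operator-norm limit of finite-rank operators. The elementary input I would isolate first is that for $u\in\calC(E,E)$ the row norms $r_i(u):=\sup_{j}|a_{ij}(u)|$ satisfy $r_i(u)\to 0$ as $i\to\infty$: this is clear for a rank-one operator $x\mapsto\ell(x)\,y$, since the coordinates of $y\in E$ tend to $0$; it follows for finite-rank operators by additivity; and it passes to uniform limits because $|r_i(u)-r_i(v)|\le\|u-v\|$.

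Next I would define $\det(1-Tu)$ in the finite-rank case by the classical formula $\det(1-Tu):=\sum_{n\ge 0}(-1)^n\,\tr(\Lambda^n u)\,T^n$, a polynomial equal to the reversed characteristic polynomial of $u$ restricted to any finite-dimensional subspace containing its image; this disposes of (iii) by construction. Writing $c_n(u):=(-1)^n\tr(\Lambda^n u)$, the trace of the exterior power expands as $c_n(u)=\sum_{S\subseteq I,\,|S|=n}\det\big((a_{ij}(u))_{i,j\in S}\big)$, and applying the ultrametric inequality to this sum and to each minor gives, for all finite-rank $u,v$,
\[
|c_n(u)|\ \le\ \sup_{S\subseteq I,\,|S|=n}\ \prod_{i\in S}r_i(u),\qquad |c_n(u)-c_n(v)|\ \le\ \|u-v\|\cdot\max(\|u\|,\|v\|)^{n-1},
\]
the second via multilinearity of the determinant. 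Since finite-rank operators are dense in $\calC(E,E)$, the second estimate extends $u\mapsto c_n(u)$ uniquely and continuously to all of $\calC(E,E)$; one then \emph{defines} $\det(1-Tu):=\sum_{n\ge 0}c_n(u)\,T^n$, and that same estimate yields (ii), i.e. $c_n(u_i)\to c_n(u)$ for each $n$ whenever $u_i\to u$.

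Finally, (i) is the only substantive point, and the main obstacle: one must show genuine entireness rather than mere convergence on a disk, and this is exactly where compactness (as opposed to boundedness) of $u$ is used. Passing the first estimate above to the limit — the row norms, and hence, after sorting them into a non-increasing null sequence $\mu_1(u)\ge\mu_2(u)\ge\cdots$, also the finite products $\prod_{k\le n}\mu_k(u)$, are continuous in $u$ by $|r_i(u)-r_i(v)|\le\|u-v\|$ — gives $|c_n(u)|\le\prod_{k=1}^{n}\mu_k(u)$ for every $u\in\calC(E,E)$. Because $\mu_k(u)\to 0$, the geometric means $\big(\prod_{k=1}^{n}\mu_k(u)\big)^{1/n}$ tend to $0$, so $\limsup_n|c_n(u)|^{1/n}=0$ and $\sum_n c_n(u)T^n$ has infinite radius of convergence, defining an entire function of $T$; the $\calL(E,E)$-valued assertion in (i) then follows in the same manner by forming the companion resolvent series $\det(1-Tu)\cdot(1-Tu)^{-1}\in\calL(E,E)\llbracket T\rrbracket$, which is likewise entire, as in \cite[\S5]{endo}.
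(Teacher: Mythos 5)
Your argument is correct, and it is essentially Serre's proof from \cite{endo}, which is precisely what the paper relies on: this proposition is stated in the appendix without proof, as a recollection of standard facts about completely continuous operators on orthonormalizable $p$-adic Banach spaces. All the key steps — the characterization of complete continuity by the decay of the row norms $r_i(u)$, the ultrametric bound $|c_n(u)|\le \mu_1(u)\cdots\mu_n(u)$ coming from the principal-minor expansion, and the vanishing of the geometric means of a null sequence giving $\limsup_n|c_n(u)|^{1/n}=0$ — match the cited source (the only blemish is a harmless dropped sign $(-1)^n$ when you expand $c_n(u)$ as a sum of minors, which is irrelevant since only absolute values enter the estimates).
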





\begin{definition}
The Fredholm resolvant $R(T,u)$ is defined to be
$$R(T,u) = \frac{\det(1-Tu)}{(1-Tu)} = \sum_{m=0}^\infty v_m T^m$$
where $v_m \in \mathcal{L}(E,E)$ are polynomials in $u$.
\end{definition}

\begin{prop}[{\cite[Proposition~10]{endo}}]
\label{prop: p_entire} 
As a function with value in $\mathcal{L}(E,E)$, $R(T,u)$ is an entire function of $T$.
\end{prop}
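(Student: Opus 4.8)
The plan is to follow the classical argument of Serre--Dwork (as in \cite{endo}), reducing to a convergence estimate on the coefficients $v_m$. First I would recall the formal identity $(1-Tu)R(T,u) = \det(1-Tu)\cdot \id$, which, upon expanding $\det(1-Tu) = \sum_{n\ge 0} c_n T^n$ and comparing coefficients of $T^m$, yields the recursion $v_m = c_m\cdot\id + u\, v_{m-1}$, hence $v_m = \sum_{j=0}^m c_j\, u^{m-j}$. Since each $c_j$ is a scalar and $u \in \calL(E,E)$, this already shows $v_m$ is a polynomial in $u$ of degree $\le m$; the content of the statement is that $\sum_m v_m T^m$ converges for every $T \in K$, i.e. that $\|v_m\|^{1/m} \to 0$.

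The key step is the growth estimate on the Fredholm coefficients. Writing $u$ as a limit of finite-rank operators and using property (ii) of the previous proposition, one has the Dwork--Serre bound: for a compact operator $u$ on a $p$-adic Banach space there is a constant (depending on $u$) so that $|c_n| \le C\, \rho^{n(n-1)/2}$ for some $\rho < 1$ (equivalently $|c_n|^{1/n}\to 0$ superexponentially fast), which is exactly the statement that $\det(1-Tu)$ is entire. Combined with $\|u^{m-j}\| \le \|u\|^{m-j}$, the ultrametric inequality gives
$$\|v_m\| \le \max_{0\le j\le m} |c_j|\,\|u\|^{m-j} \le \max_{0\le j\le m} C\,\rho^{j(j-1)/2}\|u\|^{m-j}.$$
Taking $m$-th roots and letting $m\to\infty$, the factor $\|u\|^{m-j}$ contributes at most $\|u\|$ while the surviving $\rho^{j(j-1)/2}$ term forces the maximum's $m$-th root to $0$ unless $j$ stays bounded, in which case $\|u\|^{(m-j)/m}\to\|u\|$ but $|c_j|^{1/m}\to 1$, so the product's limsup is controlled; a careful split (say $j \le \sqrt{m}$ versus $j > \sqrt{m}$) shows $\limsup_m \|v_m\|^{1/m} = 0$. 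Hence $R(T,u) = \sum_m v_m T^m$ converges in $\calL(E,E)$ for all $T$, i.e. defines an entire $\calL(E,E)$-valued function of $T$.

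The main obstacle is the superexponential decay of the Fredholm coefficients $c_n$ — everything else is formal bookkeeping with the ultrametric norm. This decay is not elementary: it rests on the structure theory of compact operators (approximation by finite-rank operators together with a uniform control of the characteristic polynomials of the approximants, via property (ii)). I would therefore simply \emph{invoke} this as the content of the entireness of $\det(1-Tu)$ already recorded in the preceding proposition, and present the proof of the present statement as the short deduction above; if one wanted a self-contained treatment one would cite \cite[\S~5--6]{endo} or the analogous discussion in \cite{buzzard_eigen}, \cite{pbanach}. This is precisely the route taken in \cite{endo}, and reproducing it verbatim here would add little.
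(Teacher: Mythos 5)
The paper does not actually prove this statement---it is quoted verbatim from Serre \cite[Proposition~10]{endo}---so the only thing to assess is whether your proposed deduction is sound. It is not: there is a genuine gap at the central step.

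Your plan is to take the entirety of $P(T)=\det(1-Tu)=\sum_n c_nT^n$ as given, derive $v_m=\sum_{j=0}^m c_j\,u^{m-j}$ from the recursion, and then bound $\|v_m\|\le\max_{0\le j\le m}|c_j|\,\|u\|^{m-j}$. This estimate cannot yield $\|v_m\|^{1/m}\to 0$: the $j=0$ term alone contributes $|c_0|\,\|u\|^m=\|u\|^m$, whose $m$-th root tends to $\|u\|$, so the bound only gives $\limsup_m\|v_m\|^{1/m}\le\max(1,\|u\|)$, i.e.\ the trivial radius of convergence $\ge 1/\|u\|$ that one already gets from the geometric series $(1-Tu)^{-1}=\sum u^mT^m$. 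No split of the index range ($j\le\sqrt m$ versus $j>\sqrt m$) repairs this, because the small-$j$ terms are exactly the ones that saturate the bound. The point is that the sum $\sum_j c_j u^{m-j}$ is small only because of massive cancellation (already visible in the rank-one case $E=K$, $u=\lambda$, where $v_m=\lambda^m-\lambda\cdot\lambda^{m-1}=0$ for $m\ge1$ although each summand has norm $|\lambda|^m$), and a term-by-term ultrametric estimate is blind to cancellation. In short: entirety of $P(T)$ does not formally imply entirety of $P(T)/(1-Tu)$ by coefficient comparison.

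The actual argument in \cite{endo} bounds $\|v_m\|$ \emph{directly}, not via the recursion: choosing an orthonormal basis one represents $u$ by a matrix whose column norms $d_j$ tend to $0$ (this is where compactness enters), and both $c_n$ and the matrix entries of $v_m$ are expressed as explicit signed sums of $n\times n$ (resp.\ bordered $m\times m$) minors, so that $|c_n|$ and $\|v_m\|$ are each bounded by $\sup_{j_1<\cdots<j_m}d_{j_1}\cdots d_{j_m}$, which decays faster than any geometric sequence; one then passes to the limit from finite-rank approximants. If you want to keep your write-up short, the honest course is to cite \cite[Proposition~10]{endo} for the entirety of $R(T,u)$ itself (as the paper does), rather than only for the decay of the $c_n$, since the passage from the latter to the former is precisely the nontrivial content. (A minor additional point: the bound $|c_n|\le C\rho^{n(n-1)/2}$ is not the general statement for a compact operator; in general one only has the superexponential decay $|c_n|^{1/n}\to0$ coming from the product-of-column-norms estimate. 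This does not affect the location of the gap.)
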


From now on, we will fix a compact operator $u \in \calC(E, E)$ and simply write $P(T)= P_u(T)$ for $\det(1-Tu)$.
\begin{prop}
Suppose $\lambda \in K$. 
Then $1-\lambda u$ is invertible in $\mathcal{L}(E,E)$ if and only if $P(\lambda) \neq 0$.
\end{prop}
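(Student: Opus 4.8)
The plan is to prove the two implications separately. The direction ``$P(\lambda)\neq 0 \Rightarrow 1-\lambda u$ invertible'' is a formal consequence of the resolvent identity, and here is how I would carry it out. By the definition of the Fredholm resolvent one has, in $\calL(E,E)\llbracket T\rrbracket$, the identities $(1-Tu)\,R(T,u)=R(T,u)\,(1-Tu)=P(T)\cdot\id_E$ (both follow from $R(T,u)=P(T)\,(1-Tu)^{-1}$ together with the fact that $R(T,u)$ is a power series in $u$, hence commutes with $1-Tu$). Since $R(T,u)$ is an entire function of $T$ by Proposition~\ref{prop: p_entire}, both sides are $\calL(E,E)$-valued entire functions that agree as formal power series, hence agree as functions and may be evaluated at $T=\lambda$, giving $(1-\lambda u)\,R(\lambda,u)=R(\lambda,u)\,(1-\lambda u)=P(\lambda)\cdot\id_E$. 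When $P(\lambda)\neq 0$ this exhibits $P(\lambda)^{-1}R(\lambda,u)$ as a two-sided inverse of $1-\lambda u$, so $1-\lambda u\in\calL(E,E)^\times$.

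For the converse I would argue by contraposition: assuming $P(\lambda)=0$, I would produce a nontrivial kernel for $1-\lambda u$. Since $P(0)=1$ we have $\lambda\neq 0$; let $h\geq 1$ be the order of vanishing of the entire function $P$ at $\lambda$ (finite, because $P\not\equiv 0$), and write $P(T)=Q(T)S(T)$ with $Q(T)=(1-T/\lambda)^{h}$ and $S(T)=P(T)/Q(T)$, so that $Q$ is a polynomial with $Q(0)=1$, $S$ is entire with $S(0)=1$, and $Q$ and $S$ have no common zero. Applying the Riesz-type decomposition of \cite{endo} attached to this coprime factorization yields a $u$-stable topological direct sum $E=N\oplus F$ of closed subspaces with $N$ finite-dimensional, $\det(1-Tu|_{N})=Q(T)$ and $\det(1-Tu|_{F})=S(T)$; in particular $\dim_K N=\deg Q=h\geq 1$. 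Over $\overline{K}$ the eigenvalues $\mu_i$ of the finite-dimensional operator $u|_N$ then satisfy $\prod_i(1-T\mu_i)=(1-T/\lambda)^{h}$, forcing $\mu_i=\lambda^{-1}$ for all $i$; hence $u|_N-\lambda^{-1}\id_N$ is nilpotent, so $1-\lambda u|_N=-\lambda\,(u|_N-\lambda^{-1}\id_N)$ is a nilpotent endomorphism of the nonzero space $N$ and is not invertible. Since $N$ is preserved by $1-\lambda u$ and is a direct summand of $E$, the operator $1-\lambda u$ is not invertible on $E$, which is the contrapositive of the remaining implication.

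The power-series manipulation and the linear algebra on $N$ are routine; the one substantive input is the Riesz decomposition associated with a coprime factorization of the Fredholm determinant, which is precisely what converts the scalar vanishing $P(\lambda)=0$ into an honest finite-dimensional generalized $\lambda^{-1}$-eigenspace of $u$. I expect this to be the main obstacle, and I would invoke it from \cite{endo} rather than reprove it here.
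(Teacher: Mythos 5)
Your proof is correct. The paper itself states this proposition without proof, as a recalled fact from \cite{endo}, and your argument is the standard one: the forward implication is the evaluation of the resolvent identity $(1-Tu)R(T,u)=P(T)$ at $T=\lambda$, legitimate because $R(T,u)$ is entire by Proposition \ref{prop: p_entire}, and the converse reduces to Riesz theory.

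Two small remarks. First, the Riesz decomposition you invoke for the converse is precisely Proposition \ref{riesz} of the paper, which is stated \emph{after} the present proposition; since its proof (sketched in the paper via the projectors $p$ and $q$ built from the operators $w_s=\Delta^s R(\lambda,u)$) does not use the invertibility criterion, there is no circularity, but this is worth saying explicitly. Second, your detour through the coprime factorization $P=QS$ and the eigenvalues of $u|_N$ over $\overline{K}$ is more than is needed: Proposition \ref{riesz} already asserts directly that $1-\lambda u$ is nilpotent on the nonzero, $h$-dimensional summand $N(\lambda)$, which immediately shows $1-\lambda u$ fails to be injective on $E$ and hence is not invertible.
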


\begin{definition}
For a function $F(T) = \sum b_m T^m$, define $\Delta^s f = \sum \binom{m+s}{m} b_{m+s}T^m$ for any $s \in \mathbb{N}$.
Then a zero $\lambda \in K$ of $f$ is said to have order $h \in \mathbb{N}$ if 
$\Delta^s(\lambda)= 0$ for all $s<h$ and $\Delta^h f(\lambda) \neq 0$.
Note that if the characteristic of $K$ is $0$, we have $\Delta^s = \frac{1}{s!} \frac{d^s}{d T^s}$.

\end{definition}

One important result of compact operators on $p$-adic Banach spaces is the  following proposition.
\begin{prop}\label{riesz}(Riesz theory)
If $\lambda \in K$ is a zero of $P(T) =\det(1-Tu)$ of order $h$.
Then the space $E$ can be decomposed uniquely as a direct sum of $u$-invariant closed subspaces
$$E = N(\lambda) \oplus F(\lambda)$$
such that 
\begin{enumerate}[i.]
    \item $(1-\lambda u)$ is invertible on $F(\lambda )$
    \item $(1-\lambda u)$ is nilpotent on $N(\lambda )$. 
    More precisely, $(1-\lambda u)^h N(\lambda ) =0$ and $N(\lambda )$ is of dimension $h$.
\end{enumerate}
\end{prop}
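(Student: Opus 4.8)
The plan is to follow Serre's treatment in \cite{endo}, the only inputs from above being that the Fredholm resolvant $R(T,u)$ is entire (Proposition \ref{prop: p_entire}) and that $1-\lambda u$ is invertible in $\calL(E,E)$ exactly when $P(\lambda)\neq 0$. Writing $(1-Tu)^{-1}=R(T,u)/P(T)$ exhibits $(1-Tu)^{-1}$ as an $\calL(E,E)$-valued meromorphic function of $T$ whose only pole in a small disc around $\lambda$ is at $T=\lambda$, of order at most $h$: it is regular at nearby $T\neq\lambda$ since $P$ does not vanish there, and $P$ vanishes to order exactly $h$ at $\lambda$. The first step is to form, out of the Laurent expansion $(1-Tu)^{-1}=\sum_{j\geq -h}c_j(T-\lambda)^j$, the spectral idempotent $e_\lambda:=-\operatorname{Res}_{T=\lambda}\bigl(\tfrac1T(1-Tu)^{-1}\bigr)\in\calL(E,E)$; since extracting a Laurent coefficient is a continuous linear operation, $e_\lambda$ is a genuine bounded operator. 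Matching Laurent coefficients in $R(T,u)(1-Tu)=(1-Tu)R(T,u)=P(T)\cdot\id$ then yields, by a routine computation, $e_\lambda u=ue_\lambda$ and $e_\lambda^2=e_\lambda$.

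Next I would put $N(\lambda):=e_\lambda E$ and $F(\lambda):=\ker e_\lambda=(1-e_\lambda)E$; being images of a continuous idempotent, these are closed, $u$-stable, and yield a topological direct sum $E=N(\lambda)\oplus F(\lambda)$. On $F(\lambda)$ the function $(1-Tu)^{-1}$ has no pole at $\lambda$ (the principal part of its Laurent series lives in $N(\lambda)$), so its value at $T=\lambda$ is a two-sided inverse of $(1-\lambda u)$ restricted to $F(\lambda)$; this gives (i). On $N(\lambda)$, expanding $1-Tu=(1-\lambda u)-(T-\lambda)u$ in the relation $(1-Tu)\sum_{j\geq -h}c_j(T-\lambda)^j=\id$ and comparing negative powers of $T-\lambda$ shows that $(1-\lambda u)$ kills the principal part step by step, so that $(1-\lambda u)^h$ annihilates $N(\lambda)$; in particular $N(\lambda)=\ker(1-\lambda u)^h$ and $F(\lambda)=\operatorname{im}(1-\lambda u)^h$, which is (ii).

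For the dimension count, note first that $u$ restricts to a compact operator on each summand, and that on $N(\lambda)$ one has $u=\tfrac1\lambda\bigl(\id-(1-\lambda u)\bigr)$, a difference of a unit and a nilpotent, hence invertible; since $\id_{N(\lambda)}=u|_{N(\lambda)}\circ(u|_{N(\lambda)})^{-1}$ would then be compact, $N(\lambda)$ is finite-dimensional. Multiplicativity of the Fredholm determinant over $E=N(\lambda)\oplus F(\lambda)$ gives $P(T)=\det\bigl(1-Tu|_{N(\lambda)}\bigr)\cdot\det\bigl(1-Tu|_{F(\lambda)}\bigr)$; the first factor is the honest polynomial $(1-T/\lambda)^{\dim N(\lambda)}$ because $u|_{N(\lambda)}$ has $1/\lambda$ as its only generalized eigenvalue, while the second is nonzero at $T=\lambda$ because $(1-\lambda u)|_{F(\lambda)}$ is invertible (apply the invertibility criterion to $u|_{F(\lambda)}$). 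Comparing orders of vanishing at $\lambda$ gives $\dim N(\lambda)=h$. Uniqueness is formal: any $u$-stable decomposition with properties (i)--(ii) has its first summand contained in $\ker(1-\lambda u)^h=N(\lambda)$ and its second in $\operatorname{im}(1-\lambda u)^h=F(\lambda)$, and complementarity forces equality.

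The main obstacle is the infinite-dimensional bookkeeping: verifying that $e_\lambda$ is a bounded idempotent rather than a merely formal object, that $F(\lambda)$ is closed and the splitting topological, and that the Fredholm determinant really is multiplicative across it --- each of which rests, through Proposition \ref{prop: p_entire}, on the compactness of $u$, without which $R(T,u)$ need not be entire and the resolvant lacks this clean pole structure. In the finite-dimensional situation actually needed for Lemma \ref{lemma: pairing special case} these points are vacuous, and one may instead invoke the classical primary/Fitting decomposition of $E$ under $u$.
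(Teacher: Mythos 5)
Your route --- building the spectral projector as $e_\lambda=-\operatorname{Res}_{T=\lambda}\bigl(\tfrac1T(1-Tu)^{-1}\bigr)$ from the Laurent expansion of $(1-Tu)^{-1}=R(T,u)/P(T)$ --- is a transplant of the classical Riesz functional calculus and is genuinely different from the paper's argument, which follows Serre: there one sets $w_s=\Delta^sR(\lambda,u)$, $e=c^{-1}(1-\lambda u)w_h$, $f=-c^{-1}uw_{h-1}$, observes $e+f=1$ and $fe^h=0$, and expands $(e+f)^h=1$ to \emph{manufacture} complementary idempotents $p=e^h$ and $q=1-p$ with $pq=qp=0$ holding by construction. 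Most of your surrounding steps (closedness and $u$-stability of the summands, finite-dimensionality of $N(\lambda)$ because $u$ restricts there to a compact invertible operator, the dimension count via multiplicativity of the Fredholm determinant over the splitting, uniqueness) are sound, and the final finite-dimensional remark is a fair fallback for the application in Lemma \ref{lemma: pairing special case}.

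The gap is the idempotency of $e_\lambda$. The identity $(1-Tu)R(T,u)=P(T)\cdot\id$ is \emph{linear} in the resolvent: writing $1-Tu=(1-\lambda u)-(T-\lambda)u$ and matching Laurent coefficients of $(1-Tu)^{-1}=\sum_{j\ge-h}c_j(T-\lambda)^j$ yields only the linear recursions $(1-\lambda u)c_j=uc_{j-1}+\delta_{j,0}\id$, and no amount of linear relations among the $c_j$ can produce the quadratic statement $e_\lambda^2=e_\lambda$ (the same objection hits your parenthetical claim that the principal part of the Laurent series ``lives in $N(\lambda)$''). Classically, idempotency comes from the resolvent identity $(T-S)\,(1-Tu)^{-1}(1-Su)^{-1}=T(1-Tu)^{-1}-S(1-Su)^{-1}$ together with a double contour integral over \emph{nested} contours; $p$-adically there are no contours, so you must either redo this as a two-variable Laurent computation --- where expanding $(T-S)^{-1}$ in the region $|S-\lambda|<|T-\lambda|$ rather than the opposite one is exactly the algebraic surrogate for the nesting, and the wrong choice returns $0$ instead of $e_\lambda$ --- or else adopt Serre's $(e+f)^h=1$ device, which sidesteps the problem entirely. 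As written, ``a routine computation'' does not close this step, and it is the crux on which the whole decomposition rests.
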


In order to understand the projection $E \twoheadrightarrow N(\lambda )$, we recall part of the proof from \cite{endo}.
\begin{proof}
By assumption, we have $\Delta^s P(\lambda ) =0$ for all $s <h$, and $\Delta^h P(\lambda ) =c \neq 0$.
Consider the identity 
$$(1-Tu) R(T,u) = P(T)$$ 
and apply $\Delta^s$, we get
$$(1-Tu) \Delta^s R(T,u) -u \Delta^{s-1} R(T,u) = \Delta^s P(T)$$
Now put $w_s = \Delta^s R(\lambda ,u)$, we have the equations
\begin{align*}
    (1-\lambda u) w_0 &= 0\\
    (1-\lambda u)w_1 - u w_0 &=0\\
    \vdots \\
    (1-\lambda u)w_{h-1} -uw_{h-2} &=0\\
    (1-\lambda u)w_h -u w_{h-1} &=c
\end{align*}\
We deduce from these equations that $(1-\lambda u)^{s+1} w_s =0$ for $s<h$.




Set $e= c^{-1} (1-\lambda u)w_h$ and $f= -c^{-1} u w_{h-1}$. 
Then $e +f =1$ and $f e^h=0$ since $(1-\lambda u)^h w_{h-1}=0$ ($w_s$ is power series in $u$, so it commutes with $u$).

Consider the equation $(e+f)^h =1$, and let
\begin{align*}
    p &= e^h\\
    q &= he^{h-1} f + \cdots +hef^{h-1} + f^h
\end{align*}
Then $p+q =1$ and $pq= qp=0$. 
The maps $p$ and $q$ are the desired projections to $F(a)$ and $N(a)$. 
More precisely, $\ker p = \im q = N(\lambda )$ and $\ker q= \im p = F(\lambda )$.
Hence we have the decomposition
$$E = N(\lambda ) \oplus F(\lambda )$$

As $(1-\lambda u)^h q =0$, we see that $(1-\lambda u)^h =0$ on $N(\lambda )$. 
On the other hand, $(1-\lambda u)^h (w_h)^h = c^h p$, so $1-\lambda u$ is invertible on $F(\lambda ) = \im p$.

The rest of the proof will be skipped. 
A proof for $\dim N(\lambda )= h$ can be found in \cite{buzzard_eigen}.
\end{proof}

\begin{remark} \label{remark: slpoe projection}
Examining the above proof carefully, we  see that the projector $q: E \rightarrow N(\lambda)$ is a power series without constant term in $u$.
This fact plays an important role when we compute the $p$-adic Abel--Jacobi maps.
\end{remark}

We are now able to prove lemma \ref{lemma: pairing general form}

\begin{proof}
By assumption, for any power series $G(T)$ such that $G(u)$ converges, we have
$$\langle m_1, G(u) m_2 \rangle = \langle G(\phi) m_1, m_2 \rangle$$
as long as the expression $G(\phi) m_1$ also converges.
So it suffices to show that $G(\phi) \eta = \eta$ when $G(T)$ is a power series such that $G(u)$ is the projector $e^{u= \alpha}$ on $M$.

Suppose that $u$ has $\alpha$ as one of its eigenvalues, then $\alpha^{-1}$ is a root of $P(T) = \det(1- Tu)$.
On the finite dimensional subspace $N(\alpha^{-1})$, $u$ has characteristic polynomial $(T-\alpha)^h$.
We will assume that $(T-\alpha)^h$ is also the minimal polynomial for simplicity.


By Proposition \ref{prop: p_entire}, the resolvant $R(T, u)$ is an entire function of $T$.
In particular, $R(\alpha^{-1}, u)$ converges in $\mathcal{L}(E,E)$.
We write $R(T)$ for the power series $R(\alpha^{-1}, T)$.
For any eigenvector $\omega$ of $u$ with eigenvalue $\alpha$, the element $R(\alpha^{-1}, u) \omega = R(\alpha) \omega $ converges.
Hence, the power series $R(T)$ a least converges for $|T| \leq |\alpha| = p^{-a}$.

In other words, we may view $R(T)$ as an element in the Tate algebra $K\langle T/ p^{a} \rangle$, and
so are $w_s(T) = \Delta^s R(\alpha^{-1}, T)$ and the power series $G(T)$ that satisfies 
$G(u) = q =he^{h-1} f + \cdots +hef^{h-1} + f^h$.

As the polynomial $(T-\alpha)^h$ is regular, we can apply 
Weierstrass division on the Tate algebra $K\langle  T/ p^{a} \rangle $ and write
$$G(T) = (T-\alpha)^h S(T) +r(T)$$
where $S(T) \in K\langle  T/p^{a} \rangle$ and $r(T)$ is a polynomial of degree $<h$.
Since $G(u)$ is a projector to $N(\alpha^{-1})$, we must have $G(u)|_{N(\alpha^{-1})} = 1$. 
This implies $r(u) =1$ on $N(\alpha^{-1})$.
Since we assume $(T- \alpha)^h$ is the minimal polynomial on $N(\alpha^{-1})$, we must have $r(T)=1$.

Now, for every $m \in M$, we have 
\begin{align*}
    \langle \eta, e^{u=\alpha} m \rangle &= \langle \eta, G(u) m \rangle \\
    &= \langle G(\phi) \eta , m\rangle \\
    &= \langle [(\phi - \alpha)^h S(\phi) +1] \eta, m \rangle\\
    &= \langle \eta, m \rangle.
\end{align*}
Note that the expression $[(\phi - \alpha)^h S(\phi) +1]\eta$ converges.

The lemma then follows as the projector $e^{\leq a}$ is just the finite sum of all $e^{u = \gamma}$ with $\ord_p (\gamma) \leq a$.
\end{proof}

\printbibliography

@misc{katz-type,
    author = {Andreatta, Fabrizio and Iovita, Adrian},
    keywords = {Number Theory (math.NT), FOS: Mathematics, FOS: Mathematics, 11F67, 11F33, 11G15},
    title = {Katz type {$p$}-adic {$L$}-functions for primes {$p$} non-split in the CM field},
    publisher = {arXiv},
    year = {2019},
    copyright = {arXiv.org perpetual, non-exclusive license}
}

@article{tripleL,
    author = {Fabrizio Andreatta and Adrian Iovita},
    title = {Triple product {$p$}-adic {$L$}-functions associated to finite slope {$p$}-adic families of modular forms},
    volume = {170},
    journal = {Duke Mathematical Journal},
    number = {9},
    publisher = {Duke University Press},
    pages = {1989 -- 2083},
    % keywords = {de Rham cohomology, overconvergent modular forms, overconvergent modular sheaves, p-adic L-functions, vector bundles with marked sections},
    year = {2021},
    % doi = {10.1215/00127094-2020-0076},
    % URL = {https://doi.org/10.1215/00127094-2020-0076}
}

@article {Overconvergent_ES,
    AUTHOR = {Andreatta, Fabrizio and Iovita, Adrian and Stevens, Glenn},
     TITLE = {Overconvergent {E}ichler-{S}himura isomorphisms},
   JOURNAL = {J. Inst. Math. Jussieu},
  FJOURNAL = {Journal of the Institute of Mathematics of Jussieu. JIMJ.
              Journal de l'Institut de Math\'{e}matiques de Jussieu},
    VOLUME = {14},
      YEAR = {2015},
    NUMBER = {2},
     PAGES = {221--274},
}

@misc{p-adic_deformation,
    title = {{$p$}-adic deformations of arithmetic cohomology},
    author = {Ash, Avner and Stevens, Glenn},
    howpublished = {Preprint},
    year = {2008}
}

@book{familyofGalois,
    author = {Bella{\"i}che, Jo\"el and Chenevier, Ga\"etan},
    title = {Families of {Galois} representations and {Selmer} groups},
    series = {Ast\'erisque},
    publisher = {Soci\'et\'e math\'ematique de France},
    number = {324},
    year = {2009},
    zbl = {1192.11035},
    mrnumber = {2656025}
}

@article {Benois_Generalization_L-invariant,
    AUTHOR = {Benois, Denis},
     TITLE = {A generalization of {G}reenberg's {$\mathcal{L}$}-invariant},
   JOURNAL = {Amer. J. Math.},
  FJOURNAL = {American Journal of Mathematics},
    VOLUME = {133},
      YEAR = {2011},
    NUMBER = {6},
     PAGES = {1573--1632},
}

@article {Berger_representations,
    AUTHOR = {Berger, Laurent},
     TITLE = {Repr\'{e}sentations {$p$}-adiques et \'{e}quations
              diff\'{e}rentielles},
   JOURNAL = {Invent. Math.},
  FJOURNAL = {Inventiones Mathematicae},
    VOLUME = {148},
      YEAR = {2002},
    NUMBER = {2},
     PAGES = {219--284}
}

@incollection {Berger_BK_exp_formula,
    AUTHOR = {Berger, Laurent},
     TITLE = {Bloch and {K}ato's exponential map: three explicit formulas},
      NOTE = {Kazuya Kato's fiftieth birthday},
   JOURNAL = {Doc. Math.},
  FJOURNAL = {Documenta Mathematica},
      YEAR = {2003},
     PAGES = {99--129}
}

@article{BDP,
    author = {Massimo Bertolini and Henri Darmon and Kartik Prasanna},
    title = {{Generalized Heegner cycles and $p$-adic Rankin $L$-series}},
    volume = {162},
    journal = {Duke Mathematical Journal},
    number = {6},
    publisher = {Duke University Press},
    pages = {1033 -- 1148},
    year = {2013}
}

@incollection {reciprocity_balanced,
    author = {Bertolini, Massimo and Seveso, Marco Adamo and Venerucci,
              Rodolfo},
    title = {Reciprocity laws for balanced diagonal classes},
    journal = {Ast\'{e}risque},
    year = {2022},
    number = {434},
    pages = {77--174},
}

@article{synreg1,
    title={Syntomic regulators and {$p$}-adic integration I: Rigid syntomic regulators},
    author={Besser, Amnon},
    journal={Israel Journal of Mathematics},
    volume={120},
    number={},
    pages={201-334},
    year={2000}
}

@InProceedings{besser_Heidelberg,
    author = {Besser, Amnon},
    title = {Heidelberg Lectures on Coleman Integration},
    booktitle = {The Arithmetic of Fundamental Groups},
    publisher = {Springer Berlin Heidelberg}, 
    pages = {3-52},
    year = {2012},
}

@article{reg-formula,
    author = {Besser, Amnon},
    title = {On the regulator formulas of Bertolini, Darmon and Rotger},
    journal = {Research in the Mathematical Sciences},
    volume = {3},
    pages = {},
    year = {2016},
}

@inbook{buzzard_eigen, 
    place={Cambridge}, 
    series={London Mathematical Society Lecture Note Series}, 
    title={Eigenvarieties},
    booktitle={L-Functions and Galois Representations},
    publisher={Cambridge University Press},
    author={Buzzard, Kevin}, 
    editor={Burns, David and Buzzard, Kevin and Nekovář, JanEditors}, 
    year={2007}, 
    pages={59–120}, 
    collection={London Mathematical Society Lecture Note Series}
}

@article {Coleman_division,
    AUTHOR = {Coleman, Robert F.},
     TITLE = {Division values in local fields},
   JOURNAL = {Invent. Math.},
  FJOURNAL = {Inventiones Mathematicae},
    VOLUME = {53},
      YEAR = {1979},
    NUMBER = {2},
     PAGES = {91--116},
}

@article{pbanach,
    author = {Coleman, Robert F.},
    title = {{$p$}-adic Banach spaces and families of modular forms},
    journal = {Inventiones mathematicae},
    pages = {417-479},
    volume = {127},
    year = {1997}
}

@article{DR,
    title={Diagonal cycles and Euler systems I: A {$p$}-adic Gross-Zagier formula},
    author={Henri Darmon and Victor Rotger},
    journal={ Ann. Sci. Éc. Norm. Supér.},
    volume={47},
    number={4},
    pages={779-832},
    year={2014}
}

@article{DR2,
    title={Diagonal cycles and Euler systems II: The Birch and Swinnerton--Dyer conjecture for Hasse--Weil--Artin {$L$}-functions},
    author={Henri Darmon and Victor Rotger},
    journal={J. Amer. Math. Soc.},
    volume={30},
    number={3},
    pages={601-672},
    year={2017}
}

@article{central-crit,
    author = {Michael Harris and Stephen S. Kudla},
    journal = {Annals of Mathematics},
    number = {3},
    pages = {605--672},
    publisher = {Annals of Mathematics},
    title = {The central critical value of a triple product {$L$}-Function},
    volume = {133},
    year = {1991}
}

@misc{fp_coef, 
    author = {Huang, Ting-Han and Wu, Ju-Feng}, 
    title = {Finite polynomial cohomology with coefficients}, 
    year = {2022}, 
    howpublished = {Preprint available at: \url{https://arxiv.org/abs/2209.10289}}
}

@phdthesis{tripleL_and_GZ_formula, 
    title={Triple product $p$-adic $L$-functions for finite slope families of modular forms and a $p$-adic Gross--Zagier formula},
    author={Huang, Ting-Han}, 
    school={Concordia University},
    year={2024}
}

@article{trilinear,
    author = {Atsushi Ichino},
    title = {{Trilinear forms and the central values of triple product $L$-functions}},
    volume = {145},
    journal = {Duke Mathematical Journal},
    number = {2},
    publisher = {Duke University Press},
    pages = {281 -- 307},
    year = {2008},
}

@phdthesis{overconvegent_sheaves, 
    title={Overconvergent modular and de Rham sheaves},
    author={Kazi, Ananyo}, 
    school={University of Milan},
    year={2022}
}

@article {Kedlaya_cohomology_phiGamma,
    AUTHOR = {Kedlaya, Kiran S. and Pottharst, Jonathan and Xiao, Liang},
     TITLE = {Cohomology of arithmetic families of
              {$(\varphi,\Gamma)$}-modules},
   JOURNAL = {J. Amer. Math. Soc.},
  FJOURNAL = {Journal of the American Mathematical Society},
    VOLUME = {27},
      YEAR = {2014},
    NUMBER = {4},
     PAGES = {1043--1115}
}

@article{triangulation_families,
    title = {Triangulation of refined families},
    author = {Ruochuan Liu},
    journal = {Comment. Math. Helv.},
    volume = {90},
    number = {4},
    pages = {831–904},
    year = {2015},
}

@article {LZ-Iwasawa,
    AUTHOR = {Loeffler, David and Zerbes, Sarah Livia},
     TITLE = {Iwasawa theory and {$p$}-adic {$L$}-functions over
              {$\mathbb{Z}_p^2$}-extensions},
   JOURNAL = {Int. J. Number Theory},
  FJOURNAL = {International Journal of Number Theory},
    VOLUME = {10},
      YEAR = {2014},
    NUMBER = {8},
     PAGES = {2045--2095}
}

@article {LZ-Rankin_Eisenstein,
    AUTHOR = {Loeffler, David and Zerbes, Sarah Livia},
     TITLE = {Rankin-{E}isenstein classes in {C}oleman families},
   JOURNAL = {Res. Math. Sci.},
  FJOURNAL = {Research in the Mathematical Sciences},
    VOLUME = {3},
      YEAR = {2016},
     PAGES = {Paper No. 29, 53},
}

@article{Nakamura_2014,
    title={Iwasawa theory of de Rham $(\varphi , \Gamma )$-modules over the Robba ring}, 
    volume={13}, 
    number={1}, 
    journal={Journal of the Institute of Mathematics of Jussieu}, 
    author={Nakamura, Kentaro},
    year={2014}, 
    pages={65–118},
    % DOI={10.1017/S1474748013000078},
}

@article {Nakamura_classification,
    AUTHOR = {Nakamura, Kentaro},
     TITLE = {Classification of two-dimensional split trianguline
              representations of {$p$}-adic fields},
   JOURNAL = {Compos. Math.},
  FJOURNAL = {Compositio Mathematica},
    VOLUME = {145},
      YEAR = {2009},
    NUMBER = {4},
     PAGES = {865--914}
}

@inbook{p-AJmap, 
    series={GRM Proc. Lecture Notes}, 
    volume={24},
    title={{$p$}-adic Abel--Jacobi maps and {$p$}-adic heights},
    booktitle={The arithmetic and geometry of algebraic cycles},
    publisher={Amer. Math. Soc. Providence, RI},
    author={Nekovář, Jan}, 
    year={2000}, 
    pages={367–379}, 
}

@article {Ochiai,
    AUTHOR = {Ochiai, Tadashi},
     TITLE = {A generalization of the {C}oleman map for {H}ida deformations},
   JOURNAL = {Amer. J. Math.},
  FJOURNAL = {American Journal of Mathematics},
    VOLUME = {125},
      YEAR = {2003},
    NUMBER = {4},
     PAGES = {849--892},
}

@article {Perrin-Riou_Iwasawa,
    AUTHOR = {Perrin-Riou, Bernadette},
     TITLE = {Th\'{e}orie d'{I}wasawa des repr\'{e}sentations {$p$}-adiques
              sur un corps local},
      NOTE = {With an appendix by Jean-Marc Fontaine},
   JOURNAL = {Invent. Math.},
  FJOURNAL = {Inventiones Mathematicae},
    VOLUME = {115},
      YEAR = {1994},
    NUMBER = {1},
     PAGES = {81--161},
}

@article{endo,
    author = {Serre, Jean-Pierre},
    title = {Endomorphismes compl\`etement continus des espaces de Banach {$p$}-adiques},
    journal = {Publications Math\'ematiques de l'IH\'ES},
    pages = {69--85},
    publisher = {Institut des Hautes \'Etudes Scientifiques},
    volume = {12},
    year = {1962},
    zbl = {0104.33601},
    mrnumber = {26 #1733},
    language = {en},
}

@article{mf_deRham,
    author = {Scholl, Anthony J.},
    title = {Modular forms and de Rham cohomology; Atkin-Swinnerton-Dyer congruences},
    journal = {Inventiones mathematicae},
    pages = {49-77},
    volume = {79},
    year = {1985},
}

@article{nocforms,
    author={Urban, Eric},
    editor={Bouganis, Thanasis and Venjakob, Otmar},
    title={Nearly Overconvergent Modular Forms},
    booktitle={Iwasawa Theory 2012},
    year={2014},
    publisher={Springer Berlin Heidelberg},
    address={Berlin, Heidelberg},
    pages={401--441}
}

@phdthesis{rankin-triple, 
    title={Rankin triple products and quantum chaos},
    author={Watson, Thomas C.}, 
    school={Princeton University},
    year={2002}
}

@article{fil,
    ISSN = {0003486X},
    URL = {http://www.jstor.org/stable/2007084},
    author = {Jean-Pierre Wintenberger},
    journal = {Annals of Mathematics},
    number = {3},
    pages = {511-548},
    publisher = {Annals of Mathematics},
    title = {Un Scindage de la Filtration de Hodge pour Certaines Varietes Algebriques sur les Corps Locaux},
    volume = {119},
    year = {1984}
}

\end{document}